\newtheorem*{thm-plain}{Theorem}
\newtheorem{thm}{Theorem}[section]
\newtheorem{lem}[thm]{Lemma}
\newtheorem{prp}[thm]{Proposition}
\newtheorem{cor}[thm]{Corollary}
\newtheorem{conj}[thm]{Conjecture}
\numberwithin{equation}{thm}
\theoremstyle{definition}
\newtheorem{dfn}[thm]{Definition}
\newtheorem{prpdfn}[thm]{Proposition/Definition}
\newtheorem*{dfn-plain}{Definition}
\theoremstyle{remark}
\newtheorem{clm}[thm]{Claim}
\newtheorem{obs}[thm]{Observation}
\newtheorem{awlog}[thm]{Additional Assumption}
\newtheorem{ntn}[thm]{Notation}
\newtheorem{rem}[thm]{Remark}
\newtheorem{warn}[thm]{Warning}
\newtheorem{exm}[thm]{Example}
\newtheorem*{rem-plain}{Remark}
\DeclareMathOperator{\img}{im}
\DeclareMathOperator{\Pic}{Pic}
\DeclareMathOperator{\codim}{codim}
\DeclareMathOperator{\Tor}{tor}
\DeclareMathOperator{\Exc}{Exc}
\DeclareMathOperator{\res}{res}
\DeclareMathOperator{\restr}{restr}
\DeclareMathOperator{\mult}{mult}
\DeclareMathOperator{\discrep}{discrep}
\DeclareMathOperator{\coker}{coker}
\def\rd#1.{\lfloor{#1}\rfloor}
\def\rp#1.{\lceil{#1}\rceil}
\newcommand{\N}{\mathbb N}
\newcommand{\Q}{\ensuremath{\mathbb Q}}
\newcommand{\R}{\mathbb R}
\newcommand{\C}{\mathbb C}
\renewcommand{\P}{\mathbb P}
\renewcommand{\O}{\mathcal O}
\newcommand{\x}{\times}
\renewcommand{\phi}{\varphi}
\renewcommand{\theta}{\vartheta}
\newcommand{\id}{\mathrm{id}}
\newcommand{\minus}{\backslash}
\newcommand{\inj}{\hookrightarrow}
\newcommand{\isom}{\cong}
\newcommand{\mc}{\mathcal}
\newcommand{\ms}{\mathscr}
\newcommand{\tensor}{\otimes}
\newcommand{\Hnought}{\mathit\Gamma}
\newcommand{\Hom}{\mathrm{Hom}}
\newcommand{\sHom}{\mathscr H\!om}
\newcommand{\sg}{\mathrm{sg}}
\newcommand{\sm}{\mathrm{sm}}
\newcommand{\snc}{\mathrm{snc}}
\newcommand{\nsnc}{\mathrm{nsnc}}
\newcommand{\Sym}{\mathrm{Sym}}
\newcommand{\CC}{\ensuremath{\mathcal C}}
\newcommand{\Diff}{\mathrm{Diff}}
\newcommand{\sA}{\mathscr{A}}
\newcommand{\sB}{\mathscr{B}}
\newcommand{\sE}{\mathscr{E}}
\newcommand{\sF}{\mathscr{F}}
\newcommand{\cI}{\mathcal{I}}
\newcommand{\sL}{\mathscr{L}}
\newcommand{\sQ}{\mathscr{Q}}
\newcommand{\sT}{\mathscr{T}}
\DeclareMathOperator{\supp}{supp}
\newcommand{\dif}{\mathrm d}
\DeclareMathOperator{\rk}{rk}
\newenvironment{sequation}{%
\setcounter{equation}{\value{thm}}%
\numberwithin{equation}{section}%
\begin{equation}%
}{%
\end{equation}%
\numberwithin{equation}{thm}%
\addtocounter{thm}{1}%
}
\numberwithin{equation}{thm}
\DeclareRobustCommand{\SkipTocEntry}[5]{}
\newcommand{\iref}[3]{\the\value{#1}.\the\value{#2}(\the\value{#3})}
\definecolor{forrest}{RGB}{81,133,49}
\definecolor{mydarkblue}{RGB}{10,92,153}
\newcommand{\PreprintAndPublication}[2]{%
 \sideremark{%
   \begin{color}{mydarkblue}Preprint\end{color}/%
   \begin{color}{forrest}Publication\end{color}}%
   \begin{color}{mydarkblue}#1\end{color}%
   \begin{color}{forrest}#2\end{color}%
   \sideremark{End of
   \begin{color}{mydarkblue}Preprint\end{color}/%
   \begin{color}{forrest}Publication\end{color}}}
\renewcommand{\PreprintAndPublication}[2]{#1}
\begin{document}

\title[Bogomolov--Sommese vanishing on log canonical pairs]{Bogomolov--Sommese vanishing \\ on log canonical pairs}
\author{Patrick Graf}
\address{Patrick Graf, Mathematisches Institut, Albert-Ludwigs-Universit\"at Frei\-burg, Eckerstrasse 1, 79104 Freiburg im Breisgau, Germany}
\email{\href{mailto:patrick.graf@math.uni-freiburg.de}{patrick.graf@math.uni-freiburg.de}}
\date{\today}
\thanks{The author gratefully acknowledges partial support by the DFG-Forschergruppe 790 ``Classification of Algebraic Surfaces and Compact Complex Manifolds''.}
\keywords{Singularities of the minimal model program, differential forms, vanishing theorems}
\subjclass[2010]{Primary: 14F10; Secondary: 14F17, 14E30, 32S05, 32S20}

\begin{abstract}
Let $(X, D)$ be a projective log canonical pair. We show that for any natural number $p$, the sheaf $\bigl(\Omega_X^p(\log \rd D.)\bigr)^{**}$ of reflexive logarithmic $p$-forms does not contain a Weil divisorial subsheaf whose Kodaira--Iitaka dimension exceeds $p$. This generalizes a classical theorem of Bogomolov and Sommese.

In fact, we prove a more general version of this result which also deals with the \emph{orbifoldes g\'eom\'e\-triques} introduced by Campana.
The main ingredients to the proof are the extension theorem of Greb--Kebekus--Kov\'acs--Peternell, a new version of the Negativity lemma, the minimal model program, and a residue map for symmetric differentials on dlt pairs.

We also give an example showing that the statement cannot be generalized to spaces with Du Bois singularities.
As an application, we give a Kodaira--Akizuki--Nakano-type vanishing result for log canonical pairs which holds for reflexive as well as for K\"ahler differentials.
\end{abstract}

\maketitle

\tableofcontents

\section{Introduction and statement of main result}

\PreprintAndPublication{
\subsection{Motivation}

One way to study algebraic varieties is via positivity properties of vector bundles naturally attached to them. The following well-known result in this direction is the starting point of this paper.

\begin{thm}[Bogomolov--Sommese vanishing] \label{thm:BS van}
Let $X$ be a complex projective manifold and $D \subset X$ a divisor with simple normal crossings. For any invertible subsheaf $\ms L \subset \Omega_X^p(\log D)$, we have $\kappa(\ms L) \le p$, where $\kappa(\ms L)$ denotes the Kodaira--Iitaka dimension of $\ms L$. \qed
\end{thm}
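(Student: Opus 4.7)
The plan is to argue by contradiction via the Iitaka fibration, combined with induction on $\dim X$. Suppose $\kappa(\ms L) = q \ge p + 1$ and that the theorem is already known in dimensions smaller than $\dim X$. After a log resolution that simultaneously resolves the base loci of the linear systems $|\ms L^{\tensor m}|$ for $m \gg 0$, one obtains a birational morphism $\mu \colon X' \to X$ from a smooth projective $X'$ (with $D' := \mu^{-1}_*(D) + \Exc(\mu)$ an SNC divisor) together with a morphism $f \colon X' \to Y$ onto a smooth projective variety of dimension exactly $q$. By construction of the Iitaka fibration, there exist a big line bundle $A$ on $Y$ and an effective $\mu$-exceptional divisor $E$ on $X'$ with $\mu^* \ms L \sim_{\Q} f^* A + E$.

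Next, I would analyze the inclusion $\mu^* \ms L \hookrightarrow \Omega^p_{X'}(\log D')$ via the relative cotangent sequence
$$0 \to f^* \Omega^1_Y \to \Omega^1_{X'}(\log D') \to \Omega^1_{X'/Y}(\log D'_{\mr{vert}}) \to 0,$$
which induces a decreasing filtration on $\Omega^p_{X'}(\log D')$ whose graded pieces are $f^* \Omega^j_Y \tensor \Omega^{p-j}_{X'/Y}(\log D'_{\mr{vert}})$ for $j = 0, \ldots, p$. The key point is that the sections of $\mu^* \ms L^{\tensor m}$ for $m \gg 0$ are essentially pulled back from sections of $A^{\tensor m}$ on $Y$, so the symmetric $p$-forms they produce on $X'$ must be \emph{horizontal} with respect to $f$. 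I would aim to show that the embedding of $\mu^* \ms L$ therefore factors through the top graded piece $f^* \Omega^p_Y$ of the filtration.

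Once horizontality is established, pushforward and saturation on $Y$ produce a nonzero rank-one reflexive subsheaf $\ms M \hookrightarrow \Omega^p_Y$ with $\kappa(\ms M) \ge \kappa(A) = q > p$. This contradicts the inductive hypothesis applied to $Y$ (with empty boundary, as $Y$ is smooth), completing the argument. The base case $\dim X = 1$ is immediate: for $p = 0$, any subsheaf of $\O_X$ has Iitaka dimension at most $0$; for $p = 1$, we have $\kappa \le 1 = p$ trivially.

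The main obstacle is establishing the horizontality claim in the second step. Verifying that the vertical graded pieces of the filtration contribute nothing to the embedding requires a delicate argument, typically via a local computation at a general point of $X'$ combined with the positivity of $A$ --- for instance, by showing that a big line bundle cannot embed into a sheaf whose restriction to a general fiber has ``zero slope''. This is the technical core of the Bogomolov--Sommese argument, and the remainder of the reduction is essentially formal.
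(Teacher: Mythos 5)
The paper itself offers no proof of Theorem~\ref{thm:BS van}: it is quoted as a classical result, with references to Bogomolov (for $D=0$) and to Sommese and Esnault--Viehweg, whose arguments run through the closedness of global logarithmic forms and cyclic covers, respectively the $E_1$-degeneration of the logarithmic Hodge--de Rham spectral sequence --- not through the relative cotangent filtration of the Iitaka fibration. So your proposal must stand on its own, and it does not.

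The genuine gap sits exactly where you place ``the technical core'': the horizontality claim. The mechanism you offer for it --- ``a big line bundle cannot embed into a sheaf whose restriction to a general fiber has zero slope'' --- is not available, because after restricting to a general fiber $F$ of the Iitaka fibration the line bundle $\mu^*\sL\big|_F \sim_\Q E\big|_F$ has Kodaira--Iitaka dimension $0$ (this is the defining property of the Iitaka fibration); it is not big on $F$, so there is no slope or positivity obstruction to $\mu^*\sL$ landing in a graded piece $f^*\Omega_Y^j \tensor \Omega_{X'/Y}^{p-j}(\log)$ with $j<p$. Concretely: if $j$ is the largest index with $\mu^*\sL \subset F^j$, restricting the filtration to $F$ and invoking the inductive hypothesis on $F$ only yields $\kappa(\sL|_F) \le p-j$, i.e.\ $0 \le p-j$, which carries no information; nothing in your setup couples the horizontal positivity of $\sL$ (coming from $A$ on $Y$) to the index $j$, and I see no way to force $j=p$ short of proving the theorem itself. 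It is instructive that in the proof of the paper's main theorem the analogous filtration step (Proposition \ref{prp:rel diff C-pairs}) makes no such horizontality claim: it accepts whatever $q$ comes out and derives the contradiction from a completely different source, namely the bigness of $-G|_{F_i}$ supplied by the Negativity Lemma \ref{prp:negativity}. There are also secondary problems in your final descent step --- $f_*\mu^*\sL$ may well be zero, and passing to $\sL^{\tensor m}$ places you inside $\Sym^m f^*\Omega_Y^p$, outside the reach of the inductive hypothesis --- but those are repairable with saturation arguments; the horizontality step, as proposed, is not.
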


The case where $D = 0$ was proved by Bogomolov in his famous paper \cite{Bog79}. The general case is due to Sommese \cite{SS85}, \cite[Cor.~6.9]{EV92}.~--- This result is called a vanishing theorem because it can be rephrased as saying that
\[ H^0\big(X, \sL^{-1} \tensor \Omega_X^p(\log D)\big) = 0 \]
for any complex projective snc pair $(X, D)$ and any invertible sheaf $\sL \in \Pic(X)$ with $\kappa(\sL) > p$.

\subsection*{Importance}
Bogomolov used Theorem \ref{thm:BS van} to prove the inequality $c_1^2 \le 4c_2$ for smooth surfaces of general type \cite[Thm.~5]{Bog79}. This was improved by Miyaoka to $c_1^2 \le 3c_2$, which in turn is optimal \cite[Thm.~4]{Miy77}. Miyaoka's proof also uses Theorem \ref{thm:BS van}.
As another application of Theorem \ref{thm:BS van}, on a smooth projective surface $X$ with $\kappa(X) \ge 0$, Lu and Miyaoka \cite[Thm.~1]{LM95} as well as Bauer et al.~\cite[Thm.~2.6]{BHK+11} gave a lower bound for the self-intersection of a curve $C \subset X$ in terms of its geometric genus and the Chern numbers of $X$.

\subsection*{Generalizations}
First, note that although Bogomolov's original proof relied on taking hyperplane sections, his argument extends to the case of compact K\"ahler manifolds. Also in the K\"ahler setting, Mourougane \cite{Mou98} gave stronger bounds under additional positivity hypotheses on $\sL$.

Likewise, it is natural to consider generalizations of Bogomolov--Sommese vanishing to singular pairs. Note, however, that the statement is false if one allows arbitrary singularities. It turns out that the right setup to consider here is that of a log canonical pair $(X, D)$ together with its sheaves of reflexive differential forms\footnote{A log canonical pair $(X, D)$ consists of a normal variety $X$ and an effective \Q-Weil divisor $D$ on $X$. For the precise definition, see \cite[Sec.~2.3]{KM98}. The sheaf of reflexive differential $p$-forms is then defined to be the double dual of the $p$-th exterior power of the sheaf of logarithmic K\"ahler differentials $\Omega_X(\log \rd D.)$.} $\Omega_X^{[p]}(\log \rd D.)$.
Morally speaking, this is due to the fact that Theorem \ref{thm:BS van} follows from the closedness of global logarithmic forms on a projective snc pair \cite[Thm.~8.35(b)]{Voi02}, and this closedness still holds for reflexive forms on log canonical pairs by the extension theorem of Greb--Kebekus--Kov\'acs--Peternell \cite[Thm.~1.5]{GKKP11}.

Langer was the first to study the singular setting. In \cite[Cor.~4.4]{Lan01}, he proved a version for reduced log canonical surface pairs $(X, D)$, i.e.~pairs where $X$ is a surface and $D$ is a reduced divisor. In \cite[Thm.~4.11]{Lan03}, he claimed nearly everything we prove here. Unfortunately, his proof contains a serious gap which could not be filled -- see \cite[Rem.~1.3]{GKK10}.

Greb, Kebekus, Kov\'acs and Peternell \cite[Thm.~7.2]{GKKP11} were the first to give a generalization which works in arbitrary dimensions. Namely, they proved Bogomolov--Sommese vanishing for $\Q$-Cartier sheaves on log canonical pairs $(X, D)$. Here, a Weil divisorial sheaf $\ms A$ is called $\Q$-Cartier if $\ms A^{[n]}$, the double dual of the $n$-th tensor power, is invertible for some $n > 0$.
Kebekus and Kov\'acs \cite[Cor.~1.3]{KK10b} applied an earlier version of this result \cite[Thm.~8.3]{GKK10} to bounding the variation of families of canonically polarized manifolds in terms of the Kodaira dimension of the base. The interested reader is also referred to the survey \cite{Keb11}.

In the setting of Campana's theory of \emph{orbifoldes g\'eom\'etriques}, Jabbusch and Kebekus \cite[Prop.~7.1]{JK11} proved a version of Bogomolov--Sommese vanishing where the Kodaira dimension $\kappa(\sA)$ is replaced by a slightly larger quantity, the \emph{orbifold Kodaira dimension} $\kappa_\CC(\sA)$. In this theorem, $X$ is even required to be \Q-factorial, i.e.~\emph{any} Weil divisorial sheaf has to be \Q-Cartier. Jabbusch and Kebekus \cite[Thm.~1.5]{JK11} used this to prove a conjecture of Campana in dimension at most three, namely the isotriviality of families of canonically polarized manifolds over bases that are special in the sense of Campana.

The main shortcoming of the theorems \cite[Thm.~7.2]{GKKP11} and \cite[Prop. 7.1]{JK11} is the $\Q$-Cartier assumption, which is not very natural and often hard to work with. For example, in \cite{KK10b} Weil divisorial subsheaves of $\Omega_X^{[p]}(\log \rd D.)$ were constructed as highest exterior powers of destabilizing subsheaves of $\Omega_X^{[1]}(\log \rd D.)$. If $X$ is not $\Q$-factorial, there is no reason why such a sheaf should be $\Q$-Cartier. Since the property of being $\Q$-factorial is not stable under the commonly used operations of taking hyperplane sections, general fibers, and finite covers, this may lead to technical problems.
On the other hand, the assumption that $\sA$ be $\Q$-Cartier is crucial for the proofs of \cite[Thm.~7.2]{GKKP11} and \cite[Prop.~7.1]{JK11}, as they proceed by taking an index one cover with respect to $\sA$.

\subsection{Main result}

We show that the $\Q$-Cartier assumption is not necessary. This settles the question of how much of Bogomolov--Sommese vanishing still holds in the log canonical setting.
The most general version of the result, formulated in the following Theorem \ref{thm:lc BS van}, uses Campana's language of \emph{orbifoldes g\'eom\'etriques}, also called \CC-pairs.
The relevant definitions from Campana's paper \cite{Cam04} are recalled in Section \ref{sec:C-pairs}.

\begin{thm}[Bogomolov--Sommese vanishing on lc \CC-pairs] \label{thm:lc BS van}
Let $(X, D')$ be a complex projective log canonical pair, and let $D \le D'$ be a divisor such that $(X, D)$ is a \CC-pair. If $\sA \subset \Sym_\CC^{[1]} \Omega_X^p(\log D)$ is a Weil divisorial subsheaf, then $\kappa_\CC(\ms A) \le p$.
\end{thm}

The following statement about sheaves of differentials on log canonical pairs is an immediate consequence.

\begin{cor}[Bogomolov--Sommese vanishing on log canonical pairs] \label{cor:lc BS van}
Let $(X, D)$ be a complex projective log canonical pair. If $\ms A \subset \Omega_X^{[p]}(\log \rd D.)$ is a Weil divisorial subsheaf, then $\kappa(\ms A) \le p$. \qed
\end{cor}

In the statement, the Kodaira--Iitaka dimension of a Weil divisorial sheaf appears. This notion is a natural extension of the Kodaira--Iitaka dimension for line bundles. For the reader's convenience, we recall its definition here.

\begin{dfn}[Kodaira--Iitaka dimension, see {\cite[Def.~2.18]{GKKP11}}] \label{dfn:kappa}
Let $X$ be a normal projective variety and $\sA$ a Weil divisorial sheaf on $X$.  If $h^0(X,\, \sA^{[n]}) = 0$ for all $n \in \N$, we say that $\sA$ has \emph{Kodaira--Iitaka dimension} $\kappa(\sA) := -\infty$.  Otherwise, set
\[ M := \bigl\{ n\in \mathbb N \;\big|\; h^0(X,\, \sA^{[n]}) > 0 \bigr\}, \]
note that the restriction of $\sA$ to the smooth locus of $X$ is locally
free by \cite[Lemma 1.1.15]{OSS80}, and consider the natural rational mappings
\[ \phi_n : X \dasharrow \mathbb P\bigl(H^0(X,\, \sA^{[n]})^*\bigr) \quad \text{ for each } n \in M. \]
The Kodaira--Iitaka dimension of $\sA$ is then defined as
\[ \kappa(\sA) := \max_{n \in M} \bigl\{ \dim \overline{\phi_n(X)} \bigr\}. \]
We say that $\sA$ is \emph{big} if $\kappa(\sA) = \dim X$.
\end{dfn}

\subsection{Outline of the proof} \label{subsec:outline of prf}

Since the proof of Theorem \ref{thm:lc BS van} is rather technical, we would like to highlight the main points by considering a simple case first.

\subsubsection{Proof in a simple setting} \label{subsubsec:simple proof}

Let $X$ be a projective cone over an elliptic curve and $D = D' = 0$. Blowing up the vertex gives a log resolution $f\!: Z \to X$, whose exceptional divisor $E$ is isomorphic to the elliptic curve we started with. An elementary calculation shows that $K_X$ is $\Q$-Cartier, and that the discrepancy $a(E, X, 0) = -1$. So $(X, \emptyset)$ is log canonical, but not dlt.
Now let $\ms A \subset \Omega_X^{[1]}$ be an arbitrary Weil divisorial subsheaf. We want to show that $\kappa(\ms A) \le 1$. 

\begin{ntn}[see Section \ref{subsec:refl sheaves}]
Define the \emph{reflexive pullback} $f^{[*]} \ms A$ to be $(f^* \ms A)^{**}$ and the \emph{reflexive tensor powers} $\ms A^{[k]} := (\ms A^{\tensor k})^{**}$, where $^{**}$ denotes the double dual.
\end{ntn}

The main result of \cite{GKKP11} allows us to regard sections in $f^{[*]} \ms A$ as differential forms on $Z$. To be more precise, recall the following theorem.
\begin{thm}[{Extension Theorem, see \cite[Thm.~4.3]{GKKP11}}]
There is a morphism
\[ f^* \Omega_X^{[1]} \to \Omega_Z^1(\log E) \]
which over the smooth locus of $X$ coincides with the usual pullback map of differential forms.
This gives rise to an embedding $f^{[*]} \ms A \inj \Omega_Z^1(\log E)$. \qed
\end{thm}

If $\ms A$ is invertible, then $\kappa(\ms A) = \kappa(f^{[*]} \ms A)$ and we are done by Bogomolov--Sommese vanishing for the snc pair $(Z, E)$. However, it is well-known that taking reflexive powers in general does not commute with reflexive pullback, cf.~\cite{HK04}. There always is an inclusion $(f^{[*]} \ms A)^{[k]} \subset f^{[*]}(\ms A^{[k]})$, but it might be strict. Thus we only have the inequality $\kappa(\ms A) \ge \kappa(f^{[*]} \ms A)$, which is of no use in this situation.\footnote{For a simple example where $\kappa(\ms A) > \kappa(f^{[*]} \ms A)$, take $X$ to be the quadric cone in $\P^3$, $\sA$ the Weil divisorial sheaf associated to a ruling of $X$, and $f$ the blowup of the vertex.}

To remedy this problem, we will enlarge the sheaf $f^{[*]} \ms A$ by taking its \emph{saturation} in $\Omega_Z^1(\log E)$. This is a line bundle $\ms B \subset \Omega_Z^1(\log E)$. For its definition, let $\mc Q$ denote the quotient sheaf $\Omega_Z^1(\log E)/f^{[*]} \ms A$. Then $\ms B$ is defined as the kernel of the composed map
\[ \Omega_Z^1(\log E) \to \mc Q \to \mc Q \big/ \!\Tor \mc Q. \]

If we can show that $\kappa(\ms A) \le \kappa(\ms B)$, then we are done, since $\kappa(\ms B) \le p$. So consider a section $\sigma \in H^0(X, \ms A^{[k]})$. We want to show that $f^* \sigma \in H^0(Z, \ms B^{[k]})$. Since $f^{[*]}(\ms A^{[k]})$ is contained in $\ms B^{[k]}$ away from $E$, we may regard $f^* \sigma \in H^0\big(Z, f^{[*]}(\ms A^{[k]})\big)$ as a rational section of $\ms B^{\tensor k}$, possibly with a pole along $E$. We want to show that in fact there is no such pole. Arguing by contradiction, we assume that $f^* \sigma$ does have a pole, say of order $n > 0$. Then
\[ f^* \sigma \in H^0\big(Z, \ms B^{\tensor k} \tensor \mc O_Z(nE)\big). \]
This induces inclusions of sheaves $\mc O_Z \inj \ms B^{\tensor k}(nE)$ and, twisting and restricting,
\[ \mc O_E(-nE) \inj \ms B^{\tensor k}\big|_E = \big( \ms B\big|_E \big)^{\tensor k}. \]
The restricted map stays injective because $n$ is exactly the pole order of $f^* \sigma$ and no bigger.

The line bundle on the left-hand side has degree $-n \cdot E^2 > 0$, hence is ample, and so is $\ms B\big|_E$. On the other hand, since $\ms B \subset \Omega_Z^1(\log E)$ is saturated by definition, restricting this inclusion to $E$ gives
\[ \ms B\big|_E \subset \Omega_Z^1(\log E)\big|_E, \]
and the vector bundle on the right-hand side sits in the residue sequence
\[ 0 \to \Omega_E^1 \to \Omega_Z^1(\log E)\big|_E \to \mc O_E \to 0. \]
So the ample line bundle $\ms B\big|_E$ injects into either $\Omega_E^1$ or $\mc O_E$. However, since $E$ is an elliptic curve, this is impossible. This is the required contradiction.

\subsubsection{Proof in the general case}

For the general proof, the argument outlined above has to be modified and extended. Several issues arise.
\setitemize[1]{leftmargin=1.7em,parsep=0em,itemsep=0.125em,topsep=0.125em}
\begin{itemize}
\item[---] The philosophy behind the fact that $\mc O_E(-nE)$ is ample is of course that a nonzero effective exceptional divisor $E$ is negative in a certain sense. The usual lemma to this end \cite[Lem.~3.6.2]{BCHM10} says that some component of $E$ is covered by curves intersecting $E$ negatively. This is however not enough for our purposes. We prove the stronger statement that $-E$ is big when restricted to one of the components of $E$, see Proposition \ref{prp:negativity intro}.

\item[---] In general, one cannot assume that $X$ has a log resolution with just a single exceptional divisor, whose discrepancy is exactly $-1$. It turns out that the problem is not the number of exceptional divisors, but the fact that some of them might have discrepancy $> -1$.

Therefore we will pass to a \emph{minimal dlt model} of $X$. This is a ``log crepant'' partial resolution $Z \to X$, i.e.~it only extracts divisors of discrepancy $-1$. The existence of minimal dlt models is due to \cite{BCHM10}. However, although $Z$ is $\Q$-factorial and dlt, things are more complicated than if $Z$ were smooth. For example, the equality $\big( \sB^{[k]}\big|_E \big)^{**} = ( \ms B|_E^{**} )^{[k]}$ does not hold anymore.

We need an analysis of the codimension-two structure of dlt pairs along the reduced boundary, close in spirit to \cite[Sec.~9]{GKKP11} and showing that basically we are dealing here with finite quotient singularities. This allows us to apply Campana's theory of \emph{orbifoldes g\'eo\-m\'e\-triques}, called \emph{\CC-pairs} by Jabbusch and Kebekus. So \CC-pairs appear not only in the statement of the main result, but also as a technical tool in its proof.

\item[---] Generalizing the last step of the simple proof given in \ref{subsubsec:simple proof}, we need to know that for a klt variety $E$ with numerically trivial canonical divisor, $\Omega_E^{[p]}$ does not contain a big subsheaf for any $p$. Since $\dim E = \dim X - 1$, this suggests an approach by induction on the dimension of $X$. However, this is not exactly what we will do. Rather, we will show that we may assume $E$ to be $\Q$-factorial, and then we will apply Bogomolov--Sommese vanishing for $\Q$-factorial varieties \cite[Thm.~7.3]{GKKP11} to $E$.
\end{itemize}
\setitemize[1]{leftmargin=*,parsep=0em,itemsep=0.125em,topsep=0.125em}

\subsection{Further results}

In the course of the proof, we show the following generalization of the Negativity lemma, which we feel might be of independent interest.

\begin{prp}[Negativity lemma for bigness, see Proposition \ref{prp:negativity}] \label{prp:negativity intro}
Let $\pi\!: Y \to X$ be a proper birational morphism between normal quasi-projective varieties. Then for any nonzero effective $\pi$-exceptional $\Q$-Cartier divisor $E$, there is a component $E_0 \subset E$ such that $-E|_{E_0}$ is $\pi|_{E_0}$-big.
\end{prp}

Furthermore, we prove that a residue map exists for symmetric differential forms on dlt pairs. \cite[Thm.~11.7]{GKKP11} essentially is the special case $k = 1$ of this. Note that we do not use \cite[Thm.~11.7]{GKKP11} in the proof.

\begin{thm}[Residues of symmetric differentials, see Theorem \ref{thm:res of symm diff}] \label{thm:res of symm diff intro}
Let $(X, D)$ be a dlt \CC-pair and $D_0 \subset \rd D.$ a component of the reduced boundary. Set $D_0^c := \Diff_{D_0}(D - D_0)$. Then the pair $(D_0, D_0^c)$ is also a dlt \CC-pair, and for any integer $p \ge 1$, there is a map
\[ \res_{D_0}^k\!: \Sym_\CC^{[k]} \Omega_X^p(\log D) \to \Sym_\CC^{[k]} \Omega_{D_0}^{p-1}(\log D_0^c) \]
which on the snc locus of $(X, \rp D.)$ coincides with the $k$-th symmetric power of the usual residue map for snc pairs.
\end{thm}
In Theorem \ref{thm:res of symm diff intro}, the divisor $\Diff_{D_0}(D - D_0)$ is the \emph{different divisor}, whose definition is recalled in Proposition/Definition \ref{prpdfn:different}.
The symbol $\Sym_\CC^{[k]} \Omega_X^p(\log D)$ denotes the sheaf of orbifold pluri-differentials. Its definition is recalled in Definition \ref{dfn:C-diff}.

\subsection{Applications of Theorem \ref{thm:lc BS van}}

We prove the following corollary, which is related to a conjecture of Campana -- see Section \ref{sec:rem on kappa++}.

\begin{cor}[see Corollary \ref{cor:easy lc kappa++}] \label{cor:easy lc kappa++ intro}
Let $(X, D)$ be a complex projective log canonical pair. Set
\begin{align*}
\kappa_{++}(X, D) := \max \bigl\{ \kappa(\sA) \;\big|\; \sA \text{ a Weil divisorial subsheaf of $\Omega_X^{[p]}(\log \rd D.)$, \phantom{.}} & \\
\text{for some $p > 0$} \bigr\}. &
\end{align*}
If $\kappa_{++}(X, D) = \dim X$, then $(X, D)$ is of log general type.
\end{cor}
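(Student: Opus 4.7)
The plan is to derive the corollary directly from Corollary \ref{cor:lc BS van} by forcing the exterior degree $p$ witnessing the large Kodaira--Iitaka dimension to attain its maximal possible value $p = n := \dim X$, and then identifying the resulting top-degree reflexive logarithmic forms with the log-canonical sheaf $\O_X(K_X + \rd D.)$.

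First I would unpack the hypothesis: there exist an integer $p > 0$ and a Weil divisorial subsheaf $\sA \subset \Omega_X^{[p]}(\log \rd D.)$ with $\kappa(\sA) = n$. Since $\wedge^p \Omega_X(\log \rd D.)$ vanishes on the big open subset where $X$ is smooth and $\rd D.$ is snc whenever $p > n$, reflexivity forces $\Omega_X^{[p]}(\log \rd D.) = 0$ for $p > n$, so $p \le n$. On the other hand, Corollary \ref{cor:lc BS van} yields $p \ge \kappa(\sA) = n$, hence $p = n$.

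Next I would identify $\Omega_X^{[n]}(\log \rd D.) \cong \O_X(K_X + \rd D.)$: on the big open locus $U \subset X$ where $X$ is smooth and $\rd D.$ is snc, this is the classical isomorphism $\Omega_U^n(\log \rd D.|_U) \cong \omega_U(\rd D.|_U)$, which extends to all of $X$ by reflexivity. Consequently $\sA$ is a rank-one reflexive subsheaf of $\O_X(K_X + \rd D.)$, so $\sA \cong \O_X(A)$ for some Weil divisor $A$ with $A \le K_X + \rd D. \le K_X + D$. Since $(X,D)$ is log canonical, $K_X + D$ is $\Q$-Cartier; for sufficiently divisible $m$ the chain of inclusions
\[
\O_X(mA) \hookrightarrow \O_X\bigl(m(K_X + \rd D.)\bigr) \hookrightarrow \O_X\bigl(m(K_X + D)\bigr)
\]
gives $h^0\bigl(\O_X(mA)\bigr) \le h^0\bigl(\O_X(m(K_X+D))\bigr)$ for all such $m$, so $\kappa(X, K_X + D) \ge \kappa(X, A) = \kappa(\sA) = n$. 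Hence $K_X + D$ is big, i.e.\ $(X,D)$ is of log general type.

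The only nontrivial input is Corollary \ref{cor:lc BS van}; the rest is formal bookkeeping with reflexive sheaves and Weil divisors on a normal variety, and I expect no serious obstacle here.
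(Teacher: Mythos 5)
Your argument is correct and is exactly the one the paper intends: the paper derives Corollary \ref{cor:easy lc kappa++} from Theorem \ref{thm:lc BS van} ``by the same argument'' as in the classical smooth case, namely forcing $p = \dim X$ via Bogomolov--Sommese vanishing and identifying $\Omega_X^{[n]}(\log \rd D.)$ with $\O_X(K_X + \rd D.)$. Your write-up fills in precisely these steps, so there is nothing to add.
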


Using Serre duality, we prove a Kodai\-ra--Akizuki--Nakano-type vanishing result for the top cohomology groups of certain sheaves of twisted reflexive differentials. Due to dimension reasons, this also holds for K\"ahler differentials.

\begin{cor}[KAN-type vanishing, see Corollaries \ref{cor:Serre dual lc BS van} and \ref{cor:Serre dual lc BS van II}] \label{cor:Serre dual lc BS van intro}
Let $(X, D)$ be a complex projective log canonical pair of dimension $n$ and $\sA$ a Weil divisorial sheaf on $X$. Then
\[ \begin{array}{llll}
H^n \bigl( X, \bigl( \Omega_X^{[p]}(\log \rd D.) \tensor \sA \bigr)^{**} \bigr) & \!\! = & \!\! 0 & \text{and} \\[1ex]
H^n \bigl( X, \Omega_X^p(\log \rd D.) \tensor \sA \bigr)                        & \!\! = & \!\! 0 & \\
\end{array} \]
for all $p > n - \kappa(\sA)$.
\end{cor}
}{
\subsection{Motivation}

One way to study algebraic varieties is via positivity properties of vector bundles naturally attached to them. The following well-known result in this direction is the starting point of this paper.

\begin{thm}[Bogomolov--Sommese vanishing] \label{thm:BS van}
Let $X$ be a complex projective manifold and $D \subset X$ a divisor with simple normal crossings. For any invertible subsheaf $\sL \subset \Omega_X^p(\log D)$, we have $\kappa(\sL) \le p$, where $\kappa(\sL)$ denotes the Kodaira--Iitaka dimension of $\sL$. \qed
\end{thm}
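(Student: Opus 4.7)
The plan is to argue by contradiction: assume $\kappa(\sL) \geq p+1$, and derive a contradiction using a cyclic-cover construction together with the Hodge-theoretic closedness of global logarithmic forms. Since $\kappa(\sL) \geq p+1$, for some $N \gg 0$ the linear system $|\sL^{\otimes N}|$ defines a rational map $X \dashrightarrow \P^M$ whose image has dimension at least $p+1$. First I would pick a general section $s \in H^0(X, \sL^{\otimes N})$ with zero divisor $B$, blow up if necessary so that $B \cup D$ is snc, and then form the degree-$N$ cyclic cover $\pi \colon Y \to X$ obtained by extracting an $N$-th root of $s$. Then $Y$ is smooth, $\pi^* \sL \isom \O_Y(B')$ with $B' := (\pi^* B)_{\red}$, and $\pi^* \sL$ carries a tautological section vanishing only along $B'$.

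Pulling back the inclusion $\sL \inj \Omega_X^p(\log D)$ yields $\pi^* \sL \inj \Omega_Y^p(\log \tilde D)$ with $\tilde D := \pi^{-1}(D)_{\red} \cup B'$, and the tautological section produces a nonzero global logarithmic $p$-form $\omega \in H^0(Y, \Omega_Y^p(\log \tilde D))$. By Deligne's Hodge theory for projective snc pairs, every such global log-form is $d$-closed. This is the crucial input that transforms the question from a statement about line bundles into a geometric constraint on an integrable distribution on $Y$.

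The last step uses a rank/dimension count. At a general point $y \in Y$ the form $\omega$ has pointwise rank at most $p$, so its annihilator $K_y := \{v \in T_y Y : \iota_v \omega = 0\}$ has codimension at most $p$, and closedness $d\omega = 0$ makes $K$ integrable. On the other hand, $\pi^* \sL^{\otimes N}$ has Iitaka dimension at least $p+1$, and by construction $\omega$ encodes the positivity of $\sL$ along the associated Iitaka fibration. Combining these should force $\omega$ to have pointwise rank at least $p+1$ generically, contradicting $\omega \in \Omega_Y^p$.

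The hard part is this final rank argument, which is where the actual geometric content lives. Concretely, one must track the eigensheaf decomposition $\pi_* \Omega_Y^p(\log \tilde D) = \bigoplus_{i=0}^{N-1} \Omega_X^p(\log(D + B)) \tensor \sL^{-i}$ under the Galois action, identify $\omega$ with the component in a specific eigensheaf, and use the bigness of $\sL$ on the Iitaka image to force transversality to the Iitaka fibres. As an alternative, one can bypass this geometric step by invoking a log Kodaira-type vanishing (Esnault--Viehweg) applied to the cyclic cover, which via Serre duality rules out the inclusion $\sL \inj \Omega_X^p(\log D)$ when $\kappa(\sL) > p$ directly.
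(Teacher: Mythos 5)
First, a point of reference: the paper does not prove Theorem~\ref{thm:BS van} at all --- it is the classical Bogomolov--Sommese theorem, stated with a \qed and citations to Bogomolov and to Esnault--Viehweg --- so there is no internal proof to compare against and your attempt must stand on its own. Your overall strategy (cyclic cover, tautological section, closedness of global logarithmic forms on a projective snc pair, then a rank count against the Iitaka map) is indeed the classical route, and it matches the philosophy the paper alludes to in its introduction. But as written it has a genuine gap exactly where you concede ``the hard part \dots is where the actual geometric content lives.''

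The gap is that you never extract a usable identity from the closedness of $\sigma\cdot\pi^*\omega$. The classical argument proceeds as follows. Write the inclusion as $\omega \in H^0\bigl(X, \Omega_X^p(\log D)\tensor\sL^{-1}\bigr)$. Taking the $N$-th root cover of a section $s_i \in H^0(X,\sL^{\tensor N})$ and trivializing $\sL$ locally, closedness of $\sigma_i\,\pi^*\omega$ gives $N\,d\omega = -\frac{ds_i}{s_i}\wedge\omega$; comparing this identity for two sections $s_i, s_j$ yields $\frac{d(s_i/s_j)}{s_i/s_j}\wedge\omega = 0$, i.e.\ $df\wedge\omega = 0$ for every ratio $f$ of sections of $\sL^{\tensor N}$ (and this difference is independent of the trivialization, since $s_i/s_j$ is a genuine rational function). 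If $\kappa(\sL) = k$, one chooses $k$ such ratios $f_1,\dots,f_k$ with $df_1\wedge\cdots\wedge df_k \neq 0$ at a general point; the multilinear-algebra fact that a nonzero $p$-covector annihilated under wedge product by $k$ linearly independent $1$-covectors is divisible by their product then forces $p \ge k$. None of this appears in your write-up: ``combining these should force $\omega$ to have pointwise rank at least $p+1$'' is precisely the assertion that needs proof. Moreover, your preparatory claim is backwards: the annihilator $K_y = \{v : \iota_v\omega = 0\}$ of a nonzero $p$-form has codimension at \emph{least} $p$ (with equality if and only if $\omega$ is decomposable), not at most $p$, and the integrability of $K$ plays no role in the actual contradiction. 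Your fallback of invoking Esnault--Viehweg's vanishing theorem is legitimate, but it then amounts to the very citation the paper itself gives rather than a proof.
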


The case where $D = 0$ was proved by Bogomolov in his famous paper \cite{Bog79}. The general case is due to Sommese \cite[Cor.~6.9]{EV92}.
Bogomolov used Theorem \ref{thm:BS van} to prove the inequality $c_1^2 \le 4c_2$ for smooth surfaces of general type \cite[Thm.~5]{Bog79}. This was improved by Miyaoka to $c_1^2 \le 3c_2$, which in turn is optimal \cite[Thm.~4]{Miy77}. Miyaoka's proof also uses Theorem \ref{thm:BS van}.

\subsection*{Generalizations}
It is natural to consider generalizations of Bogomolov--Sommese vanishing to singular pairs. Note, however, that the statement is false if one allows arbitrary singularities. It turns out that the right setup to consider here is that of a log canonical pair $(X, D)$ together with its sheaves of reflexive differential forms\footnote{A log canonical pair $(X, D)$ consists of a normal variety $X$ and an effective \Q-Weil divisor $D$ on $X$. For the precise definition, see \cite[Sec.~2.3]{KM98}. The sheaf of reflexive differential $p$-forms is then defined to be the double dual of the $p$-th exterior power of the sheaf of logarithmic K\"ahler differentials $\Omega_X(\log \rd D.)$.} $\Omega_X^{[p]}(\log \rd D.)$.
Morally speaking, this is due to the fact that Theorem \ref{thm:BS van} follows from the closedness of global logarithmic forms on a projective snc pair \cite[Thm.~8.35(b)]{Voi02}, and this closedness still holds for reflexive forms on log canonical pairs by the extension theorem of Greb--Kebekus--Kov\'acs--Peternell \cite[Thm.~1.5]{GKKP11}.

Greb, Kebekus, Kov\'acs and Peternell \cite[Thm.~7.2]{GKKP11} were the first to give a generalization which works in arbitrary dimensions. Namely, they proved Bogomolov--Sommese vanishing for $\Q$-Cartier sheaves on log canonical pairs $(X, D)$. Here, a Weil divisorial sheaf $\ms A$ is called $\Q$-Cartier if $\ms A^{[n]}$, the double dual of the $n$-th tensor power, is invertible for some $n > 0$.
Kebekus and Kov\'acs \cite[Cor.~1.3]{KK10b} applied an earlier version of this result \cite[Thm.~8.3]{GKK10} to bounding the variation of families of canonically polarized manifolds in terms of the Kodaira dimension of the base. The interested reader is also referred to the survey \cite{Keb11}.

The main shortcoming of the theorem \cite[Thm.~7.2]{GKKP11} is the \Q-Cartier assumption, which is not very natural and often hard to work with. For example, in \cite{KK10b} Weil divisorial subsheaves of $\Omega_X^{[p]}(\log \rd D.)$ were constructed as highest exterior powers of destabilizing subsheaves of $\Omega_X^{[1]}(\log \rd D.)$. If $X$ is not $\Q$-factorial, there is no reason why such a sheaf should be $\Q$-Cartier. Since the property of being $\Q$-factorial is not stable under the commonly used operations of taking hyperplane sections, general fibers, and finite covers, this may lead to technical problems.
On the other hand, the assumption that $\sA$ be $\Q$-Cartier is crucial for the proof of \cite[Thm.~7.2]{GKKP11}, as it proceeds by taking an index one cover with respect to~$\sA$.

\subsection{Main result}

We show that the $\Q$-Cartier assumption is not necessary. This settles the question of how much of Bogomolov--Sommese vanishing still holds in the log canonical setting.
The most general version of the result, formulated in the following Theorem \ref{thm:lc BS van}, uses Campana's language of \emph{orbifoldes g\'eom\'etriques}, also called \CC-pairs.
The relevant definitions from Campana's paper \cite{Cam04} are recalled in Section \ref{sec:C-pairs}.

\begin{thm}[Bogomolov--Sommese vanishing on lc \CC-pairs] \label{thm:lc BS van}
Let $(X, D')$ be a complex projective log canonical pair, and let $D \le D'$ be a divisor such that $(X, D)$ is a \CC-pair. If $\sA \subset \Sym_\CC^{[1]} \Omega_X^p(\log D)$ is a Weil divisorial subsheaf, then $\kappa_\CC(\ms A) \le p$.
\end{thm}

The following statement about sheaves of differentials on log canonical pairs is an immediate consequence.

\begin{cor}[Bogomolov--Sommese vanishing on log canonical pairs] \label{cor:lc BS van}
Let $(X, D)$ be a complex projective log canonical pair. If $\ms A \subset \Omega_X^{[p]}(\log \rd D.)$ is a Weil divisorial subsheaf, then $\kappa(\ms A) \le p$. \qed
\end{cor}

In the statement, the Kodaira--Iitaka dimension of a Weil divisorial sheaf appears. This notion is a natural extension of the Kodaira--Iitaka dimension for line bundles. For the reader's convenience, we recall its definition here.

\begin{dfn}[Kodaira--Iitaka dimension, see {\cite[Def.~2.18]{GKKP11}}] \label{dfn:kappa}
Let $X$ be a normal projective variety and $\sA$ a Weil divisorial sheaf on $X$.  If $h^0(X,\, \sA^{[n]}) = 0$ for all $n \in \N$, we say that $\sA$ has \emph{Kodaira--Iitaka dimension} $\kappa(\sA) := -\infty$.  Otherwise, set
\[ M := \bigl\{ n\in \mathbb N \;\big|\; h^0(X,\, \sA^{[n]}) > 0 \bigr\}, \]
note that the restriction of $\sA$ to the smooth locus of $X$ is locally
free by \cite[Lemma 1.1.15]{OSS80}, and consider the natural rational mappings
\[ \phi_n : X \dasharrow \mathbb P\bigl(H^0(X,\, \sA^{[n]})^*\bigr) \quad \text{ for each } n \in M. \]
The Kodaira--Iitaka dimension of $\sA$ is then defined as
\[ \kappa(\sA) := \max_{n \in M} \bigl\{ \dim \overline{\phi_n(X)} \bigr\}. \]
We say that $\sA$ is \emph{big} if $\kappa(\sA) = \dim X$.
\end{dfn}

\subsection{Outline of the proof} \label{subsec:outline of prf}

The main idea for the proof of Theorem \ref{thm:BS van} is best explained in a simple case. Let $X$ be a projective cone over an elliptic curve and $D = D' = 0$. Blowing up the vertex gives a log resolution $f\!: Z \to X$, whose exceptional divisor $E$ is isomorphic to the elliptic curve we started with. An elementary calculation shows that $K_X$ is $\Q$-Cartier, and that the discrepancy $a(E, X, 0) = -1$. So $(X, \emptyset)$ is log canonical, but not dlt.
Now let $\ms A \subset \Omega_X^{[1]}$ be an arbitrary Weil divisorial subsheaf. We want to show that $\kappa(\ms A) \le 1$. 

The main result of \cite{GKKP11} allows us to regard sections in $f^{[*]} \sA := (f^* \sA)^{**}$ as differential forms on $Z$. To be more precise, recall the following theorem.
\begin{thm}[{Extension Theorem, see \cite[Thm.~4.3]{GKKP11}}]
There is a morphism
\[ f^* \Omega_X^{[1]} \to \Omega_Z^1(\log E) \]
which over the smooth locus of $X$ coincides with the usual pullback map of differential forms.
This gives rise to an embedding $f^{[*]} \ms A \inj \Omega_Z^1(\log E)$. \qed
\end{thm}

Hence $\kappa(f^{[*]} \sA) \le 1$ by Theorem \ref{thm:BS van}. However, since reflexive pullback does not commute with reflexive tensor powers, in this situation in general we only have the inequality $\kappa(f^{[*]} \sA) \le \kappa(\sA)$.\footnote{For a simple example where $\kappa(\ms A) > \kappa(f^{[*]} \ms A)$, take $X$ to be the quadric cone in $\P^3$, $\sA$ the Weil divisorial sheaf associated to a ruling of $X$, and $f$ the blowup of the vertex.}
Therefore we enlarge the sheaf $f^{[*]} \sA$ by taking its saturation $\sB \subset \Omega_Z^1(\log E)$. We prove that sections of $\sA^{[k]} := (\sA^{\otimes k})^{**}$ extend to sections of $\sB^{[k]}$. Then $\kappa(\sA) = \kappa(\sB)$, and we are done.

The proof is by contradiction: Assuming that some section of $\sA^{[k]}$ acquires a pole when being pulled back, we use the fact that $E \subset Z$ has negative self-intersection to deduce that $\sB|_E$ is ample. On the other hand, the residue sequence for the pair $(Z, E)$ shows that $\sB|_E$ injects into the trivial line bundle. This yields the desired contradiction.

For the general proof, the fact that contractible curves have negative self-inter\-section is replaced by a generalized Negativity lemma, see Proposition \ref{prp:negativity intro} below. A more serious issue is that we cannot really work on a log resolution, because it ``extracts too many divisors''. Instead, we have to pass to a \emph{minimal dlt model} $(Z, D_Z) \to (X, D)$, a partial resolution of $(X, D)$ which extracts only divisors of discrepancy exactly $-1$. However, $(Z, D_Z)$ is not an snc pair, but only a dlt pair, which makes the proof technically rather involved. In particular, we need to know that a residue sequence still exists for dlt pairs. This is provided by Theorem~\ref{thm:res of symm diff intro} below.

\subsection{Further results}

In the course of the proof, we show the following generalization of the Negativity lemma, which we feel might be of independent interest.

\begin{prp}[Negativity lemma for bigness, see Proposition \ref{prp:negativity}] \label{prp:negativity intro}
Let $\pi\!: Y \to X$ be a proper birational morphism between normal quasi-projective varieties. Then for any nonzero effective $\pi$-exceptional $\Q$-Cartier divisor $E$, there is a component $E_0 \subset E$ such that $-E|_{E_0}$ is $\pi|_{E_0}$-big.
\end{prp}

Furthermore, we prove that a residue map exists for symmetric differential forms on dlt pairs. \cite[Thm.~11.7]{GKKP11} essentially is the special case $k = 1$ of this. Note that we do not use \cite[Thm.~11.7]{GKKP11} in the proof.

\begin{thm}[Residues of symmetric differentials, see Theorem \ref{thm:res of symm diff}] \label{thm:res of symm diff intro}
Let $(X, D)$ be a dlt \CC-pair and $D_0 \subset \rd D.$ a component of the reduced boundary. Set $D_0^c := \Diff_{D_0}(D - D_0)$. Then the pair $(D_0, D_0^c)$ is also a dlt \CC-pair, and for any integer $p \ge 1$, there is a map
\[ \res_{D_0}^k\!: \Sym_\CC^{[k]} \Omega_X^p(\log D) \to \Sym_\CC^{[k]} \Omega_{D_0}^{p-1}(\log D_0^c) \]
which on the snc locus of $(X, \rp D.)$ coincides with the $k$-th symmetric power of the usual residue map for snc pairs.
\end{thm}
In Theorem \ref{thm:res of symm diff intro}, the divisor $\Diff_{D_0}(D - D_0)$ is the \emph{different divisor}, whose definition is recalled in Proposition/Definition \ref{prpdfn:different}.
The symbol $\Sym_\CC^{[k]} \Omega_X^p(\log D)$ denotes the sheaf of orbifold pluri-differentials. Its definition is recalled in Definition \ref{dfn:C-diff}.

\subsection{An application of Theorem \ref{thm:lc BS van}}

Using Serre duality, we prove a Kodai\-ra--Akizuki--Nakano-type vanishing result for the top cohomology groups of certain sheaves of twisted reflexive differentials. Due to dimension reasons, this also holds for K\"ahler differentials.

\begin{cor}[KAN-type vanishing, see Corollaries \ref{cor:Serre dual lc BS van} and \ref{cor:Serre dual lc BS van II}] \label{cor:Serre dual lc BS van intro}
Let $(X, D)$ be a complex projective log canonical pair of dimension $n$ and $\sA$ a Weil divisorial sheaf on $X$. Then
\[ \begin{array}{llll}
H^n \bigl( X, \bigl( \Omega_X^{[p]}(\log \rd D.) \tensor \sA \bigr)^{**} \bigr) & \!\! = & \!\! 0 & \text{and} \\[1ex]
H^n \bigl( X, \Omega_X^p(\log \rd D.) \tensor \sA \bigr)                        & \!\! = & \!\! 0 & \\
\end{array} \]
for all $p > n - \kappa(\sA)$.
\end{cor}
}

\subsection{Outline of the paper}

This paper is divided into four parts.

In Part \ref{part:I}, we fix notation and recall some technical results concerning reflexive sheaves. Furthermore we prove the Negativity lemma for bigness, as mentioned above in Proposition \ref{prp:negativity intro}.

Part \ref{part:II} is devoted to a discussion of dlt pairs and their \CC-differentials. In particular, we prove Theorem \ref{thm:res of symm diff intro} and a weak version of Bogomolov--Sommese vanishing on dlt \CC-pairs, which is applied in the main proof.

Part \ref{part:III} contains the proof of the Main Theorem \ref{thm:lc BS van}, as well as an example showing that it fails as soon as one relaxes the assumption on log canonicity to $X$ having Du Bois singularities.

\PreprintAndPublication{
Finally, in Part \ref{part:IV} we prove Corollaries \ref{cor:easy lc kappa++ intro} and \ref{cor:Serre dual lc BS van intro}.
}{
Finally, in Part \ref{part:IV} we prove Corollary \ref{cor:Serre dual lc BS van intro}.
}

\subsection{Acknowledgments}

The results of this paper are part of the author's Ph.D. thesis. He would like to thank his supervisor, Stefan Kebekus, for fruitful advice and support. He would also like to thank the members of Kebekus' research group, especially Daniel Greb, Clemens J\"order, and Alex K\"uronya, as well as the research group's past guests, especially Kiwamu Watanabe, for interesting discussions.

\PreprintAndPublication{
The author is particularly grateful to Stefan Kebekus, Alex K\"uronya, and the anonymous referee for careful proofreading and valuable suggestions.
}{
The author is particularly grateful to Stefan Kebekus, Alex K\"uronya, and the anonymous referee for valuable suggestions concerning the exposition of this paper.
}

\part{TECHNICAL PREPARATIONS} \label{part:I}

\section{Setup of notation} \label{sec:notation}

\subsection{Base field} \label{subsec:base field}

\PreprintAndPublication{
Throughout this paper, we work over the field of complex numbers $\C$. Even the smooth version of Bogomolov--Sommese vanishing, Theorem \ref{thm:BS van}, is known to be false over fields of positive characteristic -- see \cite[Rem.~7.1]{Kol95} or the remark at the end of \cite[\S 12]{Bog79}.
}{
Throughout this paper, we work over the field of complex numbers $\C$. Even the smooth version of Bogomolov--Sommese vanishing, Theorem \ref{thm:BS van}, is known to be false over fields of positive characteristic -- see the remark at the end of \cite[\S 12]{Bog79}.
}

\subsection{Reflexive sheaves} \label{subsec:refl sheaves}
Let $X$ be a normal variety. For any coherent sheaf $\ms E$ on $X$, $\ms E^{**}$ denotes the \emph{double dual} of $\ms E$. The sheaf $\ms E$ is called \emph{reflexive} if the canonical map $\ms E \to \ms E^{**}$ is an isomorphism. We use square brackets $^{[\cdot]}$ as a shorthand notation for the double dual, i.e.~$\ms E^{[k]}$ means the \emph{reflexive tensor power} $(\ms E^{\tensor k})^{**}$, and analogously for $\bigwedge^{[k]} \ms E$ and $\Sym^{[k]} \ms E$. A \emph{Weil divisorial sheaf} is a reflexive sheaf of rank 1. A Weil divisorial sheaf $\ms A$ is called \emph{$\Q$-Cartier} if $\ms A^{[k]}$ is invertible for some $k > 0$.

If $f\!: Y \to X$ is any morphism from a normal variety $Y$, then the \emph{reflexive pullback} $f^{[*]} \ms E := (f^* \ms E)^{**}$. If $f$ is a closed immersion, we write $\ms E\big|_Y^{**}$ instead.

Let $D$ be an effective reduced divisor on $X$. Then the sheaf of \emph{reflexive differential $p$-forms} is $\Omega_X^{[p]}(\log D) := \left(\bigwedge^p \Omega_X^1(\log D)\right)^{**}$, where $\Omega_X^1(\log D)$ is the sheaf of logarithmic K\"ahler differentials.

A subsheaf $\ms A$ of a reflexive sheaf $\ms E$ is said to be \emph{saturated} if the quotient $\ms E/\ms A$ is torsion-free.

\PreprintAndPublication{
As a general reference for this material, we recommend \cite{OSS80}, \cite{Sch10} and \cite{GKKP11}.
}{
As a general reference for this material, we recommend \cite{OSS80} and \cite{GKKP11}.
}

\subsection{Birational Geometry} \label{subsec:biratl geom}

Nothing else being said, by a divisor we mean a Weil divisor with $\R$-coefficients. For $k \in \{ \Q, \R \}$, a divisor is called \emph{$k$-Cartier} if it is a $k$-linear combination of integral Cartier divisors. A \emph{pair} $(X, D)$ consists of a normal variety $X$ and an effective divisor $D$ on $X$. The pair is called \emph{reduced} if $D$ is reduced, i.e.~if all nonzero coefficients of $D$ are equal to $1$. For the definitions of \emph{log canonical} and \emph{dlt} pairs, see \cite[Sec.~2.3]{KM98}. Note that the definitions given there only apply if $D$ is a $\Q$-divisor, but they carry over without change to the case where $D$ is allowed to have arbitrary real coefficients.

Let $f\!: Y \to X$ be a proper birational morphism between normal varieties. The \emph{exceptional locus} of $f$ is denoted by $\Exc(f)$. Let $D$ be a divisor on $X$. Then $f^{-1}_{\;\;\;*} D$ is the \emph{strict transform} of $D$ on $Y$. The \emph{round-down} of $D$, defined by rounding down the coefficients, is denoted $\rd D.$. Ditto for the \emph{round-up} $\rp D.$. The \emph{fractional part}~$\{ D \}$ is, by definition, $D - \rd D.$.

Now let $f\!: Y \to X$ be an arbitrary proper morphism between $Y$ and $X$. Two divisors $D_1$ and $D_2$ on $Y$ are said to be \emph{$\R$-linearly equivalent over $X$}, written $D_1 \sim_{\R, X} D_2$, if their difference is an $\R$-linear combination of principal divisors, and Cartier divisors pulled back by $f$.

The smooth and singular loci of $X$ are denoted $X_\sm$ and $X_\sg$, respectively. The non-snc locus of a pair $(X, D)$ is denoted $(X, D)_\nsnc$. A subset $Z \subset X$ is called \emph{small} if it is closed and $\codim_X Z \ge 2$. An open subset $U \subset X$ is called \emph{big} if its complement is small.

\section{Reflexive sheaves} \label{sec:refl sheaves}

In this section, we gather some technical results concerning reflexive sheaves and their saturated subsheaves. All of this is well-known. The reader familiar with this topic can go directly to Section \ref{sec:neg lemma}. Throughout this section, $X$ denotes a normal variety, and all sheaves occurring are tacitly assumed to be quasi-coherent.

\begin{lem}[Singular loci of torsion-free sheaves] \label{lem:sg locus of tor-free}
Let $\ms F$ be a torsion-free sheaf on $X$. Then there is a big open subset $U \subset X$ such that $\ms F\big|_U$ is locally free.
\end{lem}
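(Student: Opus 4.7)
The plan is to show that the non-locally-free locus of $\ms F$ is closed of codimension at least two, which by definition means that its complement is big.

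First I would verify that the locally free locus $U \subset X$ is open. Since $X$ is a variety and $\ms F$ is coherent (as a torsion-free quasi-coherent sheaf on a Noetherian scheme), $\ms F$ is of finite presentation, and local freeness of a finitely presented module is a well-known open condition. So it suffices to exhibit, for each point $x \in X$ of codimension $\le 1$, local freeness of $\ms F_x$ over $\O_{X,x}$.

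For the generic points of $X$, the stalk $\ms F_x$ is a finitely generated module over the field $k(x)$, hence automatically free. For a codimension-one point $x$, I would invoke normality of $X$: the local ring $\O_{X,x}$ is regular of dimension one, i.e.\ a discrete valuation ring. Any finitely generated torsion-free module over a principal ideal domain is free by the structure theorem, so $\ms F_x$ is free as well. Hence $U$ contains every point of codimension $\le 1$.

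Combining these two observations, $X \setminus U$ is closed and contains no points of codimension $\le 1$, so $\codim_X (X \setminus U) \ge 2$, which is exactly the assertion that $U$ is big in the sense of Section~\ref{subsec:biratl geom}. There is no real obstacle here; the only subtlety worth mentioning is the appeal to normality to ensure that the local rings at codimension-one points are DVRs, which is the statement that drives the whole argument.
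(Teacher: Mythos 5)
Your argument is correct, but it takes a different route from the paper. The paper's proof simply removes the singular locus of $X$ (a small subset, since $X$ is normal and hence regular in codimension one) and then quotes the result of Okonek--Schneider--Spindler that a coherent torsion-free sheaf on a smooth variety is locally free outside a closed set of codimension $\ge 2$. You instead prove that statement directly: openness of the locally free locus plus freeness of the stalks at all points of codimension $\le 1$, the codimension-one case resting on Serre's criterion ($X$ normal $\Rightarrow$ R1, so the local rings there are DVRs) and the structure theorem for finitely generated modules over a PID. Your version is more self-contained and makes transparent exactly where normality enters, whereas the paper's is a two-line reduction to a standard reference; the underlying mathematics is essentially the same, since the cited corollary is itself proved by a stalkwise depth/homological-dimension argument. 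One caveat: your parenthetical claim that a torsion-free quasi-coherent sheaf on a Noetherian scheme is automatically coherent is false (the constant sheaf of rational functions is a counterexample, and for it the lemma itself fails). Coherence is the intended standing hypothesis here --- every step of your proof, like the paper's, genuinely uses finite generation of the stalks --- so you should simply assume $\ms F$ coherent rather than try to derive it.
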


\begin{proof}
Removing the singular locus of $X$, which is a small subset, we may assume that $X$ is smooth. Then the assertion follows from \cite[Cor.~on p.~148]{OSS80}.
\end{proof}

\begin{dfn}[Normal sheaves]
A sheaf $\ms F$ on $X$ is called \emph{normal} if for any open set $U \subset X$ and any small subset $Z \subset U$, the restriction map
\[ \mathit\Gamma(U, \sF) \to \mathit\Gamma(U \minus Z, \sF) \]
is an isomorphism.
\end{dfn}

\begin{rem}
The connection between normal sheaves and normal varieties is as follows: a variety $Y$ is normal if and only if it is smooth in codimension one and $\O_Y$ is normal \cite[Thm.~in (3.18) on p.~368]{Rei87}.
\end{rem}

\begin{lem}[Characterization of reflexive sheaves] \label{lem:char reflexive}
A sheaf $\ms F$ on $X$ is reflexive if and only if it is torsion-free and normal.
\end{lem}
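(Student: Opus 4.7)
The plan is to prove the two implications separately, leveraging two structural facts: on the normal variety $X$, the structure sheaf $\sO_X$ is itself a normal sheaf, and Lemma~\ref{lem:sg locus of tor-free} provides a big open subset of $X$ on which any torsion-free sheaf is locally free.

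For the direction ``reflexive $\Rightarrow$ torsion-free and normal,'' I would first observe that a reflexive $\sF$ satisfies $\sF \isom \sF^{**} = \sHom(\sF^*, \sO_X)$, so it suffices to show that any sheaf of the form $\sHom(\sG, \sO_X)$ inherits both properties from $\sO_X$. Torsion-freeness is immediate: if $f \varphi = 0$ for a nonzero regular function $f$ and some local homomorphism $\varphi$, then $\varphi$ takes values in the torsion subsheaf of $\sO_X$, which vanishes. For normality, a map $\varphi\colon \sG|_{U \minus Z} \to \sO_{U \minus Z}$ with $Z$ small in $U$ sends each local section of $\sG$ to a section of $\sO_X$ defined away from $Z$; normality of $\sO_X$ (which holds because $X$ is normal) extends these uniquely across $Z$, and the extensions patch into a homomorphism by uniqueness, yielding the required extension of $\varphi$.

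For the converse, suppose $\sF$ is torsion-free and normal. The canonical map $\iota\colon \sF \to \sF^{**}$ is injective since its kernel is contained in the torsion of $\sF$. By the forward direction applied to $\sF^{**}$, the double dual is also torsion-free and normal, so Lemma~\ref{lem:sg locus of tor-free} provides a big open $U \subset X$ on which both sheaves are locally free and $\iota$ is an isomorphism. To prove surjectivity, pick an open $V \subset X$ and $t \in \sF^{**}(V)$: I would restrict $t$ to $V \cap U$, pull it back via $\iota^{-1}$ to $s_0 \in \sF(V \cap U)$, and extend $s_0$ to $s \in \sF(V)$ using normality of $\sF$, noting that $V \minus U$ is small in $V$ because codimension is a local property. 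Then $\iota(s) - t \in \sF^{**}(V)$ vanishes on the big open set $V \cap U$, and normality of $\sF^{**}$ forces $\iota(s) = t$ on $V$.

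I do not anticipate a serious obstacle: the argument is essentially formal once one recognises the two structural inputs above. The only point requiring modest care is verifying that normality can be applied at the extension step, which reduces to the observation that smallness of $X \minus U$ in $X$ is inherited by $V \minus U$ in $V$ for every open $V \subset X$.
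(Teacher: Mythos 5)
Your proof is correct and is essentially the standard argument that the paper invokes by citing \cite[Lem.~1.1.12]{OSS80} and remarking that it carries over verbatim to normal $X$: reflexive sheaves are duals, duals inherit torsion-freeness and the Hartogs/normality property from $\O_X$, and conversely normality plus torsion-freeness upgrade the generic isomorphism $\sF \to \sF^{**}$ on the big locally free locus to a global one. No gaps; the one point you rightly flag (smallness of $V \setminus U$ in $V$) is exactly the local nature of codimension.
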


\begin{proof}
For smooth $X$, this is \cite[Lem.~1.1.12]{OSS80}. If $X$ is only normal, the proof still applies verbatim.
\end{proof}

\begin{lem}[Reflexivity of saturation] \label{lem:saturation reflexive}
Let $\ms A \subset \ms E$ be sheaves on $X$, with $\ms E$ reflexive and $\ms A$ saturated in $\ms E$. Then $\ms A$ is reflexive, too.
\end{lem}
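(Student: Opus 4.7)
My plan is to invoke Lemma \ref{lem:char reflexive} and reduce the problem to showing that $\ms A$ is both torsion-free and normal. Torsion-freeness is essentially for free: as a subsheaf of the reflexive (hence torsion-free) sheaf $\ms E$, the sheaf $\ms A$ inherits torsion-freeness. So the substantive content is in proving normality.

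To prove normality, I would fix an open set $U \subset X$ and a small closed subset $Z \subset U$, and show the restriction map $\mathit\Gamma(U, \ms A) \to \mathit\Gamma(U \minus Z, \ms A)$ is bijective. Injectivity follows from torsion-freeness of $\ms A$, since $U \minus Z$ is dense in $U$. For surjectivity, take a section $s \in \mathit\Gamma(U \minus Z, \ms A)$ and view it as a section of $\ms E$ via the inclusion $\ms A \subset \ms E$. Since $\ms E$ is reflexive and therefore normal, $s$ extends uniquely to a section $\tilde s \in \mathit\Gamma(U, \ms E)$.

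The remaining step is to verify that $\tilde s$ actually lies in $\mathit\Gamma(U, \ms A)$. For this, consider the image $\bar{\tilde s}$ of $\tilde s$ under the quotient map $\ms E \to \ms E/\ms A$. By construction, $\bar{\tilde s}$ vanishes on the dense open subset $U \minus Z$. Since $\ms A$ is saturated in $\ms E$ by hypothesis, the quotient $\ms E/\ms A$ is torsion-free, so any section vanishing on a dense open must vanish identically. Therefore $\bar{\tilde s} = 0$, which means $\tilde s$ lifts to $\mathit\Gamma(U, \ms A)$ and provides the desired extension of $s$.

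I do not anticipate any serious obstacle here; the proof is a straightforward assembly of the characterization of reflexivity (Lemma \ref{lem:char reflexive}), the definition of saturation, and the standard fact that sections of torsion-free sheaves over normal varieties are determined by their restriction to any dense open subset. The only minor point requiring attention is ensuring that the small subset $Z$ really makes $U \minus Z$ dense in $U$, which is automatic because $\codim_U Z \ge 2 > 0$ and $X$ (hence $U$) is irreducible.
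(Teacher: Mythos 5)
Your proof is correct and follows the same route as the paper: both reduce via Lemma \ref{lem:char reflexive} to checking that $\ms A$ is torsion-free (clear, as a subsheaf of a torsion-free sheaf) and normal. The only difference is that the paper cites \cite[Lem.~1.1.16]{OSS80} for normality, whereas you prove that step directly --- your extension argument through $\ms E$ and the torsion-freeness of $\ms E/\ms A$ is exactly the content of that reference.
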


\begin{proof}
By Lemma \ref{lem:char reflexive}, $\ms E$ is torsion-free. Hence so is $\ms A$, since it is a subsheaf of $\ms E$. In addition, by \cite[Lem.~1.1.16]{OSS80} $\ms A$ is normal\footnote{Although \cite{OSS80} only deals with smooth $X$, the argument in the general case is the same.}. Now apply Lemma \ref{lem:char reflexive} to $\ms A$.
\end{proof}

\begin{lem}[Subbundle property] \label{lem:saturation-injective}
Let $\ms A \subset \ms E$ be reflexive sheaves on $X$, with $\ms A$ saturated in $\ms E$. Then there is a big open subset $U \subset X$ such that both $\ms A\big|_U$ and $\ms E\big|_U$ are locally free, and $\ms A\big|_U$ is a sub-vector bundle of $\ms E\big|_U$.
\end{lem}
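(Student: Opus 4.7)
The plan is straightforward: the saturation hypothesis forces the quotient $\sQ := \sE/\sA$ to be torsion-free, and the claim then reduces to applying Lemma~\ref{lem:sg locus of tor-free} to each of the three sheaves in the short exact sequence
\[ 0 \to \sA \to \sE \to \sQ \to 0. \]

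First I would invoke Lemma~\ref{lem:sg locus of tor-free} three times, producing big open subsets $U_1, U_2, U_3 \subset X$ on which $\sA$, $\sE$, and $\sQ$ respectively are locally free. Setting $U := U_1 \cap U_2 \cap U_3$ yields a big open subset (the complement is a finite union of small subsets, hence small) on which all three sheaves are simultaneously locally free.

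Next I would verify the subbundle property on $U$. Restricting the short exact sequence to $U$ preserves exactness since restriction to an open subset is exact. Because $\sQ\big|_U$ is locally free, the sequence
\[ 0 \to \sA\big|_U \to \sE\big|_U \to \sQ\big|_U \to 0 \]
remains exact after tensoring with the residue field at any point $x \in U$, so the fibre map $\sA(x) \to \sE(x)$ is injective. This is precisely the definition of a sub-vector bundle, completing the proof.

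I do not expect any real obstacle here: the argument is purely formal, and the only non-trivial input is Lemma~\ref{lem:sg locus of tor-free}, which has already been established. The one point worth flagging is that the saturation assumption is used \emph{exactly} to ensure that $\sQ$ is torsion-free; without it, $\sQ$ need not be locally free on any big open set, and the fibrewise injectivity of $\sA \hookrightarrow \sE$ can fail on a codimension-one locus.
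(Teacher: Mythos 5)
Your proposal is correct and follows the same route as the paper: both use the saturation hypothesis to get torsion-freeness of $\sE/\sA$, apply Lemma~\ref{lem:sg locus of tor-free} to all three sheaves in the short exact sequence to find a common big open set of local freeness, and then conclude the subbundle property from local freeness of the quotient (the paper phrases this as the sequence being locally split, you phrase it as fibrewise injectivity after tensoring with the residue field --- these are equivalent). No gaps.
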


\begin{proof}
By Lemma \ref{lem:sg locus of tor-free}, there is a big open set $U \subset X$ such that the sheaves $\ms A\big|_U$, $\ms E\big|_U$ and $(\ms E/\ms A)\big|_U$ are locally free. Then the short exact sequence
\[ 0 \to \ms A\big|_U \to \ms E\big|_U \to (\ms E/\ms A)\big|_U \to 0 \]
is locally split. The claim follows.
\end{proof}

\begin{dfn}[Rational sections] \label{dfn:ratl section}
A \emph{rational section} $s$ of a reflexive sheaf $\ms F$ on $X$ is an element of $\mathit\Gamma(U, \ms F)$ for some dense open subset $U \subset X$. The rational section $s$ is called regular if it lies in the image of the restriction map $\mathit\Gamma(X, \sF) \to \mathit\Gamma(U, \sF)$.
\end{dfn}

\begin{dfn}[Sheaf of sections with arbitrary poles] \label{dfn:383}
If $X$ is a normal variety, $D \subset X$ a reduced Weil divisor, and $\sF$ a reflexive sheaf on $X$, we denote by $\sF(*D) := \varinjlim \bigl( \sF \tensor \O_X(mD) \bigr)^{**}$ the (quasi-coherent) sheaf of sections of $\sF$ with arbitrary poles along $D$.
\end{dfn}

\begin{rem}
If $X$ is a normal variety, $\sF$ a reflexive sheaf on $X$, and $U \subset X$ a big open set, then the restriction map $\mathit\Gamma(X, \sF) \to \mathit\Gamma(U, \sF)$ is bijective.
If $U$ is not big and $D$ denotes the codimension-one part of $X \minus U$, then any section $s \in \mathit\Gamma(U, \sF)$ extends uniquely to a section $\bar s \in \mathit\Gamma(X, \sF(*D))$.
\end{rem}

\begin{lem}[Invariance of pole orders] \label{lem:saturation-poles}
Let $\ms E$ be a reflexive sheaf on $X$, with a saturated subsheaf $\ms A \subset \ms E$. Let $s$ be a rational section of $\ms A$. If $s$ is regular as a section of $\ms E$, then $s$ is also regular as a section of $\sA$.
\end{lem}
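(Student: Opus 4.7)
My plan is to exploit the defining property of saturation, namely that the quotient $\sQ := \sE/\sA$ is torsion-free. First I would let $\tilde s \in \mathit\Gamma(X, \sE)$ denote the regular extension of $s$ whose existence is assumed, and consider its image $\bar s \in \mathit\Gamma(X, \sQ)$ under the surjection $\sE \twoheadrightarrow \sQ$.

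By hypothesis $s \in \mathit\Gamma(U, \sA)$ for some dense open $U \subset X$, so $\tilde s|_U = s$ lies in the image of $\sA(U) \to \sE(U)$, whence $\bar s|_U = 0$. The heart of the argument is then the claim that any section of a torsion-free coherent sheaf that vanishes on a dense open subset vanishes identically. This is standard: at every point $x \in X$ the stalk $\sQ_x$ is a torsion-free module over the integral domain $\O_{X,x}$ and therefore injects into its localization at the zero ideal, which is nothing but the generic stalk $\sQ_\eta$. Since $\eta \in U$ and $\bar s|_U = 0$, we have $\bar s_\eta = 0$, and the injections $\sQ_x \hookrightarrow \sQ_\eta$ then force $\bar s_x = 0$ for every $x \in X$. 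Hence $\bar s = 0$.

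To conclude, I would invoke the left exactness of $\mathit\Gamma(X, -)$ applied to the short exact sequence $0 \to \sA \to \sE \to \sQ$: the vanishing $\bar s = 0$ means that $\tilde s$ lifts (necessarily uniquely) to a section in $\mathit\Gamma(X, \sA)$, which is the required regular extension of $s$. I do not foresee any serious obstacle here; the statement is a purely formal consequence of saturation, and in particular neither the reflexivity of $\sA$ nor Lemma~\ref{lem:saturation-injective} is needed for the argument.
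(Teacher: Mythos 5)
Your proposal is correct and follows the same route as the paper: push the regular extension into the torsion-free quotient $\sE/\sA$, observe it vanishes on a dense open set and hence identically, and conclude that the section lifts to $\sA$. The only difference is that you spell out the stalk-wise argument for why a section of a torsion-free sheaf vanishing generically must vanish everywhere, which the paper leaves implicit.
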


\begin{proof}
Let $s$ be the restriction of some $\bar s \in H^0(X, \sE)$, and let $s'$ be the image of $\bar s$ in $H^0(X, \sE/\sA)$. By Definition \ref{dfn:ratl section}, $s \in \mathit\Gamma(U, \sA)$ for some dense open $U \subset X$. We then have $s'|_U = 0$. It follows that $s' = 0$, because $\sE/\sA$ is torsion-free. Hence $\bar s \in H^0(X, \ms A)$.
\end{proof}

\begin{lem}[Generically equal subsheaves] \label{lem:gen eq}
Let $\ms E$ be a reflexive sheaf on $X$, with two subsheaves $\sA, \sB \subset \sE$, where $\sB$ is saturated in $\sE$. Suppose that for some dense open subset $U \subset X$, we have that $\sA|_U$ and $\sB|_U$ are equal as subsheaves of $\sE|_U$. Then $\sA \subset \sB$ as subsheaves of $\sE$.
\end{lem}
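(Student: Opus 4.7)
My plan is to translate the set-theoretic inclusion $\sA \subset \sB$ into the vanishing of the composed morphism of sheaves
\[ \varphi\!: \sA \hookrightarrow \sE \surj \sE/\sB. \]
The key input is that $\sE/\sB$ is torsion-free, by the very definition of saturatedness (Lemma \ref{lem:saturation reflexive} even says $\sB$ itself is reflexive, but only torsion-freeness of the quotient is needed here). The hypothesis that $\sA|_U = \sB|_U$ as subsheaves of $\sE|_U$ translates directly into $\varphi|_U = 0$, so the task reduces to propagating this vanishing from the dense open subset $U$ to all of $X$.

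To carry out the extension, I would take an arbitrary open $V \subset X$ and a section $s \in \sA(V)$, and show that $\varphi(s) \in (\sE/\sB)(V)$ vanishes. Since $X$ is an irreducible variety, its generic point $\eta$ lies in the nonempty dense open set $V \cap U$, on which $\varphi$ is already known to vanish. Hence $\varphi(s)$ has zero image in the stalk $(\sE/\sB)_\eta$. But for a torsion-free coherent sheaf on an integral scheme, the natural map into the stalk at the generic point is injective, so $\varphi(s) = 0$, as required. No genuine obstacle is expected in this argument; the only step meriting a moment's care is the last one, which is a standard property of torsion-free sheaves on integral schemes.
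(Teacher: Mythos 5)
Your proposal is correct and is essentially the paper's argument: the paper also reduces the claim to the vanishing of the image of a section of $\sA$ in the torsion-free quotient $\sE/\sB$, merely packaging the step ``vanishes on a dense open, hence vanishes'' as a separate lemma on invariance of pole orders (Lemma \ref{lem:saturation-poles}). The only cosmetic difference is that you phrase it via the generic stalk of the composed morphism $\sA \to \sE/\sB$ rather than section by section through that lemma.
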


\begin{proof}
Let $s$ be a section of $\sA$. Then $s$ is a rational section of $\sB$, which is regular as a section of $\sE$. By Lemma \ref{lem:saturation-poles}, we conclude that $s$ is a regular section of $\sB$.
\end{proof}

\begin{lem}[Restriction to a divisor] \label{lem:saturation-restriction}
Let $\ms A \subset \ms E$ be reflexive sheaves on $X$. If $\ms A$ is saturated in $\ms E$ and $D \subset X$ is a prime divisor such that $\supp D$ is normal, then the induced map $\alpha\!: \ms A\big|_D^{**} \to \ms E\big|_D^{**}$ is injective.
\end{lem}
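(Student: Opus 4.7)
The approach is to reduce injectivity of $\alpha$ on all of $D$ to injectivity on a single dense open subset, exploiting the fact that $\ms A\big|_D^{**}$ is reflexive, hence torsion-free, on the normal (and irreducible, as a prime divisor) variety $D$.

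First I would invoke Lemma \ref{lem:saturation-injective} to obtain a big open subset $U \subset X$ on which both $\ms A$ and $\ms E$ are locally free and $\ms A\big|_U$ is a sub-vector bundle of $\ms E\big|_U$. The key observation is that $V := D \cap U$ is dense in $D$: since $D$ is a prime divisor and $X \minus U$ has codimension at least two in $X$, $D$ cannot be contained in $X \minus U$, so $V$ is non-empty, and hence dense in the irreducible variety $D$.

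Next, because $\ms A\big|_U \hookrightarrow \ms E\big|_U$ is a subbundle, the quotient $(\ms E / \ms A)\big|_U$ is locally free and the short exact sequence $0 \to \ms A|_U \to \ms E|_U \to (\ms E/\ms A)|_U \to 0$ is locally split. Restricting to $V$ preserves exactness, yielding an injection $\ms A\big|_V \hookrightarrow \ms E\big|_V$. Since both of these sheaves are already locally free on $V$, double dualization has no effect there, so $\alpha|_V$ coincides with this injection and is therefore injective.

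Finally, $\ker(\alpha)$ is a subsheaf of the reflexive (hence torsion-free) sheaf $\ms A\big|_D^{**}$ on the irreducible variety $D$, so it too is torsion-free. A torsion-free sheaf that vanishes on a dense open subset must vanish identically; since $\ker(\alpha)|_V = 0$, we conclude $\ker(\alpha) = 0$. The only nontrivial step in this argument is the density of $V$, which follows from the elementary codimension count above; no serious obstacle is anticipated.
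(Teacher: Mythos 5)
Your proof is correct and follows the same route as the paper: Lemma \ref{lem:saturation-injective} gives generic injectivity of $\alpha$, and torsion-freeness of the reflexive sheaf $\ms A\big|_D^{**}$ (Lemma \ref{lem:char reflexive}) upgrades this to injectivity everywhere. You have merely spelled out in more detail the two points the paper leaves implicit, namely that $D \cap U$ is dense in $D$ and that the locally split sequence restricts injectively.
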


\begin{proof}
By Lemma \ref{lem:saturation-injective}, the map $\alpha$ is generically injective. But then it is already injective, since $\ms A\big|_D^{**}$ is torsion-free by Lemma \ref{lem:char reflexive}.
\end{proof}

\begin{lem}[Extending morphisms] \label{lem:ext morphisms}
Let $\ms A, \ms B$ be reflexive sheaves on $X$. Then:
\begin{enumerate}
\item\label{itm:extmor1} Any sheaf morphism $\alpha\!: \ms A\big|_U \to \ms B\big|_U$ on some big open subset $U \subset X$ extends uniquely to a sheaf morphism $\overline\alpha\!: \ms A \to \ms B$.
\item\label{itm:extmor2} If $\alpha$ is an isomorphism, then so is $\overline\alpha$. Ditto for $\alpha$ injective.
\end{enumerate}
\end{lem}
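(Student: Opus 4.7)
The plan is to derive both statements from the following observation: the sheaf $\sHom_{\O_X}(\sA, \sB)$ is normal in the sense of Section \ref{sec:refl sheaves}. Since $\sB$ is reflexive and therefore normal by Lemma \ref{lem:char reflexive}, normality transfers to $\sHom$: given any open $V \subset X$ and any small closed subset $Z \subset V$, a morphism $\sA|_{V \minus Z} \to \sB|_{V \minus Z}$ extends uniquely to $\sA|_V \to \sB|_V$, because applying such a morphism to any local section of $\sA$ produces a section of $\sB$ on $V \minus Z$ which extends uniquely across $Z$ by normality of $\sB$. Gluing these pointwise-defined values yields the desired extension, and the extension is automatically $\O_X$-linear and compatible with restrictions by uniqueness.

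For part (1), this observation applies directly: since $X \minus U$ is small by assumption, $\alpha$, regarded as a section of $\sHom_{\O_X}(\sA, \sB)$ over $U$, extends uniquely to a section over $X$, that is, to a sheaf morphism $\overline\alpha\colon \sA \to \sB$.

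For part (2), injectivity of $\overline\alpha$ is essentially formal: the kernel $\sK := \ker \overline\alpha$ is a subsheaf of $\sA$ and hence torsion-free by Lemma \ref{lem:char reflexive}. Since $\overline\alpha|_U = \alpha$ is injective, $\sK|_U = 0$; in particular the stalk of $\sK$ at the generic point of $X$ vanishes, and torsion-freeness then forces $\sK = 0$. If $\alpha$ is an isomorphism, I apply part (1) once more to $\alpha^{-1}\colon \sB|_U \to \sA|_U$ to obtain a morphism $\beta\colon \sB \to \sA$. The two morphisms $\beta \circ \overline\alpha$ and $\id_\sA$ both restrict to $\id_{\sA|_U}$, so by the uniqueness statement in (1) (applied with target $\sA$) they coincide; the symmetric argument shows $\overline\alpha \circ \beta = \id_\sB$, so $\overline\alpha$ is an isomorphism with inverse $\beta$.

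No serious obstacle is expected. The only point requiring genuine verification is the normality of $\sHom_{\O_X}(\sA, \sB)$; but this is a direct consequence of the normality of $\sB$, as sketched above, and does not need any deeper ingredient than Lemma \ref{lem:char reflexive}. In effect, the entire lemma is a purely formal consequence of the characterization of reflexive sheaves as torsion-free normal sheaves.
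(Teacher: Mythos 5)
Your argument is correct and is essentially the paper's own proof in slightly different packaging: the paper defines $\overline\alpha(V)$ directly via the restriction isomorphism $\rho^{\sB}_{V, V\cap U}$ (i.e.\ normality of $\sB$), which is exactly the mechanism behind your normality claim for $\sHom_{\O_X}(\sA,\sB)$, and your treatment of part (2) (kernel is torsion-free and generically zero; inverse obtained by extending $\alpha^{-1}$ and invoking uniqueness) matches the paper's. No gaps.
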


\begin{proof}
Let $V \subset X$ be any open set. Denoting by $\rho_{\cdot,\cdot}$ the restriction maps, we must have
\[ \rho_{V, V \cap U}^\sB \circ \overline\alpha(V) = \alpha(V \cap U) \circ \rho_{V, V \cap U}^\sA. \]
By Lemma \ref{lem:char reflexive}, $\rho_{V, V \cap U}^\sB$ is an isomorphism, hence there is a unique way of defining $\overline\alpha(V)$.

As for the second part, if $\alpha$ is an isomorphism, then the inverse of $\overline\alpha$ is given by $\overline{\alpha^{-1}}$. And if $\alpha$ is injective, then $\overline\alpha$ is generically injective, hence injective by the torsion-freeness of $\sA$.
\end{proof}

\begin{lem}[Reduction lemma] \label{lem:reduction}
Let $\ms A, \ms E, \ms F, \ms G$ be reflexive sheaves on $X$, with $\ms A$ of rank 1. Assume that on some big open set $U \subset X$ where $\ms E, \ms F$ and $\ms G$ are locally free, there is a short exact sequence
\begin{equation} \label{eqn:ses|_U}
0 \to \ms F\big|_U \to \ms E\big|_U \to \ms G\big|_U \to 0.
\end{equation}
If there exists an inclusion $\ms A \inj \bigwedge^{[r]} \ms E$, then
\[ \textstyle \ms A \inj \big(\bigwedge^{[j]} \ms F \tensor \bigwedge^{[r-j]} \ms G\big)^{**} \]
for some $0 \le j \le r$.
\end{lem}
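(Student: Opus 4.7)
The plan is to produce the required inclusion first on the big open set $U$, using the standard filtration of an exterior power of an extension, and then to extend the resulting map to all of $X$ via Lemma \ref{lem:ext morphisms}.

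First, I would shrink $U$ further (while keeping it big) so that $\sA|_U$ is also locally free; this is possible by Lemma \ref{lem:sg locus of tor-free}. Over $U$ the short exact sequence \eqref{eqn:ses|_U} is a sequence of vector bundles, so taking the $r$-th exterior power produces the classical descending filtration
\[ \textstyle \bigwedge^r \sE|_U = F^0 \supset F^1 \supset \dots \supset F^r \supset F^{r+1} = 0, \]
whose successive quotients are
\[ \textstyle F^j / F^{j+1} \isom \bigwedge^j \sF|_U \tensor \bigwedge^{r-j} \sG|_U \qquad (0 \le j \le r). \]
Composing the given inclusion $\sA \inj \bigwedge^{[r]} \sE$ with restriction to $U$ yields an injection of line bundles $\sA|_U \inj \bigwedge^r \sE|_U$. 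Let $j$ be the largest integer such that the image of $\sA|_U$ lies in $F^j$; such a $j$ exists because $F^0 = \bigwedge^r \sE|_U$. Then the composition $\sA|_U \inj F^j \surj F^j/F^{j+1}$ is nonzero by the maximality of $j$, hence is injective since $\sA|_U$ is a line bundle. This gives an injection
\[ \textstyle \alpha\!: \sA|_U \inj \bigwedge^j \sF|_U \tensor \bigwedge^{r-j} \sG|_U. \]

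Since $\sF|_U$ and $\sG|_U$ are locally free, we have
\[ \textstyle \bigwedge^j \sF|_U \tensor \bigwedge^{r-j} \sG|_U = \bigl(\bigwedge^{[j]} \sF \tensor \bigwedge^{[r-j]} \sG\bigr)^{**}\big|_U, \]
so $\alpha$ can be viewed as an injection of $\sA|_U$ into the restriction of the reflexive sheaf $\bigl(\bigwedge^{[j]} \sF \tensor \bigwedge^{[r-j]} \sG\bigr)^{**}$ to the big open set $U$. Now Lemma \ref{lem:ext morphisms} applies: both source and target are reflexive on $X$, so $\alpha$ extends uniquely to a morphism $\overline{\alpha}$ on all of $X$, and by part \eqref{itm:extmor2} of that lemma the injectivity is preserved. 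This produces the desired inclusion $\sA \inj \bigl(\bigwedge^{[j]} \sF \tensor \bigwedge^{[r-j]} \sG\bigr)^{**}$.

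There is no real obstacle here; the only minor subtlety is making sure that all sheaves, including $\sA$ itself, are locally free on the open set we work over, so that the construction of the filtration and the identification of its associated graded pieces with tensor products of exterior powers are literally true and not merely true up to double-dualization. Shrinking $U$ once more via Lemma \ref{lem:sg locus of tor-free} handles this cleanly.
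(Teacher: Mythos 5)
Your argument is correct and follows essentially the same route as the paper: restrict to the big open set $U$, use the standard filtration of $\bigwedge^r \sE|_U$ induced by the short exact sequence to get a nonzero (hence injective) map from $\sA|_U$ into one of the graded pieces, and extend across $X$ by Lemma \ref{lem:ext morphisms}, with injectivity preserved by torsion-freeness of $\sA$. Your extra step of shrinking $U$ so that $\sA$ is locally free is harmless but not needed, since a nonzero map from a torsion-free rank-one sheaf into a locally free sheaf is already generically injective.
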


\begin{proof}
Applying \cite[Ch.~II, Ex.~5.16(d)]{Har77} to the sequence (\ref{eqn:ses|_U}), we see that $\bigwedge^r \ms E\big|_U$ has a filtration with quotients
\[ \phantom{0 \le j \le r \quad}
\textstyle \big( \bigwedge^j \ms F \tensor \bigwedge^{r-j} \ms G \big) \big|_U, \quad 0 \le j \le r. \]
Clearly there is a nonzero map $\ms A\big|_U \to \big( \bigwedge^j \ms F \tensor \bigwedge^{r-j} \ms G \big) \big|_U$ for some $j$. Since the sheaf on the right-hand side is locally free, this map is even generically injective. Extending it to all of $X$ by Lemma \ref{lem:ext morphisms}(\ref{itm:extmor1}), we get the desired map
\[ \textstyle \ms A \to \big(\bigwedge^{[j]} \ms F \tensor \bigwedge^{[r-j]} \ms G\big)^{**}. \]
It is injective because it is generically so and $\ms A$ is torsion-free.
\end{proof}

\begin{rem}[Passing to a power of a section] \label{rem:passing to a power} Let
  $f\!: Z \to X$ be a proper birational morphism of normal varieties, with
  exceptional locus $E$. Let $\sA$ be a Weil divisorial sheaf on $X$ and $\sB$ a
  Weil divisorial sheaf on $Z$ such that $\sB|_{Z \minus E} \isom \sA|_{X \minus
    f(E)}$. Let $\sigma \in H^0(X, \sA^{[k]})$ be a section, and choose a
  natural number $\ell$. 

  Since $\sB|_{Z \minus E} \isom \sA|_{X \minus f(E)}$, we can regard $f^*
  \sigma \in H^0 \bigl( Z, f^{[*]} (\sA^{[k]}) \bigr)$ as a rational section of
  $\sB^{[k]}$. Raising it to the $\ell$-th power, $(f^* \sigma)^\ell$ is a
  rational section of $(\sB^{[k]})^{[\ell]} = \sB^{[k \cdot \ell]}$. Similarly,
  pulling back $\sigma^\ell \in H^0(X, \sA^{[k \cdot \ell]})$ gives a rational
  section $f^* (\sigma^\ell)$ of $\sB^{[k \cdot \ell]}$. We claim that
  \begin{sequation}
    f^* (\sigma^\ell) = (f^* \sigma)^\ell
  \end{sequation}%
  as rational sections of $\sB^{[k \cdot \ell]}$. But this is clear since they
  agree on $Z \minus E$.
\end{rem}

\section{A Negativity lemma for bigness} \label{sec:neg lemma}

\subsection{Statement of result}

In this section, we prove a lemma about the negativity of exceptional divisors which extends the well-known statement of \cite[Lem.~3.6.2(1)]{BCHM10} and \cite[Lem.~3.39]{KM98}. Since our lemma deals with bigness instead of nefness, it is much better suited for discussing big subsheaves of the cotangent bundle. It also allows to give alternative proofs of the aforementioned lemmas which do not rely on deep results like Serre duality and Riemann--Roch for surfaces.

\begin{prp}[Negativity lemma for bigness] \label{prp:negativity}
Let $\pi\!: Y \to X$ be a proper birational morphism between normal quasi-projective varieties such that the $\pi$-exceptional set is of (not necessarily pure) codimension one. Then for any nonzero effective $\pi$-exceptional $\Q$-Cartier divisor $E$, there is a component $E_0 \subset E$ such that $-E\big|_{E_0}$ is $\pi\big|_{E_0}$-big.
\end{prp}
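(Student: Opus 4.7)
The plan is to strengthen the classical negativity lemma \cite[Lem.~3.39]{KM98}, which asserts that some component of $E$ is covered by $\pi$-contracted curves $C$ with $E \cdot C < 0$, to the bigness statement at hand. The natural approach is induction on $n = \dim X$.

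Since the conclusion is local over $X$, I would first restrict to an affine neighborhood of a generic point of $\pi(\supp E)$, so that $X$ is affine and $\pi$ is projective. The base case $n = 2$ is immediate: the $\pi$-exceptional components $E_i$ are curves, and the intersection form on them is negative definite by classical results on normal surface singularities. Writing $E = \sum a_i E_i$, one has $E^2 < 0$, hence $E \cdot E_i < 0$ for some $i$. Then $-E|_{E_i}$ has positive degree on the curve $E_i$, so it is ample, in particular big.

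For the induction step $n > 2$, suppose first that some component $E_i$ satisfies $\dim \pi(E_i) > 0$. I would take a very general hyperplane $H$ on $X$; by Bertini applied to a projective compactification, both $H$ and $Y' := \pi^{-1}(H)$ are normal, and $\pi|_{Y'}\!: Y' \to H$ is again proper birational between normal quasi-projective varieties of dimension $n-1$. The divisor $E|_{Y'}$ is nonzero effective $\pi|_{Y'}$-exceptional $\Q$-Cartier, so by the induction hypothesis there is a component $E'_0 \subset E|_{Y'}$ with $-E|_{Y'}|_{E'_0}$ being $(\pi|_{E'_0})$-big. Letting $E_0$ be the component of $E$ containing $E'_0$, a direct comparison shows that relative bigness descends: for a generic point $x \in \pi(E'_0) \subset \pi(E_0) \cap H$, the fiber $\pi|_{E_0}^{-1}(x)$ lies in $Y'$ and coincides with $\pi|_{E'_0}^{-1}(x)$, so relative bigness over $\pi|_{E_0}$ is tested by the same intersection numbers as over $\pi|_{E'_0}$.

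In the remaining ``punctual'' case, where every component $E_i$ of $E$ satisfies $\dim \pi(E_i) = 0$, the slicing argument fails: for general $H$ on $X$ avoiding the finitely many contracted points $\pi(E_i)$, one has $E|_{Y'} = 0$. Here I would localize at a single contracted point $p \in X$ and appeal to Grauert--Artin-type contractibility, which provides an effective $\Q$-Cartier divisor $Z$ supported on $\pi^{-1}(p)$ such that $-Z$ is $\pi$-ample; in particular, $-Z|_{E_i}$ is ample on each component $E_i \subset \pi^{-1}(p)$. Combining this ambient positivity with the classical negativity lemma should then yield bigness of $-E|_{E_0}$ on some component $E_0$.

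The main obstacle is expected to be the clean treatment of the punctual case, where dimensional reduction is unavailable and one must extract bigness from more indirect positivity data; additionally, the verification that relative bigness transfers through hyperplane restrictions in the inductive step requires careful handling when $E_0$ may be singular or when $\pi|_{E_0}$ has non-irreducible generic fibers.
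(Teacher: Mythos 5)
Your inductive scheme breaks down precisely where you anticipate trouble: in the ``punctual'' case. The assertion that Grauert--Artin-type contractibility supplies an effective $\Q$-Cartier divisor $Z$ supported on $\pi^{-1}(p)$ with $-Z$ relatively ample is unjustified and is in general false. Such a $Z$ does exist when $X$ is $\Q$-factorial at $p$ (take a $\pi$-ample effective $A$ on $Y$ and set $Z = \pi^*\pi_*A - A$, which is effective and exceptional by the classical negativity lemma), but the proposition makes no $\Q$-factoriality assumption, and when $\pi_*A$ fails to be $\Q$-Cartier this construction, and the claimed conclusion, collapse. Since removing the $\Q$-Cartier/$\Q$-factorial hypotheses is the whole point of this paper, this is not a removable technicality. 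Moreover, even granting such a $Z$, the phrase ``combining this ambient positivity with the classical negativity lemma'' omits the actual mechanism needed to select the component $E_0$: one must take $t_0 = \min\{t : tZ \ge E\}$, choose $E_0 \not\subset \supp(t_0Z - E)$, and write $-E|_{E_0}$ as (positive) $+$ (effective). Your inductive step in the non-punctual case is essentially sound, though the transfer of relative bigness through the hyperplane slice needs care when $E_0\cap Y'$ is reducible, since the induction hypothesis then only controls one component of the general fiber.

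The paper's proof is a single direct argument that handles both cases uniformly and needs no induction. One takes a general hyperplane section $H$ of $Y$, pushes it forward to $X$, and dominates $\pi_*H$ by a Cartier divisor $H_X$ with no excess components along $\supp \pi_*H$ (Lemma \ref{lem:Weil le Cartier}; this is the substitute for $\Q$-factoriality). The decomposition $\pi^*H_X = H + G + F$ produces an effective exceptional $F$ whose support is the \emph{entire} codimension-one exceptional locus and for which $-F$ is ($\pi$-ample) $+$ (effective with no exceptional components) $+$ ($\pi$-trivial). Applying the $t_0$-trick above with this $F$ in place of your $Z$ yields the component $E_0$ and the bigness of $-E|_{E_0}$ at once. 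If you want to salvage your approach, you should replace the appeal to Grauert--Artin by this construction of $F$ --- at which point the induction and the case distinction become unnecessary.
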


\begin{rem}
Recall that in the relative setting, a divisor is called $f$-big if its restriction to a \emph{general} fiber of $f$ is big. See \cite[Def.~3.1.1(7)]{BCHM10}. In particular, if $f$ is birational then any divisor is $f$-big.
\end{rem}

Before we can give the proof of Proposition \ref{prp:negativity}, we need a little preparation.

\begin{lem} \label{lem:Weil le Cartier}
Let $X$ be a normal quasi-projective variety and $D$ an effective integral Weil divisor on $X$. Then there is a Cartier divisor $D'$ on $X$ such that $D' \ge D$, and such that $\supp(D)$ and $\supp(D'-D)$ have no components in common.
\end{lem}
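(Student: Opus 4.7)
\begin{prf}[Proof plan for Lemma \ref{lem:Weil le Cartier}]
The plan is to construct $D'$ in the form $\dv(f) + mA$, where $A$ is a suitable ample Cartier divisor, $m \gg 0$ is chosen large enough, and $f$ corresponds to a sufficiently general global section $s$ of the reflexive rank-one sheaf $\O_X(mA - D)$. The requirement on $s$ is that it not vanish at the generic point of any component of $D$; once this is achieved, the remaining properties fall out formally.

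First I would reduce to the case where $X$ is projective. Since $X$ is quasi-projective and normal, it embeds as an open subvariety into a normal projective variety $\overline X$ (take any projective compactification and normalize; the normalization is an isomorphism above the already-normal locus $X$). Let $\overline D$ be the Zariski closure of $D$ in $\overline X$. If I produce a Cartier divisor $\overline{D'} \ge \overline D$ on $\overline X$ such that $\supp(\overline D)$ and $\supp(\overline{D'} - \overline D)$ share no components, then $D' := \overline{D'} \big|_X$ has all the required properties on $X$, since Cartierness, effectiveness, the inequality $D' \ge D$, and the disjointness of supports are all preserved under restriction to an open subvariety.

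Assuming now that $X$ is projective, pick an ample Cartier divisor $A$ on $X$. Apply Serre's theorem to the coherent sheaf $\O_X(-D)$: for $m \gg 0$, the reflexive sheaf
\[ \O_X(mA-D) \;=\; \O_X(-D) \tensor \O_X(mA) \]
is globally generated. Let $\eta_1, \dots, \eta_r$ be the generic points of the (finitely many) components $D_1, \dots, D_r$ of $\supp D$. Since $X$ is normal, $\O_X(mA-D)$ is a free $\O_{X,\eta_i}$-module of rank one at each $\eta_i$, so the fiber $\O_X(mA-D) \tensor k(\eta_i)$ is one-dimensional. Global generation then says that the evaluation map $H^0\bigl(X, \O_X(mA-D)\bigr) \to \O_X(mA-D) \tensor k(\eta_i)$ is nonzero, hence its kernel $V_i$ is a proper $\C$-subspace of $H^0$. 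Because $\C$ is infinite and $r$ is finite, the union $\bigcup_{i=1}^r V_i$ cannot exhaust $H^0\bigl(X, \O_X(mA-D)\bigr)$, so a section $s \notin \bigcup V_i$ exists.

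Finally, such an $s$ corresponds to a rational function $f$ on $X$ with $\dv(f) + mA - D \ge 0$. Setting $D' := \dv(f) + mA$ then gives an effective Cartier divisor (it is the divisor of $f$ viewed as a regular section of the line bundle $\O_X(mA)$) with $D' \ge D$. The non-vanishing condition $s(\eta_i) \ne 0$ translates precisely into $\mult_{D_i}(D' - D) = 0$ for all $i$, which is exactly the statement that no component of $\supp D$ appears in $\supp(D' - D)$. The proof presents no substantial obstacle; the only mildly subtle point is the reduction to the projective case, but this is a standard compactification argument.
\end{prf}
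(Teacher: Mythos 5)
Your proof is correct, but it takes a genuinely different route from the paper. After the same reduction to the projective case, the paper constructs $D'$ geometrically: it chooses a general finite projection $\pi\colon X \to L$ onto a linear subspace $L \subset \P^n$ with $\dim L = \dim X$, arranged so that no component $D_i$ of $D$ lies in the ramification locus and the images $\pi(D_i)$ are pairwise distinct, and then sets $D' = \sum a_i\, \pi^*\pi(D_i)$. This is Cartier because every divisor on the smooth target $L$ is Cartier and $\pi$ is finite, and the conditions on $\pi$ guarantee that each $\pi^*\pi(D_i) - D_i$ is effective with no component among the $D_j$. Your argument instead invokes Serre's global generation theorem for $\O_X(-D) \tensor \O_X(mA)$ together with prime avoidance at the generic points of the components of $D$, producing $D' = \dv(f) + mA$. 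Both arguments are sound; yours trades the mild general-position argument needed to construct $\pi$ (and the fact that Weil divisors on smooth varieties are Cartier) for a cohomological input, and it has the small virtue of making the passage to a projective compactification fully explicit, which the paper only asserts. The one point worth phrasing carefully in your write-up is that the fiber $\O_X(mA-D)\tensor k(\eta_i)$ is one-dimensional over the residue field $k(\eta_i)$, not over $\C$; this is harmless, since the kernel of the ($\C$-linear, nonzero) evaluation map is still a proper $\C$-subspace of $H^0$, which is all the prime-avoidance step needs.
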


\begin{proof}
We may assume that $X \subset \P^n$ is projective. Write $D = \sum a_i D_i$, where the $D_i$ are distinct prime divisors. Let $\pi$ be a finite surjective map from $X$ to a smooth space such that $D$ is not contained in the ramification locus of $\pi$ and $\pi(D_i) \ne \pi(D_j)$ for $i \ne j$. Here $\pi(D_i)$ denotes the reduced divisor associated with $\supp \pi_* D_i$.
For example, one may take $\pi$ to be a projection to a general linear subspace $L \subset \P^n$, where $\dim L = \dim X$. Now, choosing $D' = \sum a_i \pi^* \pi(D_i)$ proves the lemma.
\end{proof}

\subsection{Proof of Proposition \ref{prp:negativity}}

Let a map $\pi$ and a divisor $E$ be given as in the statement of Proposition \ref{prp:negativity}. Let $H \subset Y$ be a general hyperplane section, and let $\pi_* H$ denote its cycle-theoretic pushforward. By Lemma \ref{lem:Weil le Cartier}, we may choose a Cartier divisor $H_X \ge \pi_* H$ on $X$. We claim that there is a decomposition
\begin{sequation} \label{eqn:decomp}
\pi^* H_X = H + G + F
\end{sequation}where
\setitemize[1]{leftmargin=1.5em,parsep=0em,itemsep=0.125em,topsep=0.125em}
\begin{itemize}
\item the divisor $G$ is effective and contains no exceptional components, and
\item the divisor $F$ is effective $\pi$-exceptional and $\supp F = E_1 \cup \cdots \cup E_k$, where $E_1, \dots, E_k$ denote the codimension-one components of the $\pi$-exceptional set.
\end{itemize}
\setitemize[1]{leftmargin=*,parsep=0em,itemsep=0.125em,topsep=0.125em}
To get the decomposition (\ref{eqn:decomp}), set $G$ to be the strict transform $\pi^{-1}_{\;\;\,*}(H_X - \pi_* H)$, and set $F := \pi^* H_X - H - G$. It is clear that with these choices, the first point holds. The latter point holds because $\pi_* H$ contains the images of all the $E_i$, hence so does $H_X$.

Since $\supp E \subset \supp F$, we have $t F \ge E$ for sufficiently large $t \gg 0$. Therefore, we may define the following number:
\[ t_0 = \min \bigl\{ t \in \R \;\big|\; t F \ge E \bigr\} > 0. \]
Clearly $t_0 F - E \ge 0$, and by the choice of $t_0$, there is a component $E_i \not\subset \supp(t_0 F - E)$, so $(t_0 F - E)|_{E_i}$ is effective. Using (\ref{eqn:decomp}) to write
\[ -E = \underbrace{t_0 H}_{\text{$\pi$-ample}} + \underbrace{t_0 G + (t_0 F - E)}_{\text{effective, $\not\supset E_i$}} - \underbrace{\pi^*(t_0 H_X)}_{\text{$\pi$-trivial}} \]
and restricting to $E_i$, we see that $-E\big|_{E_i}$ is $\pi\big|_{E_i}$-big, as it is the sum of a $\pi\big|_{E_i}$-ample, an effective and a $\pi\big|_{E_i}$-trivial divisor. So the proposition holds with the choice of $E_0 := E_i$. \qed

\part{DIFFERENTIAL FORMS ON DLT PAIRS} \label{part:II}

\section{\texorpdfstring{\CC-pairs and \CC-differentials}{C-pairs and C-differentials}} \label{sec:C-pairs}

\emph{\CC-pairs}, first introduced by Campana \cite{Cam04} under the name of \emph{orbi\-foldes g\'eom\'etriques}, are a certain kind of pairs, which for example arise in the context of quotients by finite groups.
Here we will merely collect some definitions related to \CC-pairs which we use later.

\begin{dfn}[\CC-pair, \CC-multiplicity, see {\cite[Def.~2.2]{JK11}}] \label{dfn:C-pair}
A \emph{\CC-pair} is a pair $(X, D)$ where all the coefficients $a_i$ of $D = \sum_i a_i D_i$ are of the form $\frac{n_i-1}{n_i}$ for some $n_i \in \N^+ \cup \{ \infty \}$. We use the convention that $\frac{\infty-1}{\infty} := 1$.
The \emph{\CC-multiplicity} of $D_i$ in $(X, D)$ is defined to be $n_i$.
\end{dfn}

\begin{dfn}[Adapted morphism, see {\cite[Def.~2.7]{JK11}}] \label{dfn:adapted mor}
Let $(X, D)$ be a $\CC$-pair, with $D = \sum_i \frac{n_i-1}{n_i}D_i$. A
surjective morphism $\gamma: Y \to X$ from a normal variety
is called \emph{adapted} if the following holds.
\begin{enumerate}
\item For any number $i$ with $n_i < \infty$ and any irreducible divisor $E \subset Y$ that surjects onto $D_i$, the divisor $E$ appears in $\gamma^*(D_i)$ with multiplicity precisely $n_i$.
\item $\gamma$ is equidimensional.
\end{enumerate}
\end{dfn}

\begin{rem}[Pullback of Weil divisors]
In the setup of Definition \ref{dfn:adapted mor}, the pullback $\gamma^*(D_i)$ is defined without any \Q-Cartier assumption, because $\gamma$ is assumed to be equidimensional. This is explained in detail in \cite[Notation 2.5 and Convention 2.6]{JK11}.
\end{rem}

\begin{rem}[Existence of adapted morphisms]
Given a \CC-pair $(X, D)$, an adapted morphism exists by \cite[Prop.~2.9]{JK11}.
\end{rem}

\begin{ntn}
If $(X, D)$ is a $\CC$-pair, $\sigma \in \mathit\Gamma \bigl( X, \bigl( \Sym^{[m]} \Omega_X^p \bigr) (*\rp D.) \bigr)$ is any form, and $\gamma\!: Y \to X$ is a surjective morphism from a normal variety, then we denote by $\gamma^* \sigma \in \mathit\Gamma \bigl( Y, \bigl( \Sym^{[m]} \Omega_Y^p \bigr) (*\gamma^{-1} \rp D.) \bigr)$ the pullback of $\sigma$ as a differential form.
\end{ntn}

\begin{dfn}[\CC-differentials, see {\cite[Def.~3.5]{JK11}}] \label{dfn:C-diff}
If $(X, D)$ is a $\CC$-pair, we define the \emph{sheaves of \CC-differentials}
\[ \underbrace{ \Sym_{\CC}^{[m]} \Omega^p_X(\log D) }_{=:\sA } \subset
   \underbrace{ \bigl( \Sym^{[m]} \Omega_X^p \bigr) (*\rp D.)  }_{=: \sB} \]
on the level of presheaves as follows: if $U \subset X$ is open and
$\sigma \in \mathit\Gamma \bigl( U,\, \sB \bigr)$ any form, then $\sigma$ is a section of $\sA$ if and only if for any open
subset $U' \subset U$ and any adapted morphism $\gamma: V \to U'$, the
pull-back has at most logarithmic poles along $D_\gamma := \supp \gamma^* \rd D.$, and no
other poles elsewhere, i.e.
\[ \gamma^*(\sigma) \in \mathit\Gamma \bigl( V,\, \Sym^{[d]} \Omega^p_V(\log D_{\gamma} )\bigr). \]
\end{dfn}

\begin{rem}[see {\cite[Cor.~3.13, 3.14]{JK11}}]
The sheaves $\Sym_\CC^{[m]} \Omega_X^p(\log D)$ are reflexive, and we have inclusions
\[ \Sym^{[m]} \Omega_X^p(\log \rd D.) \subset \Sym_\CC^{[m]} \Omega_X^p(\log D) \subset \Sym^{[m]} \Omega_X^p(\log \rp D.). \]
For $m = 1$, the first inclusion is an equality. For $p = n = \dim X$, the definition boils down to
\begin{sequation} \label{eqn:souffl'e}
\Sym_\CC^{[m]} \Omega_X^n(\log D) = \mc O_X(mK_X + \rd mD.).
\end{sequation}
\end{rem}

Next we define the \emph{\CC-Kodaira--Iitaka dimension} of a sheaf of differentials. This notion makes sense on a \CC-pair $(X, D)$. It is a version of the usual Kodaira--Iitaka dimension which takes into account the fractional part of $D$.

\begin{dfn}[\CC-Kodaira--Iitaka dimension, see {\cite[Def.~4.3]{JK11}}] \label{dfn:kappa_C}
Let $(X, D)$ be a \CC-pair and let $\sA \subset \Sym_\CC^{[1]} \Omega_X^p(\log D)$ be a Weil divisorial subsheaf, for some number $p$. For any $m > 0$, there is a natural inclusion $\ms A^{[m]} \subset \Sym_\CC^{[m]} \Omega_X^p(\log D)$. Denote by $\Sym_\CC^{[m]} \ms A$ the saturation of $\ms A^{[m]}$ in $\Sym_\CC^{[m]} \Omega_X^p(\log D)$. Similarly to Definition \ref{dfn:kappa}, the sheaves $\Sym_\CC^{[m]} \ms A$ determine rational maps $\phi_m$ from $X$ to projective spaces if they have nonzero global sections. In this case, the \CC-Kodaira--Iitaka dimension $\kappa_\CC(\ms A)$ is defined to be $\max \big\{\!\dim \overline{\phi_m(X)}\big\}$. Otherwise, we set $\kappa_\CC(\ms A) = -\infty$.
\end{dfn}

\begin{rem} \label{rem:kappa_C ge kappa}
In the situation of Definition \ref{dfn:kappa_C}, we always have the obvious inequality $\kappa_\CC(\ms A) \ge \kappa(\ms A)$.
\end{rem}

\begin{rem}
Unlike the usual Kodaira dimension, $\kappa_\CC(\ms A)$ does not only depend on the sheaf $\ms A$, but also on the embedding of $\ms A$ as a subsheaf of $\Sym_\CC^{[1]} \Omega_X^p(\log D)$ and on the fractional part of $D$.
In order to emphasize the dependence on the fractional part of $D$, we often write $\Sym_\CC^{[1]} \Omega_X^p(\log D)$ instead of $\Omega_X^{[p]}(\log \rd D.)$, although technically there is no difference.
\end{rem}

\section{\texorpdfstring{Adjunction and residues on dlt \CC-pairs}{Adjunction and residues on dlt C-pairs}} \label{sec:dlt adjunction}

\subsection{The residue map for snc pairs}

One of the most important features of logarithmic differential forms on snc
pairs $(X, D)$ is the existence of the \emph{residue sequence}
\begin{sequation}\label{seq:snc res map}
  0 \to \Omega_X^p ( \log D - D_0 ) \to \Omega_X^p(\log D)
  \xrightarrow{\;\res_{D_0}\;} \Omega_{D_0}^{p-1}\bigl(\log
  (D-D_0)\big|_{D_0}\bigr) \to 0
\end{sequation}%
for any integer $p$ and any irreducible component $D_0 \subset D$
\cite[2.3.b]{EV92}. Residues have manifold applications in Algebraic
Geometry. Here we list just a few.
\begin{itemize}
\item The adjunction formula $(K_X + D_0)\big|_{D_0} = K_{D_0}$ for a smooth
  divisor $D_0$ in a smooth variety $X$ can be proven by restricting sequence
  (\ref{seq:snc res map}) to $D_0$ for the special values $D = D_0$ and $p =
  \dim X$. In fact, it is precisely this connection between adjunction and
  residues which we will pursue in this section.
\item In favorable cases, the residue sequence allows for induction on the
  dimension. For example, Esnault and Viehweg \cite[Cor.~6.4]{EV92} employed
  this scheme to deduce Kodaira--Akizuki--Nakano vanishing from Deligne's Hodge
  theory and the topological Andreotti--Frankel vanishing theorem.
\item In \cite[Part VI]{GKKP11}, the residue map was used as a test for
  logarithmic poles: if a form in $\Omega_X^p(\log D)$ has zero residue along
  $D_0$, then it does not have a pole along $D_0$.
\end{itemize}

\subsection{The restriction map for snc pairs}

In a similar vein, recall from \cite[2.3.c]{EV92} that there exists a
restriction map for logarithmic differentials,
\begin{sequation}\label{eqn:snc restr map}
  0 \to \Omega_X^p(\log D)(-D_0) \to \Omega_X^p ( \log D - D_0 )
  \xrightarrow{\restr_{D_0}} \Omega_{D_0}^p\bigl(\log (D-D_0)|_{D_0}\bigr) \to 0.
\end{sequation}%

\subsection{Main result of this section}

One cannot expect to have sequences like (\ref{seq:snc res map}) and
\eqref{eqn:snc restr map} for reflexive differentials on singular
pairs in general. However, in \cite[Thm.~11.7]{GKKP11} it was shown
that as long as $(X, D)$ is dlt, a three-term residue sequence for
$\Omega_X^{[p]}(\log \rd D.)$ exists and is exact in codimension
two. In Theorem \ref{thm:res of symm diff} we will extend this result
by showing that in the setting of dlt \CC-pairs, residue and
restriction maps even exist for symmetric \CC-differentials, if one
endows the boundary divisor $D_0$ with a suitable \CC-pair structure
using the \emph{different}.

\subsubsection{Definition of the different}

The different is a divisor on $D_0$ which has a clear geometric meaning, namely it makes the adjunction formula work in the singular setting.

\begin{prpdfn}[Different, see {\cite[Prop./Def.~16.5]{Kol92}} or {\cite[Sec.~4]{Kol11}}] \label{prpdfn:different}
Let $(X, D)$ be a pair, with $D$ a $\Q$-divisor, and let $S \subset X$ be a normal codimension-one subvariety which is not contained in $\supp(D)$. Assume that $K_X + S + D$ is \Q-Cartier at every codimension-one point of $S$. Then there is a canonically defined $\Q$-divisor on $S$, called the \emph{different} $\Diff_S(D)$, with the property that
\begin{sequation} \label{eqn:different}
(K_X + S + D)|_S \,\sim_\Q\, K_S + \Diff_S(D).
\end{sequation}%
If $(X, S + D)$ is log canonical, then $\Diff_S(D)$ is an effective divisor on $S$.
If $(X, S + D)$ is dlt, then the assumption on $K_X + S + D$ being \Q-Cartier at every codimension-one point of $S$ is automatically satisfied. \qed
\end{prpdfn}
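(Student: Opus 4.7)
The plan is to construct $\Diff_S(D)$ coefficient-by-coefficient at the prime divisors of $S$ using the $\Q$-Cartier hypothesis, then to verify the adjunction formula, effectiveness under log canonicity, and finally the automatic $\Q$-Cartier property in the dlt setting.

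First I would fix a prime divisor $P \subset S$ and work in a Zariski neighborhood of its generic point $\eta_P$, which is a codimension-two point of $X$. By hypothesis, some positive multiple $m(K_X + S + D)$ is Cartier near $\eta_P$. Restricting this Cartier divisor to $S$ gives a well-defined Cartier divisor class on a neighborhood of $P$ in $S$, and comparing it with $mK_S$ (a well-defined Weil divisor on the normal variety $S$) yields a rational number as the coefficient along $P$; dividing by $m$ defines the coefficient of $P$ in $\Diff_S(D)$. A standard verification shows that this is independent of $m$ and of the chosen local representatives, and that the resulting $\Q$-divisor satisfies the claimed adjunction formula $(K_X + S + D)|_S \sim_\Q K_S + \Diff_S(D)$.

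To establish effectiveness under the lc hypothesis, I would pass to a log resolution $f\!: Y \to X$ of $(X, S+D)$ such that the strict transform $S_Y$ of $S$ is smooth. Writing $K_Y + S_Y + D_Y + \sum c_i F_i \sim_\Q f^*(K_X + S + D)$, where $D_Y$ is the strict transform of $D$, the $F_i$ are $f$-exceptional, and all $c_i \le 1$ by log canonicity, I would restrict to $S_Y$ and use snc adjunction $K_{S_Y} = (K_Y + S_Y)|_{S_Y}$ to obtain an explicit formula for $\Diff_S(D)$ on the resolution side. A Negativity-lemma argument (in the spirit of Proposition \ref{prp:negativity intro}) applied to the $f|_{S_Y}$-exceptional contributions, combined with the bound $c_i \le 1$, then forces the pushed-down coefficients to be nonnegative.

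Finally, for the dlt assertion it suffices to observe that a log canonical (in particular dlt) pair has $K_X + S + D$ $\Q$-Cartier on all of $X$ by the very definition of a log pair, so a fortiori at every codimension-one point of $S$. The main technical obstacle is verifying that the coefficient-by-coefficient construction in the second paragraph assembles to a genuinely canonical global $\Q$-divisor, independent of all choices; once this canonicity is established, the adjunction formula is essentially tautological and effectiveness reduces to the standard discrepancy calculation on a log resolution carried out in \cite{Kol11}.
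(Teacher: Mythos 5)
The paper does not prove this statement: the \verb|\qed| at its end marks it as quoted from \cite[Prop./Def.~16.5]{Kol92} and \cite[Sec.~4]{Kol11}, so your sketch has to be measured against the standard argument there, and it has two genuine gaps. The first is in the construction itself. At the generic point $\eta_P$ of a prime divisor $P \subset S$, the local ring $\O_{S,\eta_P}$ is a discrete valuation ring, so every divisor near $\eta_P$ is principal and ``comparing the Cartier divisor class of $m(K_X+S+D)|_S$ with $mK_S$'' extracts no number at all; if you instead compare actual divisors of independently chosen local representatives, the answer depends on the choices. The point of the different --- which the paper stresses in the Remark right after the statement, since \eqref{eqn:different} only determines it up to $\sim_\Q$ --- is that it is an honest divisor, and the structure that makes it canonical is the Poincar\'e residue: a local generator of $\O_X\bigl(m(K_X+S+D)\bigr)$ near $\eta_P$ is a rational $m$-pluri-log-form with pole along $S$, its residue is a canonically associated rational section of $\omega_S^{[m]}$ near $\eta_P$, and the coefficient of $P$ in $\Diff_S(D)$ is $\tfrac1m$ times the resulting order of vanishing. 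You correctly identify canonicity as ``the main technical obstacle,'' but the idea that resolves it is exactly what is missing, and without it the coefficient in your second paragraph is not well defined.

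The second gap is in effectiveness. Pushing $\bigl(D_Y+\sum c_iF_i\bigr)\big|_{S_Y}$ forward to $S$, the coefficient of a prime divisor $W\subset S$ receives contributions $c_i\cdot\mult_T\bigl(F_i|_{S_Y}\bigr)$ from every divisor $T\subset F_i\cap S_Y$ dominating $W$; such $F_i$ are exactly the exceptional divisors whose center is the codimension-two subvariety $W\subset X$. Log canonicity gives $c_i\le 1$, which bounds these contributions from \emph{above}, whereas effectiveness requires a bound from \emph{below}: you must rule out contributing $F_i$ with $c_i<0$, and that exclusion is essentially the content of the statement being proved, not a consequence of ``$c_i\le1$ plus negativity.'' The actual proof localizes at the generic point of $W$, cuts to a surface by hyperplane sections, and computes the $c_i$ explicitly from the negative-definite intersection matrix of the (minimal) resolution, as in \cite[16.6]{Kol92}; a negativity statement does enter, but as the sign analysis of that linear system, not in the form of Proposition \ref{prp:negativity}, which is about bigness of $-E|_{E_0}$ and has no evident bearing here. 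Your last point, that the dlt clause is automatic, is correct and indeed trivial with the conventions of \cite{KM98}, where dlt already presupposes that $K_X+S+D$ is $\Q$-Cartier.
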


\begin{rem}
Note that the different is an honest divisor, even though (\ref{eqn:different}) only determines it up to \Q-linear equivalence.
\end{rem}

We give some examples of differents.

\begin{exm}[SNC pairs]\label{exm:516}
In the setting of Proposition/Definition \ref{prpdfn:different}, suppose that $(X, S)$ is snc (or more generally, that $S$ is Cartier in codimension two). Then $\Diff_S(D) = D|_S$, since $(K_X + S)|_S = K_S$.
\end{exm}

\begin{exm}[Quadric cone]\label{exm:491}
  Let $X \subset \P^3$ be the projective quadric cone with vertex $P$, and $L$ a
  ruling of $X$. Then 
  $$
  \omega_X = \O_X(-2), \omega_L = \O_L(-2P) \text{ and } \O_X(2L) = \O_X(1).
  $$
  Hence $\Diff_L(0) = \frac12 P$.
\end{exm}

\begin{exm}[DLT pairs]\label{exm:547}
  Let $X$ be a normal variety, let $D_0, D_1$ be distinct prime divisors, and
  let $n_1 \in \N^+ \cup \{ \infty \}$. Assume that the pair $(X, D_0 +
  \frac{n_1 - 1}{n_1} D_1)$ is dlt. Recall from \cite[Prop.~5.51]{KM98} that
  $D_0$ is normal. The different $\Diff_{D_0}(D_1)$ is thus defined. If $W$
  is any prime divisor on $D_0$, then an elementary but tedious calculation
  in the spirit of \cite[Prop.~16.6.3]{Kol92} shows that the coefficient $w$ of
  $W$ in $\Diff_{D_0}(D_1)$ is given as follows. Firstly, let $m$ be the index
  of $K_X + D_0$ at the general point of $W$. For the definition of index, see
  Definition \ref{dfn:index} below. Secondly, if $W \subset D_1$, let $n = n_1$,
  and if $W \not\subset D_1$, set $n = 1$. Then, using the convention that $m
  \cdot \infty = \infty$, we have
\[ w = 1 - \frac1{mn}. \]
\end{exm}

Note that Example \ref{exm:491} is a special case of Example \ref{exm:547}.

\subsubsection{Statement of main result of this section}

Using Proposition/Definition \ref{prpdfn:different}, we can formulate the theorem announced above.

\begin{thm}[Residues and restrictions of symmetric differentials]\label{thm:res of symm diff} 
  Let $(X, D)$ be a dlt \CC-pair and $D_0 \subset \rd D.$ a component of the
  reduced boundary. Recall from \cite[Cor.~5.52]{KM98} that $D_0$ is normal,
  and set $D_0^c := \Diff_{D_0}(D - D_0)$. Then the pair $(D_0, D_0^c)$ is also
  a dlt \CC-pair, and the following holds.
  \begin{enumerate}
  \item\label{itm:6.12.1} For any integer $p \ge 1$, there is a map
    \[ 
    \res_{D_0}^k\!: \Sym_\CC^{[k]} \Omega_X^p(\log D) \to \Sym_\CC^{[k]}
    \Omega_{D_0}^{p-1}(\log D_0^c)
    \]
    which generically coincides with the $k$-th symmetric power of the residue
    map \eqref{seq:snc res map} in the following sense. More precisely, setting
    \[
    U := (X, \rp D.)_\snc \minus \supp \{ D \}, \; D^\circ := D \cap U, \;
    \text{and } D_0^\circ := D_0 \cap U, 
    \]
    and recalling from Example \ref{exm:516} that $D_0^c|_{D_0^\circ} = (D^\circ
    - D_0^\circ)|_{D_0^\circ}$, we have a commutative diagram
    \[ 
    \xymatrix{
      \Sym_\CC^{[k]} \Omega_X^p(\log D)\big|_U \ar[rr]^-{\res_{D_0}^k\bigr|_U} \ar@{=}[d] & & \Sym_\CC^{[k]} \Omega_{D_0}^{p-1}(\log D_0^c)\big|_{D_0^\circ} \ar@{=}[d] \\
      \Sym^{k} \Omega_U^p(\log D^\circ) \ar[rr]^-{\Sym^k \!\res_{D_0^\circ}} & & \Sym^{k} \Omega_{D_0^\circ}^{p-1}\bigl(\log (D^\circ - D_0^\circ)\big|_{D_0^\circ}\bigl). \\
    }
    \]
  \item\label{itm:6.12.2} For any integer $p \ge 0$, there is a map
    \[
    \restr_{D_0}^k\!: \Sym_\CC^{[k]} \Omega_X^p(\log D - D_0) \to \Sym_\CC^{[k]} \Omega_{D_0}^p(\log D_0^c) 
    \]
    which, in the same sense as above, generically coincides with the $k$-th
    symmetric power of the restriction map~\eqref{eqn:snc restr map}.
  \end{enumerate}
\end{thm}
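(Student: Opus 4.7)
The plan is to break the statement into three parts: (a) verify that $(D_0, D_0^c)$ is a dlt \CC-pair, (b) define the two maps on a suitable big open subset of $D_0$ using the snc situation, and (c) extend them uniquely across the remaining codimension by reflexivity, with the \CC-cover language bridging the gap over the non-snc locus.

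For (a), the dlt property of $(D_0, D_0^c)$ is standard adjunction for dlt pairs \cite[Cor.~5.52]{KM98} together with the well-behavedness of the different. To check the \CC-pair condition, we test it coefficient by coefficient: for a prime divisor $W \subset D_0$, the formula recalled in Example \ref{exm:547} gives the coefficient of $W$ in $\Diff_{D_0}(D - D_0)$ as $1 - \tfrac{1}{mn}$, where $m$ is the local index of $K_X + D_0$ at $W$ and $n \in \N^+ \cup \{\infty\}$ is either $1$ or the \CC-multiplicity of the component of $D - D_0$ containing $W$. Since $mn \in \N^+ \cup \{\infty\}$, each coefficient has the shape required by Definition \ref{dfn:C-pair}, and the claimed \CC-multiplicity on $D_0$ is $mn$.

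For (b), on the open set $U = (X, \rp D.)_\snc \minus \supp\{D\}$ the divisor $D^\circ$ is reduced and snc, so the classical sequences \eqref{seq:snc res map} and \eqref{eqn:snc restr map} are available. Taking $k$-th symmetric powers of the residue and restriction morphisms yields maps into $\Sym^k \Omega_{D_0^\circ}^{p-1}(\log(D^\circ - D_0^\circ)|_{D_0^\circ})$ and $\Sym^k \Omega_{D_0^\circ}^p(\log (D^\circ - D_0^\circ)|_{D_0^\circ})$ respectively. By Example \ref{exm:516}, $D_0^c|_{D_0^\circ} = (D - D_0)|_{D_0^\circ}$, so these match the targets in the asserted commutative diagram on $D_0^\circ$.

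For (c), the target $\Sym_\CC^{[k]} \Omega_{D_0}^{p-1}(\log D_0^c)$ is reflexive, so by Lemma \ref{lem:ext morphisms} it suffices to extend the maps to a big open subset $D_0^{\mr{big}} \subset D_0$. However, $D_0^\circ$ itself is typically only an open dense but not big subset of $D_0$, since $D_0$ meets the non-snc locus of $(X, \rp D.)$ in a divisor. To cover a generic point $w$ of a prime divisor $W \subset D_0$ lying outside $D_0^\circ$, we pick an adapted morphism $\gamma\!: Y \to X$ in the sense of Definition \ref{dfn:adapted mor}, étale-locally around $w$, followed by a log resolution if necessary. By Definition \ref{dfn:C-diff}, any section $\sigma$ of $\Sym_\CC^{[k]} \Omega_X^p(\log D)$ pulls back under $\gamma$ to an honest logarithmic symmetric pluri-form on $Y$, to which the snc residue and restriction sequences \eqref{seq:snc res map}, \eqref{eqn:snc restr map} apply. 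Invariance of $\gamma^*\sigma$ under the Galois action on $Y$, combined with the ramification identity $\gamma^*(K_X + D_0) = K_Y + \gamma^{-1}_* D_0$ in the relevant codimension, implies that the resulting residue on $Y$ is itself a Galois-invariant \CC-differential on the preimage of $D_0$ with logarithmic poles along the preimage of $D_0^c$, and therefore descends to a section of $\Sym_\CC^{[k]} \Omega_{D_0}^{p-1}(\log D_0^c)$ in a neighbourhood of $w$. The same argument works for the restriction map. Taking the union of these local extensions with $D_0^\circ$ produces a big open subset of $D_0$ on which both maps are defined; reflexivity extends them to all of $D_0$, and the resulting maps agree with the $k$-th symmetric power of the snc residue and restriction on $D_0^\circ$ by construction.

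The main obstacle I expect is step (c): one has to check carefully that the adapted cover $\gamma$ admits, in codimension two around the non-snc points of $(X, \rp D.)$ lying on $D_0$, a local model whose singularities are mild enough for the snc residue map on $Y$ to be meaningful (e.g. a quotient of an snc situation by a finite group acting compatibly with $D_0$), and that the resulting residue is indeed Galois-invariant and has exactly the pole profile demanded by Definition \ref{dfn:C-diff} on $(D_0, D_0^c)$. This is the point where the formula for the different from Example \ref{exm:547} and the codimension-two structural analysis of dlt pairs enter in an essential way.
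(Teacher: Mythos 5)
Your proposal follows essentially the same route as the paper: define the maps via the snc residue/restriction on the snc locus, then verify the \CC-differential condition in the remaining codimension by pulling back along adapted covers with snc total space and descending. The technical core you flag as the ``main obstacle'' in step (c) is exactly what the paper supplies via its codimension-two quotient structure of dlt pairs (Proposition \ref{prp:dlt quot}) and the construction of adapted morphisms whose restriction to the preimage of $D_0$ is adapted for $(D_0, D_0^c)$ (Proposition \ref{prp:dlt adapt}), with the descent packaged through the criterion of \cite[Cor.~3.16]{JK11} rather than explicit Galois invariance.
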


We will prove Theorem \ref{thm:res of symm diff} in Section \ref{subsec:652}, after some auxiliary results.


\subsection{The codimension-two structure of dlt pairs}

The proof of Theorem \ref{thm:res of symm diff} draws on an analysis
of the local structure of dlt pairs. Proposition \ref{prp:dlt quot}
asserts that along the reduced boundary, dlt pairs have quotient
singularities in codimension two.  Before we can state this
proposition, we need an auxiliary definition.

\begin{dfn}[Index] \label{dfn:index}
Let $X$ be a normal variety and $D$ an integral Weil divisor on $X$. For any point $P \in X$, we denote by $i_D(P)$ or by $i(P, D)$ the \emph{index} of $D$ in $P$, i.e.
\[ i_D(P) := i(P, D) := \inf \{ \, n \in \N^+ \;|\; \text{$nD$ is Cartier at $P$} \, \} \in \N^+ \cup \{ \infty \}. \]
\end{dfn}

\begin{lem}[Semicontinuity of index] \label{lem:index semicont}
In the setting of Definition \ref{dfn:index}, the function
\[ \begin{array}{cccl}
i_D\!: & X & \to     & \N^+ \cup \{ \infty \}, \\
       & P & \mapsto & i_D(P),
\end{array} \]
is upper semicontinuous.
\end{lem}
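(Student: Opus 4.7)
The proof is almost entirely a formal consequence of the fact that local freeness of coherent sheaves is an open condition. Recall that upper semicontinuity of $i_D$ means that for every $c \in \N^+ \cup \{\infty\}$, the sublevel set $\{P \in X : i_D(P) < c\}$ is open; equivalently, $\{P : i_D(P) \le n\}$ is open for each $n \in \N^+$. I would reduce matters to showing that for each fixed integer $n \ge 1$, the set
\[
U_n := \{P \in X : nD \text{ is Cartier at } P\}
\]
is open in $X$.

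The openness of $U_n$ is standard: $nD$ is Cartier at $P$ if and only if the reflexive rank-one sheaf $\O_X(nD)$ is invertible in some Zariski open neighborhood of $P$, and invertibility of a coherent sheaf is by definition (and also by the Nakayama-type criterion for local freeness of a coherent sheaf) an open condition on $X$. Concretely, if $\O_X(nD)_P$ is free of rank one, a generator lifts to a section over a neighborhood which trivializes $\O_X(nD)$ there, so $nD$ is principal, in particular Cartier, throughout that neighborhood.

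Given openness of each $U_n$, I would conclude by noting that
\[
\bigl\{P \in X : i_D(P) \le n\bigr\} \;=\; \bigcup_{m=1}^{n} U_m,
\]
which is open as a finite union of open sets. Equivalently, $\{P : i_D(P) \ge n+1\}$ is closed for every $n$, and
\[
\{P : i_D(P) = \infty\} \;=\; \bigcap_{n \in \N^+} \{P : i_D(P) \ge n\}
\]
is closed as well, giving upper semicontinuity on all of $\N^+ \cup \{\infty\}$.

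There is no real obstacle here; the only point that requires any attention is the identification of ``$nD$ Cartier at $P$'' with ``$\O_X(nD)$ locally free in a neighborhood of $P$,'' which follows from the definition of a Cartier divisor and the reflexivity of $\O_X(nD)$. No assumption on $X$ beyond normality is needed.
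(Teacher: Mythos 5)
Your proof is correct and follows essentially the same route as the paper: the whole content is that the locus where $nD$ is Cartier is open, which the paper states in one line and you justify slightly more explicitly via local freeness of $\O_X(nD)$ being an open condition. The reduction to the sets $U_n$ and the identification $\{i_D \le n\} = \bigcup_{m\le n} U_m$ are exactly what the paper's one-line argument implicitly uses.
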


\begin{proof}
If $i_D(P) = n < \infty$, then $nD$ is Cartier in a neighborhood $U$ of $P$, hence $i_D(Q) \le n$ for all $Q \in U$.
\end{proof}

\begin{figure}[t]
  \centering
  \[
  \xymatrix{
    *++{\begin{tikzpicture}(4,4)(0,0)
        \draw (0,0) to [out=90, in=180] (2,2);
        \draw (2,2) to [out=0, in=90] (6,0);
        \fill (4.75,1.8) node[right]{\scriptsize $X$};
        \draw (6,0) to [out=-90, in=-30] (3,-1);
        \draw (3,-1) to [out=150, in=-90] (0,0);
        \draw[line width = .1, fill=black!20] (4.1,.35) circle (.5);
        \draw (1,.25) to [out=20, in=160] (5,0); 
        \draw (3.9,-.9) to [out=80, in=-90] (4.1,1.5);
        \fill (4.85,.02) node[below]{\scriptsize $\rd D.$};
        \fill (4,-.9) node[above left]{\scriptsize $\rd D.$};
        \draw[dashed] (2.5,-.55) to [out=80, in=-80] (2.5,1.65); 
        \fill (2.5,1.45) node[right]{\scriptsize $\{ D \}$};
        \draw[line width = .1] (1.3,.34) circle (.5);
        \draw[line width = .1] (2,.5) circle (.5);
        \draw[line width = .1] (2.7,.55) circle (.5);
        \draw[line width = .1] (3.4,.5) circle (.5);
        \fill (4.3,.6) node[above right]{\scriptsize $U_\alpha$};
        \draw[line width = .1] (4.8,-.02) circle (.5);
        \draw[line width = .1] (3.8,-.5) circle (.5);
        \draw[line width = .1] (3.95,1.2) circle (.5);
    \end{tikzpicture}
    }
    & & &
    *++{\begin{tikzpicture}(4,4)(0,0)
        \draw (0,.8) to [out=90, in=180] (1.5,1.8);
        \draw (1.5,1.8) to [out=0, in=90] (3,.8);
        \draw (3,.8) to [out=-90, in=-30] (2,-.2);
        \draw (2,-.2) to [out=150, in=-90] (0,.8);
        \fill (2.3,1.8) node[right]{\scriptsize $V_\alpha$};
        \draw (0,.8) to [out=20, in=160] (2.98,.6);
        \draw (1.5,-.01) to [out=80, in=-80] (1.5,1.8);
        \fill (1.52,.83) node[below right]{\scriptsize $\gamma_\alpha^* \rd D.$};
    \end{tikzpicture}
    }
    \ar[lll]_-{\txt<6pc>{\scriptsize $\gamma_\alpha$, branches only along codim 2 sets}}
      ^-{\text{contained in $\rd D.$}}
  }
  \]
  \caption{Situation of Proposition \ref{prp:dlt quot}} \label{fig:dlt quot}
\end{figure}
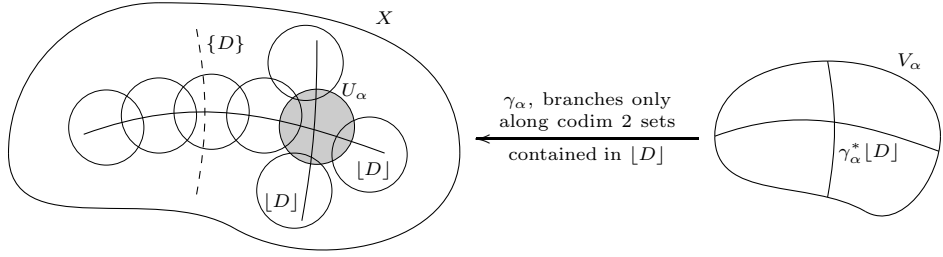

\begin{prp}[Codimension-two structure of dlt pairs along the reduced boundary] \label{prp:dlt quot}
Let $(X, D)$ be a dlt pair with $\rd D. \ne 0$. Then there exists a closed subset $Z \subset X$ with $\codim_X Z \geq 3$ such that $X \minus Z$ is \Q-factorial, and a covering of $\supp \rd D. \minus Z$ by finitely many Zariski-open subsets $(U_\alpha)_{\alpha \in I}$ of $X \minus Z$ which admit finite cyclic Galois covering maps
\[ \gamma_\alpha\!: V_\alpha \to U_\alpha \]
satisfying the following properties for all $\alpha \in I$.
\begin{enumerate}
\item\label{itm:dlt quot.1} The pair $\bigl( V_\alpha, \gamma_\alpha^*\rd D. \bigr)$ is quasi-projective, reduced, and snc.
\item\label{itm:dlt quot.2} If $P \in U_\alpha$ is any point such that $i(P, K_X + \rd D.) = 1$, then $\gamma_\alpha$ is \'etale over $P$. In particular, $\gamma_\alpha$ is \'etale in codimension one.
\item\label{itm:dlt quot.3} If $P \in U_\alpha$ is any point such that $i(P, K_X + \rd D.) > 1$, then $\deg \gamma_\alpha = i(P, K_X + \rd D.)$ and $\gamma_\alpha$ is totally branched over $P$.
\item\label{itm:dlt quot.4} The branch locus of $\gamma_\alpha$ is contained in $\supp \rd D.$.
\end{enumerate}
\end{prp}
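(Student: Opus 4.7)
The plan is to analyze the analytic-local structure of a dlt pair $(X, D)$ at codimension-two points of $\supp \rd D.$ and then globalize via quasi-compactness. I will build up the set $Z$ in several stages, each time removing a closed subset of codimension at least three.

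First I throw away an initial closed subset of codimension $\ge 3$ to simplify the geometry. Two properties should hold on the complement: \emph{(a)} each component of $\rd D.$ is smooth, any two distinct components meet transversely along a smooth codimension-two subvariety, and no triple intersections occur; \emph{(b)} $X$ is \Q-factorial. For \emph{(a)}, each component of $\rd D.$ is normal by \cite[Cor.~5.52]{KM98}, so its singular locus has codimension $\ge 2$ in the component, hence $\ge 3$ in $X$; triple intersections and non-transverse pairwise meetings constitute a small subset of $\rd D.$ because the non-snc locus of $(X, \rd D.)$ has codimension $\ge 2$ in $\rd D.$. For \emph{(b)}, dlt pairs have rational singularities \cite[Thm.~5.22]{KM98}, and a two-dimensional normal rational singularity is automatically \Q-factorial, so the non-\Q-factorial locus of $X$ has codimension $\ge 3$.

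Next I exploit the upper semicontinuity of the index function $i_P := i(P, K_X + \rd D.)$ established in Lemma \ref{lem:index semicont}. The index takes value $1$ on the snc locus of $(X, \rd D.)$, and by the classification of two-dimensional dlt germs along the reduced boundary it is bounded at the remaining codimension-two points; hence on $X \minus Z$, after possibly enlarging $Z$ again, it takes only finitely many values $n_1 < \cdots < n_k$. Using quasi-compactness, I cover $\supp \rd D. \minus Z$ by finitely many Zariski-open sets $U_\alpha \subset X \minus Z$, each equipped with an integer $n_\alpha$ such that $i_P \le n_\alpha$ on $U_\alpha$ with equality somewhere; after further shrinking, I may arrange $n_\alpha(K_X + \rd D.)|_{U_\alpha} \sim 0$. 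On each $U_\alpha$ I now construct $\gamma_\alpha\colon V_\alpha \to U_\alpha$ as the cyclic index-one cover associated with this trivialization; it is a finite cyclic Galois cover of degree $n_\alpha$. Standard properties of such covers yield \head{\ref{itm:dlt quot.2}}, \head{\ref{itm:dlt quot.3}} and \head{\ref{itm:dlt quot.4}} in one step: the cover is \'etale precisely over the Cartier locus of $K_X + \rd D.$, i.e.\ where $i_P = 1$; at any $P$ with $i_P > 1$ it is totally branched with ramification index $i_P$, which by maximality of $n_\alpha$ forces $i_P = n_\alpha$; and the branch locus lies in the non-Cartier locus of $K_X + \rd D.$, which after step one is contained in $\supp \rd D.$ because $K_X$ is Cartier off $\supp \rd D.$.

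The main obstacle is \head{\ref{itm:dlt quot.1}}, that $(V_\alpha, \gamma_\alpha^* \rd D.)$ is snc (quasi-projectivity and reducedness being formal consequences of the construction). By \head{\ref{itm:dlt quot.2}} combined with step one the map $\gamma_\alpha$ is \'etale in codimension one, so the pullback pair is again dlt, and on $V_\alpha$ the divisor $K_{V_\alpha} + \gamma_\alpha^*\rd D.$ is Cartier by the defining property of the index-one cover. A codimension-two analysis of dlt pairs with Cartier log canonical divisor along the reduced boundary---essentially the classification of two-dimensional dlt germs of index one, in the spirit of \cite[Sec.~9]{GKKP11}---shows that such a pair is analytically locally isomorphic to $(\A^2, \{xy = 0\})$, to $(\A^2, \{x = 0\})$, or smooth with empty boundary, all of which are snc. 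Hence $(V_\alpha, \gamma_\alpha^*\rd D.)$ is snc in codimension two, and I finish by enlarging $Z$ to include the images of the non-snc loci of the finitely many $V_\alpha$; these are closed of codimension $\ge 3$ in $X$ because $\gamma_\alpha$ is finite.
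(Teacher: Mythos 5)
Your overall strategy --- reduce to a codimension-two analysis, use semicontinuity of the index, construct local cyclic index-one covers, and verify snc-ness of the pullback pair via the classification of index-one dlt germs --- is the same as the paper's, which outsources the construction of the covers to \cite[Cor.~9.14]{GKKP11}. But there is a genuine gap in your argument for property (\ref{prp:dlt quot}.\ref{itm:dlt quot.3}). You choose $U_\alpha$ with $i_P \le n_\alpha$ on $U_\alpha$ and equality attained somewhere, and then assert that at any $P$ with $i_P > 1$ the cover is totally branched with ramification index $i_P$, ``which by maximality of $n_\alpha$ forces $i_P = n_\alpha$''. This is a non sequitur: maximality of $n_\alpha$ does not prevent $U_\alpha$ from containing points of intermediate index $1 < i_P < n_\alpha$. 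Over such a point the degree-$n_\alpha$ cyclic cover attached to a trivialization of $n_\alpha(K_X + \rd D.)$ has a fibre of $n_\alpha/i_P > 1$ points; it is neither totally branched there nor of degree $i_P$, so (\ref{prp:dlt quot}.\ref{itm:dlt quot.3}) fails. The locus $\{\, P : 1 < i_P < n_\alpha \,\}$ can be a dense open subset of a codimension-two component of $X_\sg$, so it cannot simply be absorbed into $Z$. What is needed --- and what the paper arranges in its Assumption \ref{awlog:simplif2} --- is that the index be \emph{constant} along the codimension-two singular stratum inside each $U_\alpha$: one treats one irreducible component $W$ of the codimension-two singular locus at a time, notes that the index attains its minimum $i_0$ along $W$, that $\{ i_P \le i_0 \}$ is open by semicontinuity, and that $W \cap \{ i_P > i_0 \}$ is a \emph{proper} closed subset of $W$, hence of codimension $\ge 3$ in $X$ and admissible for $Z$. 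Note also that your normalization $n_\alpha(K_X+\rd D.)|_{U_\alpha} \sim 0$ already presupposes $i_P \mid n_\alpha$ for every $P \in U_\alpha$, which the maximum of the index need not satisfy.

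Two smaller points. Your justification of (\ref{prp:dlt quot}.\ref{itm:dlt quot.4}), namely that ``$K_X$ is Cartier off $\supp \rd D.$'', is false for general dlt pairs: $X$ may have non-Gorenstein klt singularities along codimension-two strata disjoint from the boundary. The correct reduction is that such components of the non-Cartier locus of $K_X + \rd D.$ meet $\supp \rd D.$ in codimension $\ge 3$ and can therefore be avoided by the $U_\alpha$, which only need to cover $\supp \rd D. \minus Z$ (cf.\ the Warning following the proposition). Finally, the step ``rational singularities in dimension two are $\Q$-factorial, hence the non-$\Q$-factorial locus has codimension $\ge 3$'' is more delicate than stated, since $\Q$-factoriality does not localize analytically or under hyperplane sections; this is exactly \cite[Prop.~9.1]{GKKP11}, which is what the paper invokes.
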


For a visualization of the statement of Proposition \ref{prp:dlt quot}, see Figure \ref{fig:dlt quot}.

\begin{rem} \label{rem:554}
Given any index $\alpha$, items (\ref{prp:dlt quot}.\ref{itm:dlt quot.1})--(\ref{prp:dlt quot}.\ref{itm:dlt quot.3}) together imply that the branch locus of each $\gamma_\alpha$ is exactly the singular locus $U_{\alpha, \sg}$. Furthermore, the index of $K_X + \rd D.$ is constant along each of the sets $U_{\alpha, \sg}$.
Item (\ref{prp:dlt quot}.\ref{itm:dlt quot.4}) then implies that $U_{\alpha, \sg} \subset \supp \rd D.$.
\end{rem}

\begin{warn}
In Proposition \ref{prp:dlt quot}, we only claim that $\supp \rd D. \minus Z \subset \bigcup_{\alpha \in I} U_\alpha$. We do not claim that the $U_\alpha$ cover $X$ or $X \minus Z$.
\end{warn}

\subsubsection{Proof of Proposition \ref{prp:dlt quot}}

We start by removing some codimension-three subsets from $X$. This allows us to make additional simplifying assumptions. First of all, by \cite[Prop.~9.1]{GKKP11} a dlt pair is always $\Q$-factorial in codimension two, so we may make the following assumption.
\begin{awlog} \label{awlog:simplif0}
$X$ is $\Q$-factorial. In particular $i(P, K_X + \rd D.) < \infty$ for all $P \in X$.
\end{awlog}
Next, we may remove the irreducible components of $X_\sg$ which have codimension $\ge 3$ in $X$, as well as those that are not contained in $\supp \rd D.$. Furthermore, removing the singular locus of $X_\sg$, we may assume that $X_\sg$ itself is smooth. Covering the remaining components of $X_\sg$ one at a time, we may thus assume the following.
\begin{awlog} \label{awlog:simplif}
$X_\sg$ is irreducible, of codimension two, and contained in $\supp \rd D.$.
\end{awlog}
Consider $i_0$, the minimum of the function $P \mapsto i(P, K_X + \rd D.)$ along $X_\sg$, and the set
\[ U = \bigl\{ P \in X \;\big|\; i(P, K_X + \rd D.) \le i_0 \bigr\}, \]
which is open by Lemma \ref{lem:index semicont}. It is clear that on $X_\sg \cap U$, the function $i(\cdot, K_X + \rd D.)$ is constant with value $i_0$. Note that $X_\sg \minus U$, being a proper closed subset of $X_\sg$, has codimension $\ge 3$ in $X$. Hence removing it, we may additionally assume the following.
\begin{awlog} \label{awlog:simplif2}
The index of $K_X + \rd D.$ is constant along $X_\sg$.
\end{awlog}

Now we apply \cite[Cor.~9.14]{GKKP11} to the pair $(X, D)$, which gives us the desired maps $\gamma_\alpha$. It remains to check that these have the required properties. For this, note that the $\gamma_\alpha$ are constructed as local index one covers for $K_X + \rd D.$. So it is immediate that they are cyclic Galois and that (\ref{prp:dlt quot}.\ref{itm:dlt quot.2}) holds. Claim (\ref{prp:dlt quot}.\ref{itm:dlt quot.1}), as well as the finiteness of the index set $I$, is then a consequence of \cite[Cor.~9.14.1]{GKKP11}.

To obtain (\ref{prp:dlt quot}.\ref{itm:dlt quot.3}), note that by the additional assumption \eqref{awlog:simplif2} on the index of $K_X + \rd D.$, a local index one cover $\gamma_\alpha\!: V_\alpha \to U_\alpha$ for $K_X + \rd D.$ with respect to a point $P \in X_\sg$ is also an index one cover with respect to any other point $Q \in X_\sg \cap U_\alpha$. It follows by \cite[Prop.~in (3.6) on p.~362]{Rei87} that the $\gamma_\alpha$-preimage of any point in $X_\sg \cap U_\alpha$ consists of a single point.

As for (\ref{prp:dlt quot}.\ref{itm:dlt quot.4}), the branch locus of $\gamma_\alpha$ is contained in $X_\sg$, and $X_\sg \subset \supp \rd D.$ by Assumption (\ref{awlog:simplif}). \qed

\subsection{Adjunction on dlt \CC-pairs}

We show that the class of dlt \CC-pairs is ``stable under adjunction'', and that along the reduced boundary, a dlt \CC-pair can be covered by adapted morphisms in a way compatible with the adjunction formula.

The following two propositions will be shown in Sections \ref{subsubsec:prf dlt adjunction} and \ref{subsubsec:prf dlt adapt}, respectively.

\begin{prp}[Adjunction for dlt \CC-pairs] \label{prp:dlt adjunction}
Let $(X, D)$ be a dlt \CC-pair, and let $D_0 \subset \rd D.$ be a component of the reduced boundary. Then:
\begin{enumerate}
\item\label{itm:6.8.1} The divisor $D_0$ is normal.
\item\label{itm:6.8.1+eps/2} Setting $D_0^c := \Diff_{D_0}(D - D_0)$, then $(D_0, D_0^c)$ is a dlt \CC-pair.
\item\label{itm:6.8.1+eps} If $W$ is any irreducible component of the non-snc locus of $(X, \rp D.)$ such that $\codim_X W = 2$ and $W$ is contained in $D_0$, then $W \subset \supp \rp D_0^c.$.
\item\label{itm:6.8.2} We have $\bigl(\rd D. - D_0\bigr)\big|_{D_0} = \rd D_0^c.$.
\end{enumerate}
\end{prp}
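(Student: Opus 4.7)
All four items are local in codimension one on $D_0$, and by \cite[Prop.~9.1]{GKKP11} I may freely replace $X$ by a $\Q$-factorial open subset. The normality claim (\ref{itm:6.8.1}) is precisely \cite[Cor.~5.52]{KM98}, applied to the dlt (in particular log canonical) pair $(X, D)$. For the dlt half of (\ref{itm:6.8.1+eps/2}), I invoke the classical dlt adjunction theorem of \cite[Sec.~4]{Kol11}; monotonicity of discrepancies, which guarantees that $(X, D')$ remains dlt for every $0 \le D' \le D$, will be useful below.

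For the C-pair claim in (\ref{itm:6.8.1+eps/2}) and for item (\ref{itm:6.8.2}), fix a prime divisor $W \subset D_0$ and shrink $X$ around its generic point $\eta_W$. A routine discrepancy calculation on the blowup of $W$ shows that dlt forces at most one component $D_1 \subset \supp(D - D_0)$ to contain $W$ in codimension one, so Example \ref{exm:547} applies (if no such $D_1$ exists, we degenerate the formula by $n_1 = 1$) and the coefficient $w$ of $W$ in $D_0^c$ equals $1 - \frac{1}{mn}$, where $m = i(W, K_X + D_0) \in \N^+$ and $n \in \N^+ \cup \{\infty\}$ is either the C-multiplicity $n_1$ of the unique extra component through $W$ or $1$. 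Thus $mn \in \N^+ \cup \{\infty\}$ and $w = \frac{mn-1}{mn}$, which verifies the C-pair property. Since $m$ is finite, item (\ref{itm:6.8.2}) follows at once: $w = 1$ iff $n = \infty$, iff $W \subset \supp(\rd D. - D_0)$; hence the reduced divisors $\rd D_0^c.$ and $(\rd D. - D_0)\big|_{D_0}$ have the same prime components and therefore coincide.

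The main technical obstacle is item (\ref{itm:6.8.1+eps}). Let $W \subset D_0$ be a codim-two irreducible component of the non-snc locus of $(X, \rp D.)$; I must show $mn > 1$. If some $D_i$ with $i \ne 0$ passes through $W$, then $n \ge 2$ and we are done. Otherwise $D$ coincides with $D_0$ at $\eta_W$, and I need $m > 1$. Normality of $D_0$ forces $D_0$ to be smooth at $\eta_W$, so if $X$ were also smooth there, the pair $(X, \rp D.) = (X, D_0)$ would be snc at $\eta_W$, contradicting the non-snc hypothesis. Hence $X$ is singular at $\eta_W$. I now invoke Proposition \ref{prp:dlt quot}, which provides a cyclic Galois cover $\gamma_\alpha\!: V_\alpha \to U_\alpha$ with $V_\alpha$ smooth over a neighborhood of $\eta_W$; since $X$ is singular at $\eta_W$ while $V_\alpha$ is smooth, $\gamma_\alpha$ must ramify there, and (\ref{prp:dlt quot}.\ref{itm:dlt quot.2}) then forces $i(\eta_W, K_X + \rd D.) > 1$, so $m > 1$, completing the argument.
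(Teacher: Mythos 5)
Your proof is correct and follows the same skeleton as the paper's: normality from \cite[Cor.~5.52]{KM98}, dlt-ness of $(D_0, D_0^c)$ from standard adjunction (the paper cites \cite[Thm.~17.2]{Kol92} rather than \cite{Kol11}), and the coefficient formula of Example \ref{exm:547} for the \CC-pair property and for item (\ref{prp:dlt adjunction}.\ref{itm:6.8.2}). The one genuinely different step is item (\ref{prp:dlt adjunction}.\ref{itm:6.8.1+eps}): you run the contrapositive through Proposition \ref{prp:dlt quot} (a point of $\supp \rd D.$ where $X$ is singular cannot lie under an \'etale point of the smooth cyclic cover, so the index of $K_X + \rd D.$ there exceeds $1$), whereas the paper argues that $m = n = 1$ makes $K_X + D$ Cartier with $D = D_0$ at the generic point of $W$, whence $(X,D)$ is plt there by \cite[Prop.~9.12]{GKKP11}, hence canonical, hence snc by \cite[Prop.~9.2]{GKKP11}. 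Your route is legitimate, since Proposition \ref{prp:dlt quot} is established earlier and independently, and it is arguably cleaner; but it rests on the same codimension-two structure theory from \cite{GKKP11}, so nothing is gained in terms of inputs.

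One step needs tightening: the assertion that ``dlt forces at most one component of $D - D_0$ to contain $W$,'' justified by ``a routine discrepancy calculation on the blowup of $W$.'' This is not a consequence of dlt-ness alone: for a dlt pair with small boundary coefficients, arbitrarily many components may pass through a codimension-two center. What saves you is the \CC-pair hypothesis, which bounds every nonzero coefficient below by $\tfrac12$, so that three components through $W$ (one of them reduced) would give multiplicity $\ge 2$ along $W$ and hence discrepancy $\le -1$ for the corresponding valuation. Moreover, $X$ need not be smooth along $W$, so one cannot simply blow up $W$ and read off the discrepancy; the paper's Lemma \ref{lem:three comp} handles this by cutting down to the surface case and passing to an index-one cover. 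The statement you need is exactly that lemma, so this is a gap of justification rather than of substance.
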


An instance of (\ref{prp:dlt adjunction}.\ref{itm:6.8.1+eps}) is given by Example \ref{exm:491}, where the set $W$ is exactly the vertex of the cone.

\begin{prp}[Adapted morphisms for dlt pairs along the reduced boundary] \label{prp:dlt adapt}
Let $(X, D)$ be a dlt \CC-pair, and let $D_0 \subset \rd D.$ be a component of the reduced boundary. Then there exists a closed subset $Z \subset X$ with $\codim_X Z \geq 3$ and a covering of $D_0 \minus Z$ by Zariski-open subsets $(U_j)_{j \in J}$ of $X \minus Z$ which admit finite covering maps
\[ \delta_j\!: W_j \to U_j \]
satisfying the following properties for all $j \in J$.
\begin{enumerate}
\item\label{itm:dlt adapt.1} The pair $(W_j, \delta_j^* \rd D.)$ is quasi-projective, reduced, and snc.
\item\label{itm:dlt adapt.2} The morphism $\delta_j$ is adapted for the \CC-pair $(U_j, D|_{U_j})$.
\item\label{itm:dlt adapt.3a} The divisor $E_{0, j} := \delta_j^{-1}(D_0)$ is smooth and irreducible.
\item\label{itm:dlt adapt.3b} The restricted morphism $\delta_j|_{E_{0, j}}\!: E_{0, j} \to D_0 \cap U_j$ is adapted for the \CC-pair $(D_0 \cap U_j, D_0^c|_{D_0 \cap U_j})$.
\end{enumerate}
\end{prp}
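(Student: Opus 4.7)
My plan is to apply Proposition \ref{prp:dlt quot} to obtain local index-one covers, then refine via Kawamata's cyclic covering construction to obtain adapted morphisms, and finally to index each cover by a choice of irreducible component of the preimage of $D_0$ in order to enforce (\ref{prp:dlt adapt}.\ref{itm:dlt adapt.3a}). Throughout, I freely remove closed subsets of codimension at least three from $X$.

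First, apply Proposition \ref{prp:dlt quot} to the dlt pair $(X, D)$ to produce cyclic Galois index-one covers $\gamma_\alpha\!: V_\alpha \to U_\alpha$ whose union covers $D_0 \minus Z_1$ for some codimension-three subset $Z_1 \subset X$, with $(V_\alpha, \gamma_\alpha^* \rd D.)$ snc. Since $\gamma_\alpha$ is étale in codimension one away from $\rd D.$, every component of $\gamma_\alpha^* D_i$ with $D_i$ in the fractional part of $D$ has coefficient exactly one. On each smooth $V_\alpha$ I then apply the Kawamata cyclic covering construction to obtain a finite cover $\rho_\alpha\!: W_\alpha \to V_\alpha$ with $W_\alpha$ smooth, ramified to order exactly $n_i$ along every component of $\gamma_\alpha^* D_i$ for each fractional $D_i$, and étale along $\gamma_\alpha^* \rd D.$. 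The composition $\delta_\alpha := \gamma_\alpha \circ \rho_\alpha$ is then finite, surjective, and equidimensional; $(W_\alpha, \delta_\alpha^* \rd D.)$ is snc; and combining the étaleness of $\gamma_\alpha$ along fractional components with the order-$n_i$ ramification of $\rho_\alpha$ gives multiplicity exactly $n_i$ for $\delta_\alpha^* D_i$ along each divisor mapping onto $D_i$. This verifies (\ref{prp:dlt adapt}.\ref{itm:dlt adapt.1}) and (\ref{prp:dlt adapt}.\ref{itm:dlt adapt.2}) for $\delta_\alpha$.

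For (\ref{prp:dlt adapt}.\ref{itm:dlt adapt.3a}), I index the final covers by pairs $j = (\alpha, F)$, where $F$ ranges over the irreducible components of $\delta_\alpha^{-1}(D_0) \subset W_\alpha$; each such $F$ is automatically smooth because $(W_\alpha, \delta_\alpha^* \rd D.)$ is snc. For each pair I construct a finite cover $\delta_j\!: W_j \to U_j := U_\alpha$ such that $\delta_j^{-1}(D_0)$ is a single smooth irreducible divisor $E_{0, j}$ corresponding to $F$. The construction proceeds by taking a Stein factorization of an appropriate modification of $\delta_\alpha$ that isolates the chosen component $F$, followed if necessary by normalization, and is arranged to preserve the adapted property from $\delta_\alpha$ on a Zariski-open neighborhood of $F$. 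The collection $\{\delta_j\}$ continues to cover $D_0$ minus a codimension-three subset.

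Finally, (\ref{prp:dlt adapt}.\ref{itm:dlt adapt.3b}) follows by examining the fractional components of $D_0^c = \Diff_{D_0}(D - D_0)$ through Example \ref{exm:547}: each such component arises either as (i) the transverse restriction of a fractional component $D_i$ of $D$ with unchanged \CC-multiplicity $n_i$ — in which case the Kawamata ramification of $\rho_\alpha$ descends to give order-$n_i$ ramification of $\delta_j|_{E_{0, j}}$ — or (ii) from the different at a codimension-two singular locus $W \subset D_0$ of $X$ with \CC-multiplicity equal to the local index $m$ of $K_X + \rd D.$ — in which case the total branching of degree $m$ for $\gamma_\alpha$ over $W$, guaranteed by Proposition \ref{prp:dlt quot}.\ref{itm:dlt quot.3}, descends to order-$m$ ramification of $\delta_j|_{E_{0, j}}$. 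The principal technical obstacle is securing the irreducibility in (\ref{prp:dlt adapt}.\ref{itm:dlt adapt.3a}): the index-one cover $\gamma_\alpha$ may split the preimage of $D_0$ into several irreducible components, and since the Galois group of $\gamma_\alpha$ is cyclic, no intermediate Galois quotient isolates a single component; the Stein-factorization refinement therefore must be carried out so as to preserve simultaneously the finiteness of the cover, the smoothness of the total space, and the prescribed ramification orders dictated by (\ref{prp:dlt adapt}.\ref{itm:dlt adapt.2}) and (\ref{prp:dlt adapt}.\ref{itm:dlt adapt.3b}).
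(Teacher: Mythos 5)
Your overall strategy --- first the local index-one covers of Proposition \ref{prp:dlt quot}, then a cyclic cover on top to force the correct multiplicities along the fractional components, and finally the multiplicativity $\ell = m\cdot n$ of Example \ref{exm:547} to verify (\ref{prp:dlt adapt}.\ref{itm:dlt adapt.3b}) --- is exactly the paper's. The gap is at the step you yourself single out as ``the principal technical obstacle'': the irreducibility of $\delta_j^{-1}(D_0)$ required by (\ref{prp:dlt adapt}.\ref{itm:dlt adapt.3a}). Your proposed fix --- a ``Stein factorization of an appropriate modification of $\delta_\alpha$ that isolates the chosen component $F$'' --- is not an argument, and no such construction exists as stated: if $\delta_\alpha^{-1}(D_0)$ has several components, they all dominate $D_0 \cap U_\alpha$, so any open subset of $W_\alpha$ containing $F$ but avoiding the other components fails to be finite (or even proper) over $U_\alpha$; and since the covers in play are cyclic, there is in general no intermediate quotient separating the components either, as you note. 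One cannot keep the base $U_j = U_\alpha$ fixed and discard components of the fibre.

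The paper avoids the problem rather than solving it after the fact: it works pointwise, shrinking the base to a small neighbourhood $U$ of each $x \in D_0$ \emph{before} taking covers, and proves irreducibility of the preimage of $D_0$ directly. For the index-one cover $\gamma$ this uses that over a singular point $x$ the fibre $\gamma^{-1}(x)$ is a single point, so all components of $\gamma^* D_0$ would have to meet there, contradicting the normality of $\gamma^* D_0$ (which holds because $(V, \gamma^* D_0)$ is plt, \cite[Prop.~5.51]{KM98}); for the cyclic cover $\beta$ of Lemma \ref{lem:ex adapt}, total branching over $D_1$ forces any two components of $\beta^* D_0$ to meet over $D_0 \cap D_1$, contradicting the smoothness of $\beta^* D_0$. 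A secondary point: to make the multiplicity count in (\ref{prp:dlt adapt}.\ref{itm:dlt adapt.3b}) come out as $m \cdot n$ with $m$ the index of $K_X + D_0$ as in Example \ref{exm:547}, the paper applies Proposition \ref{prp:dlt quot} to the pair $(X, D_0)$, not to $(X, D)$; with your choice the branching degree of $\gamma_\alpha$ over a codimension-two stratum is the index of $K_X + \rd D.$, which need not equal $m$.
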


\subsubsection{Preparations for the proofs of Propositions \ref{prp:dlt adjunction} and \ref{prp:dlt adapt}}

We will prove three auxiliary lemmas.
The first two of them concern the question of how the boundary components of a \CC-pair intersect, if the pair is additionally assumed to be dlt. The argument relies on the trivial observation that the coefficients of the boundary divisor cannot be very small: they are at least $1/2$.~--- The last lemma is about the existence of adapted morphisms.

\begin{lem} \label{lem:three comp}
Let $(X, D)$ be a dlt \CC-pair, and let $D_0, D_1, D_2$ be three components of $D$, one of which is reduced. Then $\codim_X (D_0 \cap D_1 \cap D_2) \ge 3$.
\end{lem}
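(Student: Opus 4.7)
The plan is to argue by contradiction. I would suppose $W$ is an irreducible component of $D_0 \cap D_1 \cap D_2$ of codimension exactly $2$ in $X$, and relabel so that $D_0$ is the reduced component; writing $a_i$ for the coefficient of $D_i$ in $D$, we then have $a_0 = 1$ and $a_1, a_2 \geq \tfrac{1}{2}$ since $(X, D)$ is a \CC-pair. The first observation is that $W$ necessarily lies in the non-snc locus of $(X, D)$: at a codimension-two point a regular system of parameters has only two elements, so no three distinct smooth prime divisors can simultaneously form part of one, and if $X$ is singular at $W$ then the pair fails to be snc there for an even more basic reason.

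Next I would invoke Proposition \ref{prp:dlt quot} applied to $(X, D)$, which is permissible because $D_0 \subset \rd D. \ne 0$. Since the exceptional set $Z$ of that proposition has codimension $\geq 3$ while $W$ has codimension $2$, the generic point of $W$ lies in some $U_\alpha$; I would write $\gamma\colon V \to U$ for the corresponding cyclic Galois cover, so that $V$ is smooth and $\gamma$ branches only along $\supp \rd D. \cap U$. Fixing an irreducible component $\tilde W$ of $\gamma^{-1}(W)$, of codimension $2$ in $V$, I would note that the reduced preimages $\tilde D_i := \gamma^{-1}(D_i)_\red$ all contain $\tilde W$.

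The key computation is on $V$. The standard Riemann--Hurwitz formula for a Galois cover ramified only along components of $\rd D.$ yields $K_V + D_V = \gamma^*(K_X + D)|_U$, where the coefficients of $\tilde D_0, \tilde D_1, \tilde D_2$ in $D_V$ are $1, \geq a_1, \geq a_2$ respectively, and $(V, D_V)$ is log canonical by \cite[Prop.~5.20]{KM98}. Blowing up $\tilde W$ on the smooth $V$, the exceptional divisor $\tilde E$ then satisfies
\[
a(\tilde E, V, D_V) \;\leq\; 1 - (1 + a_1 + a_2) \;\leq\; -1,
\]
because the multiplicity of each $\tilde D_i$ along $\tilde W$ is at least $1$.

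The hard part will be the borderline case $a(\tilde E, V, D_V) = -1$: a strict inequality would immediately violate log canonicity of $(V, D_V)$, but equality has to be transported back to $X$. Here I would appeal to the standard scaling of log discrepancies under finite log-crepant covers: if $E$ denotes the divisor over $X$ corresponding to the restriction of the valuation $v_{\tilde E}$ to $k(X)$, then $a(\tilde E, V, D_V) + 1 = e \cdot (a(E, X, D) + 1)$ for some ramification index $e \geq 1$, whence $a(E, X, D) \leq -1$. Since the center of $E$ in $X$ is $W$, which lies in the non-snc locus by the initial observation, this contradicts dlt of $(X, D)$.
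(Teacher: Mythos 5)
Your argument is correct, but it takes a genuinely different route from the paper's. The paper cuts with general hyperplanes to reduce to the surface case, uses $\Q$-factoriality of dlt surfaces and monotonicity of discrepancies to lower the coefficients of $D_1, D_2$ to exactly $\tfrac12$ --- so that the pair becomes \emph{plt} --- and then takes an index one cover with respect to $K_X + D_0$ at the intersection point; smoothness of the cover follows from \cite[Thm.~4.5]{KM98}, and blowing up the preimage point gives discrepancy $\le 1 - (1 + \tfrac12 + \tfrac12) = -1$, contradicting pltness outright with no need to track where the center lies. You instead stay in codimension two and obtain your smooth local model from Proposition~\ref{prp:dlt quot}; the price is that you only reach the threshold value $-1$ rather than a strict violation, so you must transport the discrepancy back to $X$ via $a(\tilde E, V, D_V) + 1 = e\,(a(E,X,D)+1)$ from \cite[Prop.~5.20]{KM98} and then invoke the dlt condition together with your (correct) initial observation that $W$ lies in the non-snc locus --- this step plays exactly the role that the reduction to plt plays in the paper. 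Both arguments are sound and of comparable depth: yours avoids the hyperplane-cutting and the surface-specific inputs \cite[Prop.~4.11, Thm.~4.5]{KM98}, but leans on the heavier Proposition~\ref{prp:dlt quot} (itself resting on \cite[Sec.~9]{GKKP11}); there is no circularity, since the proof of that proposition does not use Lemma~\ref{lem:three comp}. Two small points you should make explicit: since $\gamma$ is \'etale in codimension one by (\ref{prp:dlt quot}.\ref{itm:dlt quot.2}), there is in fact no divisorial ramification at all, so $K_V = \gamma^* K_U$ and each $\gamma^* D_i$ is reduced (your ``$\ge a_i$'' is an equality); and the $\Q$-Cartierness of $K_U + D|_U$ needed to make sense of the crepant pullback is supplied by the $\Q$-factoriality of $X \minus Z$ asserted in Proposition~\ref{prp:dlt quot}.
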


\begin{proof}
We may assume that $D_0$ is reduced. Cutting with hyperplanes \cite[Lem.~2.25]{GKKP11}, we reduce to the case where $X$ is a surface. Then by \cite[Prop.~4.11]{KM98}, $X$ is \Q-factorial. This implies that $(X, D_0 + \frac12 D_1 + \frac12 D_2)$ is still a dlt pair, by \cite[Cor.~2.39]{KM98}. Thus we may assume that $D = D_0 + \frac12 D_1 + \frac12 D_2$. In particular, we assume that $(X, D)$ is plt.

Proceeding by contradiction, assume that $D_0, D_1, D_2$ intersect in a point $P$. In a neighborhood $P \in U \subset X$, take an index one cover $p\!: Y \to U$ at $P$ with respect to $K_X + D_0$, and let $Q$ be the unique point lying over $P$. Then $(Y, p^* D)$ is plt and $(Y, p^* D_0)$ is canonical, hence $Y$ is smooth at $Q$ by \cite[Thm.~4.5]{KM98}. Blowing up $Q$, we see that $\discrep(Y, p^* D) = 1 - \mult_Q(p^* D) \le -1$, a contradiction to $(Y, p^* D)$ being plt.
\end{proof}

\begin{lem} \label{lem:smooth in codim 2}
Let $(X, D)$ be a dlt \CC-pair, where $X$ is smooth. Let $D_0$ be a reduced component of $D$. Then $\codim_X \bigl( D_0 \cap (X, \rp D.)_\nsnc \bigr) \ge 3$.
\end{lem}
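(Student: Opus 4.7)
The plan is to show that at every codimension-two point $P$ of $X$ lying on $D_0$, the pair $(X, \rp D.)$ is snc. Fix such a $P$ and cut with $\dim X - 2$ general hyperplane sections through $P$, invoking \cite[Lem.~2.25]{GKKP11}. Both dlt-ness and the property of being non-snc at $P$ are preserved by this procedure, so I reduce to the case where $X$ is a smooth surface, $P$ is a closed point on $D_0$, and $(X, D)$ is a dlt \CC-pair.

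On the surface, I distinguish cases by the number $r$ of components of $\supp \rp D.$ passing through $P$, noting $r \geq 1$ since $D_0$ is one of them. If $r = 1$, then snc-ness at $P$ amounts to smoothness of $D_0$ at $P$; since $D_0$ is normal by \cite[Prop.~5.51]{KM98} and one-dimensional, this holds automatically. If $r \geq 3$, then three distinct components of $D$ meet at $P$, one of them being the reduced divisor $D_0$, directly contradicting Lemma \ref{lem:three comp}.

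The main case is $r = 2$: let $D_1$ be the second component, with \CC-coefficient $\tfrac{n-1}{n}$ for some $n \in \N^+ \cup \{\infty\}$. Let $f\!: Y \to X$ be the blowup of $P$ with exceptional curve $E$, and set $m := \mult_P D_1 \geq 1$. The formula
\[ a(E, X, D) = 1 - \mult_P D = 1 - 1 - \tfrac{n-1}{n} m = -\tfrac{n-1}{n} m, \]
combined with the condition $a(E, X, D) > -1$ coming from dlt-ness, forces $\tfrac{n-1}{n} m < 1$, and hence $m = 1$ (this also excludes the case $n = \infty$). So $D_1$ is smooth at $P$. If $D_0$ and $D_1$ were tangent at $P$, then $\tilde D_0, \tilde D_1$ and $E$ would all pass through a single point $Q \in E$. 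Since the pair $\bigl(Y, \tilde D_0 + \tfrac{n-1}{n}\tilde D_1 + \tfrac{n-1}{n} E\bigr)$ is log crepant to $(X, D)$, it is again a dlt \CC-pair, and the three-way intersection at $Q$ with the reduced component $\tilde D_0$ contradicts Lemma \ref{lem:three comp} applied to this new pair. Hence $D_0$ and $D_1$ meet transversely at $P$, and $(X, \rp D.)$ is snc at $P$.

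The main obstacle I anticipate is the reduction to the surface case: one must check carefully that the Bertini-type cutting of \cite[Lem.~2.25]{GKKP11} preserves both the dlt \CC-pair structure through $P$ and the failure of snc-ness at $P$. Once this reduction is in place, the remaining discrepancy computation at the blowup is routine and the three-component obstruction supplied by Lemma \ref{lem:three comp} closes the argument.
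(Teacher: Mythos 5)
Your proof is correct and follows essentially the same route as the paper: reduction to the surface case by cutting with general hyperplanes, Lemma~\ref{lem:three comp} to exclude three components through the point, normality of $D_0$ for its smoothness, and a blow-up discrepancy computation for the smoothness of the second component and for transversality. The only (harmless) variations are that the paper first rounds the boundary down to $D_0 + \frac12 D_1$ so that the pair becomes plt and \emph{every} exceptional divisor has discrepancy $>-1$, whereas you invoke the dlt condition directly --- which is legitimate only because the center $P$ may be assumed to lie in the non-snc locus, a point you should make explicit --- and that you dispose of the tangency case by applying Lemma~\ref{lem:three comp} to the log crepant pull-back on the blow-up (whose dlt-ness needs a one-line justification), where the paper instead performs a second blow-up and computes the discrepancy directly.
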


\begin{proof}
Using \cite[Lemmas 2.23.5 and 2.25]{GKKP11}, we again reduce to the surface case by cutting with hyperplanes. By Lemma \ref{lem:three comp}, we may assume that $D$ has only two components. Rounding down if necessary, we may assume that $D = D_0 + \frac12 D_1$. In particular, $(X, D)$ is plt.
Let $P$ be a point of intersection of $D_0$ and $D_1$. We will first show that $D_0$ and $D_1$ are both smooth at $P$. Secondly, we will show that they intersect transversely.

Being the reduced boundary of a plt pair, $D_0$ is normal \cite[Prop.~5.51]{KM98}. Since it is a curve, it is even smooth. Now proceeding by contradiction, we assume that $D_1$ is singular at $P$. Blowing up at $P$, we see that $\discrep(X, D) = 1 - \mult_P(D) \le -1$, a contradiction to $(X, D)$ being plt. Hence $D_0$ and $D_1$ are both smooth.

If $D_0$ and $D_1$ intersect non-transversely in $P$, we first blow up at $P$. Denote the strict transforms of $D_0$ and $D_1$ by $\tilde D_0$ and $\tilde D_1$, respectively, and let $E$ be the exceptional divisor. Blowing up the point $\tilde D_0 \cap \tilde D_1 \cap E$, we again obtain a contradiction to $(X, D)$ being plt.
\end{proof}

\begin{lem}[Existence of adapted morphisms] \label{lem:ex adapt}
Let $(X, D = D_0 + \frac{n_1 - 1}{n_1} D_1)$ be a quasi-projective \CC-pair, with $(X, \rp D.)$ snc, $n_1 < \infty$ and $D_0, D_1$ irreducible. Assume that $D_1$ is a principal divisor and that $D_0 \cap D_1 \ne \emptyset$. Then there is a finite cover $\beta\!: Y \to X$ with the following properties.
\begin{enumerate}
\item\label{itm:ex adapt.1} The pair $(Y, \beta^* D_0)$ is snc and $\beta^* D_0$ is irreducible.
\item\label{itm:ex adapt.2} The morphism $\beta$ is adapted for the pair $(X, D)$.
\item\label{itm:ex adapt.3} The branch locus of $\beta$ is exactly $D_1$, and $\beta$ is totally branched over $D_1$.
\end{enumerate}
\end{lem}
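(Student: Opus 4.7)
The plan is to take $\beta$ to be the standard cyclic cover of degree $n_1$ extracting an $n_1$-th root of $f$: since $D_1 = \mathrm{div}(f)$ is principal, set
\[ Y := \Spec_X \O_X[t]/(t^{n_1} - f), \qquad \beta\!: Y \to X. \]
This is finite flat of degree $n_1$, hence equidimensional, totally branched over $D_1$, and étale elsewhere. To extract the local structure I would choose snc coordinates $(x_1, \ldots, x_n)$ at a point of $D_1$ with $D_1 = \{x_2 = 0\}$ and write $f = x_2 u$ with $u$ a unit (using that $D_1 = \mathrm{div}(f)$ is reduced and irreducible, so $f$ has a simple zero along $D_1$). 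The substitution $x_2 = t^{n_1}/u$ then exhibits $(x_1, t, x_3, \ldots, x_n)$ as local coordinates on $Y$ in which $\beta^* D_1 = n_1 \cdot \{t = 0\}$. This gives property (3), and combined with the observation that the adapted condition for $D_0$ is vacuous (its \CC-multiplicity is $\infty$), also property (2).

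For property (1), smoothness of $Y$ and of $\beta^* D_0$ is immediate in the same chart: away from $D_1$ the map $\beta$ is étale over the snc pair $(X, D_0)$, so both lift, and above a point of $D_0 \cap D_1$ (with $D_0 = \{x_1 = 0\}$ in the coordinates above) we read off $\beta^* D_0 = \{x_1 = 0\}$ inside the smooth chart $(x_1, t, x_3, \ldots, x_n)$ of $Y$.

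The main obstacle will be the irreducibility of $\beta^* D_0$. Since $D_0 \ne D_1$ and $\beta$ is étale outside $D_1$, the divisor $\beta^* D_0$ is reduced; combined with its smoothness, irreducibility reduces to connectedness, which in turn reduces to the irreducibility of the polynomial $t^{n_1} - f|_{D_0}$ over the function field of the normal variety $D_0$. By Capelli's theorem, this holds provided $f|_{D_0}$ is not a $p$-th power for any prime $p \mid n_1$ and, in the case $4 \mid n_1$, provided $-4 f|_{D_0}$ is not a fourth power. Both conditions follow from the fact that $f|_{D_0}$ vanishes to order exactly $1$ along each component of the nonempty intersection $D_0 \cap D_1$: sncness of $(X, D_0 + D_1)$ together with the reducedness of $D_1 = \mathrm{div}(f)$ forces this simple zero, and a relation $f|_{D_0} = h^p$ or $-4 f|_{D_0} = h^4$ would make every divisorial order of vanishing a multiple of $p$ or $4$, respectively, which is impossible. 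It is precisely here that the hypothesis $D_0 \cap D_1 \ne \emptyset$ is essential; without a ramification divisor meeting $D_0$, the restricted cover could split into several étale sheets.
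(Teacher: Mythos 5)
Your construction is the same as the paper's: the degree-$n_1$ cyclic cover $Y = \{t^{n_1} = f\} \subset X \times \C$ obtained by extracting an $n_1$-th root of an equation $f$ for $D_1$, with smoothness of $Y$ and of $\beta^* D_0$, total branching over $D_1$, and adaptedness all checked in local snc coordinates exactly as you do (the condition on $D_0$ being vacuous because its \CC-multiplicity is $\infty$). The one step where you genuinely diverge is the irreducibility of $\beta^* D_0$, which is indeed the only non-routine point. You reduce it to the irreducibility of $t^{n_1} - f|_{D_0}$ over the function field $K(D_0)$ and invoke Capelli's theorem, verifying its hypotheses from the fact that $f|_{D_0}$ vanishes to order exactly $1$ along each component of the nonempty intersection $D_0 \cap D_1$, so that $f|_{D_0}$ cannot lie in $K(D_0)^p$ for $p \mid n_1$ nor $-4f|_{D_0}$ in $K(D_0)^4$; together with the observation that every component of $\beta^{-1}(D_0)$ dominates $D_0$ under the finite map $\beta$, this is correct. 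The paper argues instead purely geometrically: if $\beta^* D_0$ had two distinct components $T$ and $S$, each would surject onto $D_0$, and since $\beta$ is totally branched over $D_1$ both would contain the single point of $\beta^{-1}(x)$ for any $x \in D_0 \cap D_1 \ne \emptyset$, contradicting the already-established smoothness of $\beta^* D_0$. Both arguments pivot on the hypothesis $D_0 \cap D_1 \ne \emptyset$, which you correctly flag as essential; the paper's version is shorter and needs no field theory, while yours makes the connectedness mechanism explicit at the level of function fields and does not rely on first knowing that $\beta^* D_0$ is smooth.
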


\begin{proof}
Let $D_1$ be given by $\{ f = 0 \}$ for some function $f$ on $X$. Consider $X \x \C$ with coordinate $z$ in the second factor, and define $Y := \{ z^{n_1} = f \} \subset X \x \C$ with $\beta$ the restriction of the projection to the first factor. By a calculation in local coordinates, both $Y$ and $\beta^* D_0$ are smooth. It is then clear that (\ref{lem:ex adapt}.\ref{itm:ex adapt.2}) and (\ref{lem:ex adapt}.\ref{itm:ex adapt.3}) hold.

Assume that $\beta^* D_0$ is reducible, containing two distinct components $T$ and $S$. Since $\beta$ is totally branched over $D_1$, the components $T$ and $S$ must meet in any point of $\beta^{-1}(D_0 \cap D_1)$. This contradicts the smoothness of $\beta^* D_0$, so $\beta^* D_0$ is irreducible. This yields (\ref{lem:ex adapt}.\ref{itm:ex adapt.1}).
\end{proof}

\subsubsection{Proof of Proposition \ref{prp:dlt adjunction}}\label{subsubsec:prf dlt adjunction}

As before, normality of $D_0$ follows from \cite[Corollary 5.52]{KM98}. Easy adjunction \cite[Thm.~17.2]{Kol92} implies that $(D_0, D_0^c)$ is dlt. By Lemma \ref{lem:three comp}, we see that we can determine the coefficients of $D_0^c$ as in Example \ref{exm:547}. In particular, $(D_0, D_0^c)$ is a \CC-pair. This is (\ref{prp:dlt adjunction}.\ref{itm:6.8.1+eps/2}).

To show (\ref{prp:dlt adjunction}.\ref{itm:6.8.1+eps}), we argue by contradiction and assume there is a codimension-two component $W$ of the non-snc locus of $(X, \rp D.)$ which is contained in $D_0$ but not in $\supp \rp D_0^c.$. In the notation of Example \ref{exm:547}, this says that $w = 0$, which can only happen if $m = n = 1$. This means that at the general point $\xi$ of $W$, $D = D_0$ and $K_X + D$ is Cartier. By \cite[Prop.~9.12]{GKKP11}, the pair $(X, D = \rp D.)$ is plt at $\xi$. In fact, $(X, D)$ is even canonical at $\xi$, since $K_X + D$ is Cartier at $\xi$. But then $(X, D)$ is snc at $\xi$ by \cite[Prop.~9.2]{GKKP11}. So we obtain a contradiction to the definition of $W$.

We turn to (\ref{prp:dlt adjunction}.\ref{itm:6.8.2}). Again, let $W \subset D_0$ be a prime divisor and let $w$ be the coefficient of $D_0^c$ along $W$. Looking at Example \ref{exm:547} and noting that $m < \infty$, we see that
\[ W \subset \rd D_0^c. \quad \Longleftrightarrow \quad \rd w. = 1 \quad \Longleftrightarrow \quad n = \infty \quad \Longleftrightarrow \quad W \subset \rd D. - D_0. \]
This finishes the proof. \qed

\subsubsection{Proof of Proposition \ref{prp:dlt adapt}} \label{subsubsec:prf dlt adapt}

The proof is divided into five steps. First, we explain the general idea. Then we make some simplifying assumptions, without loss of generality. Finally, we construct the required covering maps in two steps, and we prove that they satisfy the required properties.

\subsubsection*{Step 0: Outline of proof}

Fix any point $x \in X$. Since we only want to cover $D_0$ by the images of adapted morphisms, we may assume $x \in D_0$. Assuming $x$ is not contained in a ``bad'' set of codimension three, Proposition \ref{prp:dlt quot} allows to find a neighborhood $x \in U_\alpha$ and a finite map $\gamma_\alpha\!: V_\alpha \to U_\alpha \subset X$ satisfying (\ref{prp:dlt quot}.\ref{itm:dlt quot.1})--(\ref{prp:dlt quot}.\ref{itm:dlt quot.4}). Since this map is \'etale in codimension one, it clearly cannot be adapted for $(X, D)$. Therefore, after shrinking $U_\alpha$ to a smaller neighborhood $x \in U$ and setting $V := \gamma_\alpha^{-1}(U)$, $\gamma := \gamma_\alpha|_V$, we use Lemma \ref{lem:ex adapt} to construct an adapted morphism $\beta\!: W \to V$ for the \CC-pair $(V, \gamma^* D)$. The desired map $\delta\!: W \to U$ is then given as $\gamma \circ \beta$. Since the point $x$ was chosen arbitrarily, this ends the proof.~--- The diagram in Figure~\ref{fig:dlt adapt} summarizes the construction.
\begin{figure}
\centering
\[ \xymatrix{
E_0 \ar@{=}[rr] \ar@/_/[rrdd]_-{\delta|_{E_0}}
& & \delta^{-1}(D_0) \ar@{ ir->}[rrr]^{\quad\text{inclusion}} \ar[d] & & & W \ar[d]^-{\beta \text{, adapted}} \ar@/_1.5pc/[dd]|\hole_(.35){\delta} \\
& & \gamma^{-1}(D_0) \ar@{ ir->}[rrr]^{\quad\text{inclusion}} \ar[d] & & & V \ar[d]^-{\gamma} \ar@{ ir->}[r] & V_\alpha \ar[d]^-{\gamma_\alpha \text{, \'etale in codim 1}} \\
& & D_0 \cap U \ar@{ ir->}[rrr]^{\quad\text{inclusion}}              & & & U \ar@{ ir->}[r] & U_\alpha \ar@{ ir->}[r] & X \\
} \]
\caption{Construction used in the proof of Proposition~\ref{prp:dlt adapt}}
\label{fig:dlt adapt}
\end{figure}
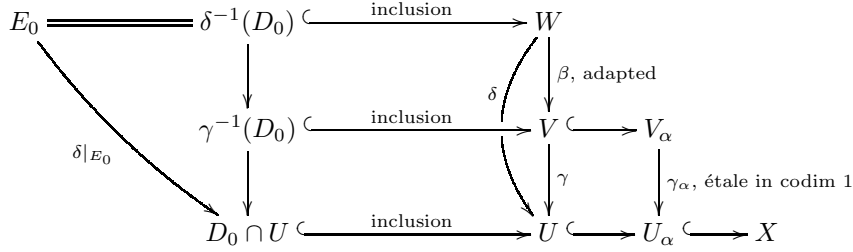

\subsubsection*{Step 1: Simplifying assumptions}

Removing finitely many closed subsets $S \subset X$ with the property that $\codim_X (S \cap D_0) \ge 3$, we can make additional simplifying assumptions.
First, by \cite[Prop. 9.12]{GKKP11}, we may assume the following.
\begin{awlog} \label{awlog:dichotomy}
For every point $x \in D_0$, one of the following two conditions holds.
\begin{enumerate}
\item\label{itm:dich1} The pair $(X, D)$ is snc, $D$ is reduced around $x$, and there are exactly two components of $\rd D.$ passing through $x$.
\item\label{itm:dich2} The pair $(X, D)$ is plt at $x$, and $\rd D.$ is smooth at $x$ (but $X$ need not be smooth at $x$).
\end{enumerate}
\end{awlog}

By Lemma \ref{lem:smooth in codim 2}, we may furthermore assume the following.

\begin{awlog} \label{awlog:701}
At any point $x \in D_0$ where $X$ is smooth, the pair $(X, \rp D.)$ is snc.
\end{awlog}

By Lemma \ref{lem:three comp} we may remove the locus where $D_0$ and at least two other components of $D$ intersect.
\begin{awlog} \label{awlog:690}
Through any point $x \in D_0$, there passes at most one component of $D - D_0$.
\end{awlog}

We apply Proposition \ref{prp:dlt quot} to the pair $(X, D_0)$.
\begin{awlog} \label{awlog:712}
The divisor $D_0$ is covered by finitely many open subsets $U_\alpha \subset X$, $\alpha \in I$, which admit covering maps $\gamma_\alpha\!: V_\alpha \to U_\alpha$ as in Proposition \ref{prp:dlt quot}.
\end{awlog}

Let $\alpha \in I$ be any index and $x \in D_0 \cap U_\alpha$ any point. We consider the dichotomy given by Assumption \ref{awlog:dichotomy}. If (\ref{awlog:dichotomy}.\ref{itm:dich1}) holds at $x$, then $\gamma_\alpha$ is \'etale over $x$ by (\ref{prp:dlt quot}.\ref{itm:dlt quot.2}). Hence $(V_\alpha, \rp \gamma_\alpha^* D.)$ is snc at all points of $\gamma_\alpha^{-1}(x)$.
If (\ref{awlog:dichotomy}.\ref{itm:dich2}) holds at $x$, then since $\gamma_\alpha$ is \'etale in codimension one by (\ref{prp:dlt quot}.\ref{itm:dlt quot.2}), we have that $(V_\alpha, \gamma_\alpha^* D)$ is a plt \CC-pair. Now since $V_\alpha$ is smooth by (\ref{prp:dlt quot}.\ref{itm:dlt quot.1}), Lemma \ref{lem:smooth in codim 2} together with the finiteness of the index set $I$ leads to the following extra assumption.
\begin{awlog} \label{awlog:697}
For any index $\alpha$, the pair $(V_\alpha, \rp \gamma_\alpha^* D.)$ is snc.
\end{awlog}

Under Assumption \ref{awlog:697}, the set where at least three components of $\rp \gamma_\alpha^* D.$ meet has codimension $\ge 3$ in $V_\alpha$. Removing the closure of its image in $X$, we can make the following assumption.

\begin{awlog} \label{awlog:723}
For any index $\alpha$ and any point $y$ of $V_\alpha$, there are at most two components of $\rp \gamma_\alpha^* D.$ passing through $y$.
\end{awlog}

\subsubsection*{Step 2: Construction of $U$ and $\gamma$}

We formulate the content of this step in the following claim.

\begin{clm} \label{clm:step 2}
For any point $x \in D_0$, there exists a neighborhood $x \in U \subset X$ and a finite map $\gamma\!: V \to U$, \'etale in codimension one, such that the following properties hold.
\begin{enumerate}
\item\label{itm:step 2.1} The pair $(V, \gamma^* \rp D.)$ is snc.
\item\label{itm:step 2.2} Over any point $P \in U$ where $i(P, K_X + D_0) = 1$, the map $\gamma$ is \'etale.
\item\label{itm:step 2.3} Over any point $P \in U$ where $i(P, K_X + D_0) > 1$, the map $\gamma$ is totally ramified of order $i(P, K_X + D_0)$.
\item\label{itm:step 2.4} The divisor $\gamma^* D_0$ is irreducible.
\item\label{itm:step 2.5} Writing $\gamma^* D = \gamma^* D_0 + D_V$, we have that $\supp D_V$ is either empty or irreducible, and that $\rp D_V.$ is a principal divisor.
\end{enumerate}
\end{clm}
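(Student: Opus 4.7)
The plan is to obtain $\gamma\!: V \to U$ by restricting one of the cyclic Galois covers $\gamma_\alpha\!: V_\alpha \to U_\alpha$ from Assumption~\ref{awlog:712} (which is Proposition~\ref{prp:dlt quot} applied to $(X, D_0)$) to a sufficiently small affine neighborhood $U$ of $x$, possibly after replacing $V_\alpha|_U$ by one of its connected components. Fix an index $\alpha$ with $x \in U_\alpha$. Items~(1), (2), and~(3) then follow almost formally: (1) is immediate from Assumption~\ref{awlog:697}, while (2) and (3) translate items~(\ref{prp:dlt quot}.\ref{itm:dlt quot.2})--(\ref{prp:dlt quot}.\ref{itm:dlt quot.3}) of Proposition~\ref{prp:dlt quot} (which are inherited by $\gamma_\alpha$ because the latter is an index one cover for $K_X + D_0$), provided that $U$ is shrunk so that $i(\cdot, K_X+D_0)$ takes only the values $1$ and $\deg \gamma_\alpha$. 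Such a shrinking exists by the semicontinuity Lemma~\ref{lem:index semicont} and Remark~\ref{rem:554}.

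The real work lies in (4) and (5). Both rely on the observation that by Assumption~\ref{awlog:697} the divisor $\rp \gamma_\alpha^* D.$ is smooth, so its irreducible components are pairwise disjoint. For~(4), shrink $U$ so that $D_0 \cap U$ is irreducible. If $x \in X_\sg$, then by~(\ref{prp:dlt quot}.\ref{itm:dlt quot.3}) the fiber $\gamma_\alpha^{-1}(x)$ is a single point~$y$. Every irreducible component of $\gamma_\alpha^{-1}(D_0 \cap U)$ maps surjectively onto the connected base $D_0 \cap U \ni x$, hence must pass through~$y$, and disjointness forces there to be only one, giving (4) with $V := \gamma_\alpha^{-1}(U)$. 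If instead $x \notin X_\sg$, then $X$ is smooth at~$x$, so $i(x, K_X+D_0)=1$; after shrinking $U$ we may assume $\gamma_\alpha|_U$ is \'etale. Taking $V$ to be a single connected component of $\gamma_\alpha^{-1}(U)$, the preimage $\gamma^*D_0 = \gamma^{-1}(D_0 \cap U)$ is connected, and the disjointness of smooth components again yields irreducibility.

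For~(5), Assumption~\ref{awlog:690} furnishes at most one component $D_1$ of $D - D_0$ meeting $D_0$ near $x$. If no such $D_1$ exists, shrink $U$ to avoid $\supp(D - D_0)$ so that $D_V = 0$. Otherwise shrink $U$ so that $D_1 \cap U$ is irreducible and cut out by a single regular function $f \in \O_X(U)$. Then $\gamma^* D_1$ is the divisor of $\gamma^* f$ on~$V$, hence principal, and its round-up agrees with $\gamma^* D_1$ because the coefficient of $D_1$ in $D$ lies in $(0, 1]$. Irreducibility of $\supp \gamma^* D_1$ follows once more from the disjoint-smooth-components argument, now invoking Assumption~\ref{awlog:723}: any two distinct components of $\supp \gamma^* D_1$ meeting at a point $y \in \gamma^{-1}(x)$ would, together with the component of $\supp \gamma^* D_0$ through~$y$, yield at least three components of $\rp \gamma_\alpha^* D.$ at~$y$.

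The chief technical obstacle is the irreducibility verification in (4) and~(5), since in principle a cyclic Galois cover can split a divisor into several conjugate pieces. What prevents this here is precisely the conjunction of Assumptions~\ref{awlog:697} and~\ref{awlog:723}---smoothness of $\rp \gamma_\alpha^* D.$ with at most two components through any point---combined with the total-ramification behavior~(\ref{prp:dlt quot}.\ref{itm:dlt quot.3}) over $X_\sg$. The simplifying reductions carried out in Step~1 of the proof are essentially the minimum required to make this local picture work.
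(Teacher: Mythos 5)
Your reductions and your treatment of items (1)--(3) match the paper's, but the irreducibility arguments for (4) and (5) rest on a misreading of Assumption \ref{awlog:697}. That assumption says the \emph{pair} $(V_\alpha, \rp \gamma_\alpha^* D.)$ is snc; it does not say that the divisor $\rp \gamma_\alpha^* D.$ is smooth. An snc divisor may well have two smooth components crossing transversally, so ``its irreducible components are pairwise disjoint'' is not available. This matters precisely where you need it most: when $x \in X_\sg$ and no component of $D - D_0$ passes through $x$, two components of $\gamma_\alpha^* D_0$ meeting transversally at the unique point $y \in \gamma_\alpha^{-1}(x)$ would be perfectly compatible with Assumption \ref{awlog:697} and also with Assumption \ref{awlog:723}, which only forbids \emph{three} components through a point. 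The paper closes exactly this gap with a different input: since $(V, \gamma^* D_0)$ is plt, the divisor $\gamma^* D_0$ is \emph{normal} by \cite[Prop.~5.51]{KM98}, and a normal divisor cannot have two components through a common point. Your appeal to Assumption \ref{awlog:723} does work when a second boundary component $D_1$ passes through $x$ (three components of $\rp \gamma_\alpha^* D.$ would then meet at $y$), which is the paper's case $\sf E$; but in case $\sf C$ the normality argument is indispensable.

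Two smaller points. First, when $x$ is a smooth point of $X$, the preimage $\gamma^{-1}(D_0 \cap U)$ under a connected component of a Zariski-locally \'etale cover need not be connected, so that step of your argument for (4) is unjustified; the paper simply takes $\gamma = \id$ at such points, which suffices because $(X, \rp D.)$ is snc there by Assumption \ref{awlog:701}. Second, for (5) at a singular point of $X$ the divisor $D_1$ need not be Cartier at $x$, so you cannot in general arrange $D_1 \cap U = \{ f = 0 \}$ on $U$; the paper instead trivializes $\gamma_\alpha^* D_1$ on $V = \gamma_\alpha^{-1}(U)$, where it is automatically Cartier because $V_\alpha$ is smooth.
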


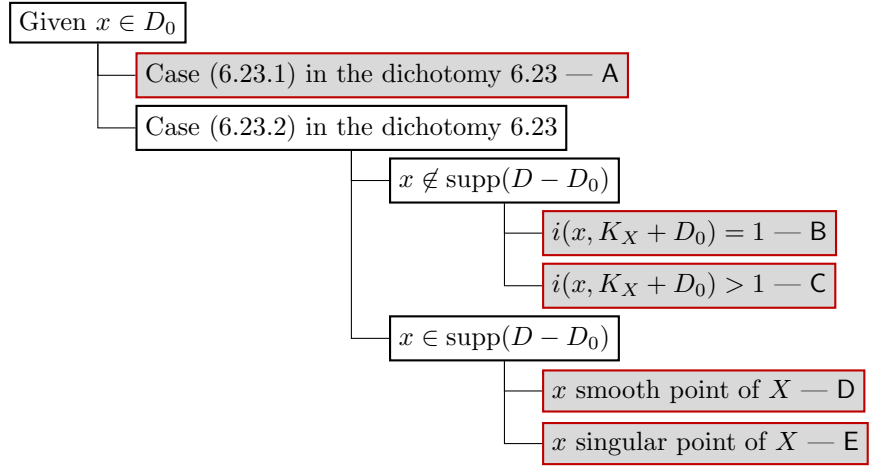
\begin{figure}
  \centering

  \tikzstyle{every node}=[draw=black,thick,anchor=west]
  \tikzstyle{selected}=[draw=red,fill=red!30]
  \tikzstyle{optional}=[dashed,fill=gray!50]
  \begin{tikzpicture}[%
    grow via three points={one child at (0.5,-0.7) and
      two children at (0.5,-0.7) and (0.5,-1.4)},
    edge from parent path={(\tikzparentnode.south) |- (\tikzchildnode.west)}]
    \node {Given $x \in D_0$}
    child { node[draw=red!75!black, fill=black!15] {Case (\ref{awlog:dichotomy}.\ref{itm:dich1}) in the dichotomy \ref{awlog:dichotomy} --- $\sf A$} }
    child { node {Case (\ref{awlog:dichotomy}.\ref{itm:dich2}) in the dichotomy \ref{awlog:dichotomy}}
      child { node {$x \not \in \supp(D-D_0)$}
        child { node[draw=red!75!black, fill=black!15] {$i(x,K_X+D_0) = 1$ --- $\sf B$}}
        child { node[draw=red!75!black, fill=black!15] {$i(x,K_X+D_0) > 1$ --- $\sf C$}}
      }
      child [missing] {}
      child [missing] {}
      child { node {$x \in \supp(D-D_0)$}
        child { node[draw=red!75!black, fill=black!15] {$x$ smooth point of $X$ --- $\sf D$}}
        child { node[draw=red!75!black, fill=black!15] {$x$ singular point of $X$ --- $\sf E$}}
      }
    };
  \end{tikzpicture}
  
  \caption{Cases considered in the proof of Claim~\ref{clm:step 2}}
  \label{fig:claim2}
\end{figure}

We prove Claim \ref{clm:step 2} in the remaining part of Step 2.
The proof is by a distinction of five cases $\sf A$--$\sf E$, which are defined in Figure \ref{fig:claim2}.
Fix an index $\alpha$ such that $x \in U_\alpha$.

\begin{proof}[Proof of Claim~\ref{clm:step 2} in case $\sf A$]
  If we are in case $\sf A$, then the pair $(X, D)$ is snc and reduced at $x$,
  and there are exactly two components of $D$ passing through $x$.
  So we may find a smaller neighborhood $x \in U \subset U_\alpha$ such that
  $D|_U$ has exactly two components. Shrinking $U \ni x$ further, we can also
  assume that $(D - D_0)|_U$ is principal. Now setting $V = U$ and $\gamma = \id\!: V \to U$,
  we are done.
\end{proof}

Note that in all remaining cases $\sf B$--$\sf E$, the pair $(X, D)$ is plt at $x$, and $\rd D.$ is smooth at $x$.

\begin{proof}[Proof of Claim~\ref{clm:step 2} in case $\sf B$]
  In this case, there is a smaller neighborhood $x \in U \subset U_\alpha$ such
  that $D|_U = D_0|_U$. Since $\gamma_\alpha\!: V_\alpha \to U_\alpha$ is \'etale
  over $x$ and $V_\alpha$ is smooth, $x \in U$ is also smooth. Then by Assumption
  \ref{awlog:701}, $(U, D|_U)$ is snc, so we may set $V = U$ and
  $\gamma = \id\!: V \to U$.
\end{proof}

\begin{proof}[Proof of Claim~\ref{clm:step 2} in case $\sf C$]
  Again, there is a neighborhood $x \in U \subset U_\alpha$ such that
  $D|_U = D_0|_U$. We set $V = \gamma_\alpha^{-1}(U)$ and $\gamma =
  \gamma_\alpha|_V$. In this case, we need to show (\ref{clm:step 2}.\ref{itm:step 2.4})
  explicitly. If $\gamma^* D_0$ was reducible, its
  components would meet in the unique point of $\gamma^{-1}(x)$. But $\gamma^*
  D_0$ is normal by \cite[Prop.~5.51]{KM98}, since $(V, \gamma^* D_0)$ is plt.
\end{proof}

In the remaining cases $\sf D$--$\sf E$, we have $x \in \supp (D - D_0)$. Then by Assumption \ref{awlog:690}, there exists a neighborhood $x \in U \subset U_\alpha$ and a component $D_1 \subset D$ such that
  \begin{sequation}\label{eqn:938}
    D|_U = \bigl(D_0 + \textstyle\frac{n_1-1}{n_1} D_1\bigr)\big|_U \text{ for
      some $1 < n_1 < \infty$, and $x \in D_0 \cap D_1$.}
  \end{sequation}%

\begin{proof}[Proof of Claim~\ref{clm:step 2} in case $\sf D$]
  If $x$ is a smooth point of $X$, then the pair $(X, \rp D.)$ is snc by Assumption \ref{awlog:701}. Since $D_1$ is
  Cartier, we may shrink $U \ni x$ and assume that $D_1$ is a principal
  divisor. Then again we may set $U = U_\alpha$ and $\gamma = \id\!: V = U \to U$.
\end{proof}

\begin{proof}[Proof of Claim~\ref{clm:step 2} in case $\sf E$]
  Shrinking $U \ni x$, we may assume
  that the divisor $\gamma_\alpha^* D_1$ is principal on the open set
  $\gamma_\alpha^{-1}(U)$. Then we set $V = \gamma_\alpha^{-1}(U)$ and $\gamma
  = \gamma_\alpha|_V$. It remains to show that the divisors $\gamma_\alpha^*
  D_0$ and $\gamma_\alpha^* D_1$ are both irreducible.
  
  Assuming the contrary, $\gamma_\alpha^* D$ would have at least three distinct
  components, all of which contain $\gamma_\alpha^{-1}(x)$. Since $x \in X_\sg$,
  the fiber $\gamma_\alpha^{-1}(x)$ consists of a single point. So three
  components of $\gamma_\alpha^* D$ would intersect in a point. This, however,
  contradicts Assumption \ref{awlog:723}.
\end{proof}

We have proven Claim~\ref{clm:step 2} in cases $\sf A$--$\sf E$, and these are all the cases. Step~2 is thus finished.

\subsubsection*{Step 3: Construction of $\beta$}

Again, we formulate the statement of Step 3 in a separate claim.

\begin{clm} \label{clm:step 3}
In the setting of Claim \ref{clm:step 2}, there exists a finite morphism $\beta\!: W \to V$ with the following properties.
\begin{enumerate}
\item\label{itm:step 3.1} The map $\beta$ is adapted for $(V, \gamma^* D)$, and the branch locus of $\beta$ is exactly $\supp \{ D_V \}$.
\item\label{itm:step 3.2} The pair $(W, \beta^* \rd \gamma^* D.)$ is reduced and snc.
\item\label{itm:step 3.3} The divisor $E_0 := \beta^{-1}(\gamma^{-1}(D_0))$ is irreducible.
\end{enumerate}
\end{clm}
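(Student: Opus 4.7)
The plan is to prove Claim~\ref{clm:step 3} by splitting into two cases according to whether $\supp D_V$ is empty or not, and in the nonempty case, applying Lemma~\ref{lem:ex adapt} with the already-prepared data from Claim~\ref{clm:step 2}.

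\textbf{Case 1: $D_V = 0$.} Here I would simply set $W := V$ and $\beta := \id_V$. Since $\gamma^* D = \gamma^* D_0$ is reduced, $\beta$ is trivially adapted for $(V, \gamma^* D)$ and has empty branch locus, matching $\supp\{D_V\} = \emptyset$; this yields (\ref{clm:step 3}.\ref{itm:step 3.1}). Property (\ref{clm:step 3}.\ref{itm:step 3.2}) follows immediately from Claim~\ref{clm:step 2}(\ref{itm:step 2.1}), and property (\ref{clm:step 3}.\ref{itm:step 3.3}) is just Claim~\ref{clm:step 2}(\ref{itm:step 2.4}).

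\textbf{Case 2: $D_V \ne 0$.} I would apply Lemma~\ref{lem:ex adapt} to the $\CC$-pair $(V, \gamma^* D)$, taking the reduced component to be $\gamma^* D_0$ and the fractional component to be $\rp D_V.$ with $\CC$-multiplicity $n_1 < \infty$. The hypotheses of the lemma are all supplied by Claim~\ref{clm:step 2}: the pair $(V, \rp \gamma^* D.)$ is snc by (\ref{clm:step 2}.\ref{itm:step 2.1}), the component $\gamma^* D_0$ is irreducible by (\ref{clm:step 2}.\ref{itm:step 2.4}), and both the irreducibility and principality of $\rp D_V.$ come from (\ref{clm:step 2}.\ref{itm:step 2.5}). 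The nonemptiness $\gamma^* D_0 \cap \rp D_V. \ne \emptyset$ follows because the point $x$ of equation~\eqref{eqn:938} lies in $D_0 \cap D_1$, so any preimage of $x$ under $\gamma$ lies in $\gamma^{-1}(D_0) \cap \gamma^{-1}(D_1) = \supp \gamma^* D_0 \cap \supp \rp D_V.$.

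The lemma then produces a finite cover $\beta\!: W \to V$. Conclusion (\ref{lem:ex adapt}.\ref{itm:ex adapt.2}) of the lemma gives that $\beta$ is adapted for $(V, \gamma^* D)$, and (\ref{lem:ex adapt}.\ref{itm:ex adapt.3}) identifies its branch locus as exactly $\rp D_V. = \supp\{D_V\}$, yielding (\ref{clm:step 3}.\ref{itm:step 3.1}). Since $\rd \gamma^* D. = \gamma^* D_0$ (the other coefficient being $\frac{n_1 - 1}{n_1} < 1$), conclusion (\ref{lem:ex adapt}.\ref{itm:ex adapt.1}) asserts that $(W, \beta^* \gamma^* D_0) = (W, \beta^* \rd \gamma^* D.)$ is snc, which is (\ref{clm:step 3}.\ref{itm:step 3.2}), and that $E_0 = \supp \beta^* \gamma^* D_0$ is irreducible, which is (\ref{clm:step 3}.\ref{itm:step 3.3}).

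\textbf{Main obstacle.} The substantive work has been absorbed into Claim~\ref{clm:step 2} (for achieving the \emph{principal} and \emph{irreducible} form of $\rp D_V.$) and Lemma~\ref{lem:ex adapt} (for constructing the cyclic branched cover and verifying snc-ness of the total transform). Consequently, the only nontrivial point to justify carefully is that the hypotheses of Lemma~\ref{lem:ex adapt} are indeed met in our setting, with particular attention to the nonemptiness of $\gamma^* D_0 \cap \rp D_V.$, which I would argue as above using the common point $x$ from \eqref{eqn:938}.
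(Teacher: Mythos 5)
Your overall strategy is the same as the paper's: dispose of the trivial case by taking $\beta = \id$, and otherwise invoke Lemma \ref{lem:ex adapt}, whose hypotheses are exactly what Claim \ref{clm:step 2} was designed to supply. Your verification of those hypotheses (in particular the nonemptiness of $\gamma^* D_0 \cap \rp D_V.$ via the common point $x$ from \eqref{eqn:938}) and your translation of the lemma's conclusions into (\ref{clm:step 3}.\ref{itm:step 3.1})--(\ref{clm:step 3}.\ref{itm:step 3.3}) are correct as far as they go.

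However, your case division has a gap: you split according to $D_V = 0$ versus $D_V \ne 0$, whereas the correct dichotomy is ``$D_V$ zero or reduced'' versus ``$D_V$ non-reduced''. The case $D_V \ne 0$ but reduced genuinely occurs --- it is exactly case $\sf A$ of the proof of Claim \ref{clm:step 2}, where $(X, D)$ is snc and reduced at $x$ and $D - D_0$ contributes a reduced component through $x$, so that $D_V$ is a nonzero reduced divisor. In that situation your Case 2 breaks down: the \CC-multiplicity of the component $\supp D_V$ is $\infty$ (by the convention $\frac{\infty - 1}{\infty} = 1$), so the hypothesis $n_1 < \infty$ of Lemma \ref{lem:ex adapt} fails and the covering $\{ z^{n_1} = f \}$ is not even defined; moreover, a nontrivial cover branched along $\supp D_V$ would contradict assertion (\ref{clm:step 3}.\ref{itm:step 3.1}), since $\supp \{ D_V \} = \emptyset$ when $D_V$ is reduced. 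The fix is the one the paper uses: whenever $D_V$ is zero \emph{or reduced}, take $\beta = \id$; adaptedness is then vacuous because the adaptedness condition only constrains components of finite \CC-multiplicity, the branch locus is empty as required, and (\ref{clm:step 3}.\ref{itm:step 3.2})--(\ref{clm:step 3}.\ref{itm:step 3.3}) follow from Claim \ref{clm:step 2} exactly as in your Case 1.
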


\begin{proof}
If the divisor $D_V$ defined in Claim \ref{clm:step 2} is zero or reduced, then we may simply set $W = V$ and $\beta = \id\!: W \to V$. Otherwise, we can apply Lemma \ref{lem:ex adapt} to obtain an adapted morphism $\beta\!: W \to V$ for the pair $(V, \gamma^* D)$.
By (\ref{lem:ex adapt}.\ref{itm:ex adapt.1}), it satisfies (\ref{clm:step 3}.\ref{itm:step 3.2}) and (\ref{clm:step 3}.\ref{itm:step 3.3}).
By (\ref{lem:ex adapt}.\ref{itm:ex adapt.2}) and (\ref{lem:ex adapt}.\ref{itm:ex adapt.3}), it satisfies (\ref{clm:step 3}.\ref{itm:step 3.1}).
\end{proof}

\subsubsection*{Step 4: Definition of $\delta$, end of proof}

In order to construct the covering $(U_j)_{j \in J}$ and the maps $\delta_j\!: W_j \to U_j$ whose existence is claimed in Proposition \ref{prp:dlt adapt}, we need to show that for every point $x \in D_0$, there exists a neighborhood $x \in U$ and a map $\delta\!: W \to U$ satisfying (\ref{prp:dlt adapt}.\ref{itm:dlt adapt.1})--(\ref{prp:dlt adapt}.\ref{itm:dlt adapt.3b}).

Given any point $x \in D_0$, let $\gamma\!: V \to U$ be as in Claim~\ref{clm:step 2}, and let $\beta\!: W \to V$ be as in Claim~\ref{clm:step 3}. Set $\delta = \gamma \circ \beta\!: W \to U$. We show that $\delta$ satisfies properties (\ref{prp:dlt adapt}.\ref{itm:dlt adapt.1})--(\ref{prp:dlt adapt}.\ref{itm:dlt adapt.3b}). 
Property (\ref{clm:step 3}.\ref{itm:step 3.2}) translates to (\ref{prp:dlt adapt}.\ref{itm:dlt adapt.1}). Since $\gamma$ is \'etale in codimension one, (\ref{clm:step 3}.\ref{itm:step 3.1}) implies (\ref{prp:dlt adapt}.\ref{itm:dlt adapt.2}). And (\ref{clm:step 3}.\ref{itm:step 3.2}) combined with (\ref{clm:step 3}.\ref{itm:step 3.3}) yields (\ref{prp:dlt adapt}.\ref{itm:dlt adapt.3a}).

We turn to (\ref{prp:dlt adapt}.\ref{itm:dlt adapt.3b}), which says that $\delta|_{E_0}\!: E_0 \to D_0 \cap U$ is an adapted morphism for the pair $(D_0 \cap U, D_0^c|_{D_0 \cap U})$.
This means that if $R \subset D_0 \cap U$ is any irreducible divisor in $\supp \{ D_0^c \}$ of \CC-multiplicity $\ell < \infty$, and $T \subset E_0$ is an irreducible divisor in $E_0$ which surjects onto $R$, then $T$ appears in $(\delta|_{E_0})^* R$ with multiplicity exactly $\ell$.

Let $m$ be the index of $K_X + D_0$ at the general point of $R$. Let $n$ be the \CC-multiplicity of the unique component of $D - D_0$ passing through $R$, or set $n = 1$ if there is no such component. Then by Example \ref{exm:547}, we have $\ell = m \cdot n$. In particular, $n < \infty$.

We will calculate the multiplicity of $T$ in $(\delta|_{E_0})^* R$. We use that $D_0$ is smooth at the general point of $R$, and that $\delta|_{E_0}$ decomposes as
\[ \delta|_{E_0} = \gamma|_{\gamma^{-1}(D_0)} \circ \beta|_{E_0}. \]
\begin{itemize}
\item By (\ref{clm:step 2}.\ref{itm:step 2.2}) and (\ref{clm:step 2}.\ref{itm:step 2.3}), every component of $\bigl(\gamma|_{\gamma^{-1}(D_0)}\bigr)^* R$ has multiplicity $m$.

\item If $n = 1$, then $\beta|_{E_0}$ is unramified at the general point of $T$, because $\beta$ branches only over $D_V$ by (\ref{clm:step 3}.\ref{itm:step 3.1}). 

\item If $1 < n < \infty$, then by (\ref{clm:step 3}.\ref{itm:step 3.1}) we get that $\beta|_{E_0}$ is ramified along $T$ of order $n$.
\end{itemize}
Hence the multiplicity of $T$ in $(\delta|_{E_0})^* R$ is $m \cdot n = \ell$. Claim (\ref{prp:dlt adapt}.\ref{itm:dlt adapt.3b}) is thus shown.
This finishes the proof of Proposition \ref{prp:dlt adapt}. \qed

\subsection{Proof of Theorem \ref{thm:res of symm diff}} \label{subsec:652}

We will only prove the existence of the residue map as asserted in (\ref{thm:res of symm diff}.\ref{itm:6.12.1}). The proof for the restriction map, (\ref{thm:res of symm diff}.\ref{itm:6.12.2}), is analogous. So let $U \subset X$ be an open set, and let $\sigma \in \mathit\Gamma \big( U, \Sym_\CC^{[k]} \Omega_X^p(\log D) \big)$ be any differential form. To simplify notation, we may shrink $X$ and assume that $\sigma$ is globally defined.

Recall from Proposition \ref{prp:dlt adjunction}.\ref{itm:6.8.1+eps} that if $W$ is any component of the non-snc locus of $(X, \rp D.)$ such that $W \cap D_0$ is a divisor in $D_0$, then $W = W \cap D_0$ and $W$ appears in $\supp \rp D_0^c.$. Hence considering the $k$-th symmetric power of the residue map on the snc locus of $(X, \rp D.)$, we obtain a section
\[ \widetilde\sigma \in H^0 \bigl( D_0, \bigl( \Sym^{[k]} \Omega_{D_0}^{p-1} \bigr) (*\rp D_0^c.) \bigr) \]
with possibly arbitrarily high order poles along $\supp \rp D_0^c.$, cf.~Definition \ref{dfn:383}. We need to show that in fact $\widetilde\sigma$ only has orbifold poles, that is
\begin{sequation} \label{seq:562}
\widetilde\sigma \in H^0 \bigl( D_0, \Sym_\CC^{[k]} \Omega_{D_0}^{p-1}(\log D_0^c) \bigr) \subset H^0 \bigl( D_0, \bigl( \Sym^{[k]} \Omega_{D_0}^{p-1} \bigr) (*\rp D_0^c.) \bigr).
\end{sequation}%
We can then define $\res_{D_0}^k(\sigma) := \widetilde\sigma$. For this, we employ the criterion of \cite[Cor.~3.16]{JK11}: Inclusion \eqref{seq:562} holds if and only if the pullback of $\widetilde\sigma$ via an adapted morphism is a regular logarithmic form.

Since the sheaf $\Sym_\CC^{[k]} \Omega_{D_0}^{p-1}(\log D_0^c)$ is reflexive, we may remove a small subset from $D_0$ and assume that $D_0$ is covered by maps $\delta_j\!: W_j \to U_j$ as in Proposition \ref{prp:dlt adapt}. For any index $j \in J$, write $E_j = \delta_j^* \rd D.$, and set $E_{0,j} = \delta_j^{-1}(D_0)$. By (\ref{prp:dlt adapt}.\ref{itm:dlt adapt.3b}), $\delta_j|_{E_{0,j}}$ is an adapted morphism for the pair $(D_0 \cap U_j, D_0^c|_{D_0 \cap U_j})$. Hence by \cite[Cor.~3.16]{JK11}, to prove \eqref{seq:562} it suffices to show that
\begin{sequation} \label{seq:568}
(\delta|_{E_0})^* ( \widetilde\sigma ) \in H^0 \Bigl( E_0, \Sym^{[k]} \Omega_{E_0}^{p-1} \bigl( \log (E - E_0)\big|_{E_0} \bigr) \Bigr)
\end{sequation}%
for every $j \in J$. To see \eqref{seq:568}, note that by (\ref{prp:dlt adapt}.\ref{itm:dlt adapt.1}), the pair $(W_j, E_j)$ is snc, hence there is a residue map
\[ \begin{array}{rccl}
\res_{E_0}^k\!: & \Sym^{[k]} \Omega_{W_j}^p(\log E_j) & =   & \Sym^k \Omega_{W_j}^p(\log E_j) \\
                &                                     & \to & \Sym^k \Omega_{E_{0,j}}^{p-1}\bigl(\log (E_j - E_{0,j})\big|_{E_{0,j}}\bigr)      \\
                &                                     & =   & \Sym^{[k]} \Omega_{E_{0,j}}^{p-1}\bigl(\log (E_j - E_{0,j})\big|_{E_{0,j}}\bigr).
\end{array} \]
And by (\ref{prp:dlt adapt}.\ref{itm:dlt adapt.2}), $\delta_j$ is adapted for the pair $(U_j, D|_{U_j})$, so by the definition of \CC-differentials, we have
\[ \delta^* \sigma \in H^0 \bigl( W_j, \Sym^{[k]} \Omega_{W_j}^p(\log E_j) \bigr). \]
Recall that the standard residue map \eqref{seq:snc res map} commutes with \'etale pullback, and that $\delta_j$ is \'etale over the general point of $D_{0,j}$ by (\ref{prp:dlt adapt}.\ref{itm:dlt adapt.1}). Hence we see that the two forms
\[ (\delta_j|_{E_{0,j}})^* ( \widetilde\sigma ) \in H^0 \bigl( E_{0,j}, \Sym^{[k]} \Omega_{E_{0,j}}^{p-1}\bigl(\log (E_j - E_{0,j})\big|_{E_{0,j}}\bigr)(*\supp \delta_j|_{E_{0,j}}^* \rp D_0^c.) \bigr) \]
and
\[ \res_{E_{0,j}}^k(\delta_j^* \sigma) \in H^0 \bigl( E_{0,j}, \Sym^{[k]} \Omega_{E_{0,j}}^{p-1}\bigl(\log (E_j - E_{0,j})\big|_{E_{0,j}}\bigr) \bigr) \]
agree on an open set of $E_{0,j}$. This shows inclusion \eqref{seq:568} and ends the proof. \qed

\section{\texorpdfstring{Relative differentials on \CC-pairs}{Relative differentials on C-pairs}} \label{sec:rel diff C-pairs}

\subsection{Relative differentials in the smooth setting} \label{subsec:rel diff sm}

Consider a smooth map of smooth varieties $f\!: X \to Y$, with general fiber $F$. The relative differential sequence for $f$ \cite[Ch.~II, Prop.~8.11]{Har77} reads
\begin{sequation} \label{eqn:rel diff}
0 \to f^* \Omega_Y^1 \to \Omega_X^1 \to \Omega_{X/Y}^1 \to 0.
\end{sequation}%
For $\Omega_X^p$ with $p > 1$, there is no analogous sequence, but \eqref{eqn:rel diff} induces a filtration
\[ \Omega_X^p = \sF^0 \supset \sF^1 \supset \cdots \supset \sF^p \supset \sF^{p+1} = 0 \]
with successive quotients
\[ \sF^i / \sF^{i+1} \isom f^* \Omega_Y^i \tensor \Omega_{X/Y}^{p-i}. \]
Now let $\sA \subset \Omega_X^p$ be an invertible sheaf of $p$-forms on $X$, and let $j$ be the largest number such that $\sA \subset \sF^j$. Then $\sA$ injects into $f^* \Omega_Y^j \tensor \Omega_{X/Y}^q$, where $q = p - j$. Restricting to the general fiber $F$ and choosing a trivialization of $f^* \Omega_Y^j\big|_F$, we obtain embeddings
\begin{sequation} \label{eqn:emb}
\sA|_F \inj \Omega_F^q.
\end{sequation}%
These embeddings are not canonical, but the number $q$ is.

\subsection{Main result of this section}

We will generalize the construction described above in three different directions. Firstly, we allow singular varieties, and accordingly we consider Weil divisorial rather than invertible sheaves. Secondly, we also deal with symmetric differential forms. Lastly, everything is formulated in the setting of \CC-pairs.

\begin{prp} \label{prp:rel diff C-pairs}
Let $(X, D)$ be a \CC-pair, with $X$ irreducible, and let $f\!: X \to Y$ be a surjective morphism to a variety $Y$, with general fiber $F$. Then $F$ is normal and $(F, D|_F)$ is a \CC-pair.
Let $\sA \subset \Sym_\CC^{[1]} \Omega_X^p(\log D)$ be a Weil divisorial subsheaf. Then there exists a number $0 \le q \le p$ and a sequence of embeddings
\[ \alpha_k\!: \bigl( \Sym_\CC^{[k]} \sA \bigr)\big|_F^{**} \inj \Sym_\CC^{[k]} \Omega_F^q(\log D|_F) \]
for all $k \in \N$. Setting
\[ X^\circ := X_\sm \minus \supp \rp D. \quad \text{and} \quad F^\circ := F \cap X^\circ, \]
noting that
\[ \bigl( \Sym_\CC^{[k]} \sA \bigr)\big|_{F^\circ}^{**} = \Sym^k (\sA|_{F^\circ}) \quad \text{and} \quad \Sym_\CC^{[k]} \Omega_{F^\circ}^q(\log D|_{F^\circ}) = \Sym^k \Omega_{F^\circ}^q, \]
and that $\sA|_{X^\circ}$ is invertible, the following compatibility properties (\ref{prp:rel diff C-pairs}.\ref{itm:cp1}--\ref{itm:cp2}) hold.
\begin{enumerate}
\item\label{itm:cp1} The map $\alpha_1|_{F^\circ}$ is one of the maps constructed in Section \ref{subsec:rel diff sm}.

\item\label{itm:cp2} For any $k$, the sheaves $\img(\alpha_k)$ and $(\img \alpha_1)^{[k]}$ generically agree as subsheaves of $\Sym_\CC^{[k]} \Omega_F^q(\log D|_F)$, i.e.~we have a commutative diagram
\[ \xymatrix{
\Sym^k (\sA|_{F^\circ}) \ar@{=}[r] \ar@{=}[d] & \Sym^k (\sA|_{F^\circ}) \ar@{=}[d] \\
\Sym^k \bigl( (\Sym_\CC^{[1]} \sA )\big|_{F^\circ}^{**} \bigr) \ar[d]_-{\Sym^k \alpha_1} &
  \bigl( \Sym_\CC^{[k]} \sA \bigr)\big|_{F^\circ}^{**} \ar[d]^-{\alpha_k} \\
\Sym^k \bigl( \Sym_\CC^{[1]} \Omega_{F^\circ}^q(\log D|_{F^\circ}) \bigr) \ar@{=}[d] &
  \Sym_\CC^{[k]} \Omega_{F^\circ}^q(\log D|_{F^\circ}) \ar@{=}[d] \\
\Sym^k \Omega_{F^\circ}^q \ar@{=}[r] & \Sym^k \Omega_{F^\circ}^q.
} \]
\end{enumerate}
\end{prp}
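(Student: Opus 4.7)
The strategy is to construct the embeddings $\alpha_k$ first on the smooth unramified open set $F^\circ$ via the relative cotangent filtration of Section \ref{subsec:rel diff sm}, then extend to all of $F$ using Lemma \ref{lem:ext morphisms} together with the adapted-morphism characterization of $\CC$-differentials.

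For $y \in Y$ sufficiently general, generic smoothness of $f|_{X_\sm}$ makes $F \cap X_\sm$ smooth, so $F$ is normal by Serre's criterion. Generic transversality of $f$ with the finitely many components of $\supp D$ endows $(F, D|_F)$ with the structure of a $\CC$-pair whose coefficients coincide with those of $(X, D)$. Next I apply the construction of Section \ref{subsec:rel diff sm} to the smooth morphism $f|_{X^\circ}\!: X^\circ \to Y$ over a Zariski-neighborhood of $y$: it yields a filtration of $\Omega_{X^\circ}^p$ with graded pieces $f^*\Omega_Y^i \tensor \Omega_{X^\circ/Y}^{p-i}$. Letting $j$ be maximal with $\sA|_{X^\circ} \subset \sF^j$, setting $q := p - j$, restricting to $F^\circ$, and trivializing $f^*\Omega_Y^j|_{F^\circ}$, I obtain the embedding $\alpha_1^\circ\!: \sA|_{F^\circ} \inj \Omega_{F^\circ}^q$; its $k$-th symmetric powers provide $\beta_k^\circ\!: \Sym^k(\sA|_{F^\circ}) \inj \Sym^k \Omega_{F^\circ}^q$ for every $k$.

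The core step is to extend each $\beta_k^\circ$ to a map $\alpha_k\!: (\Sym_\CC^{[k]} \sA)|_F^{**} \inj \Sym_\CC^{[k]} \Omega_F^q(\log D|_F)$ on all of $F$. Since both source and target are reflexive, Lemma \ref{lem:ext morphisms} reduces this to constructing $\alpha_k$ on a big open subset $F' \subset F$, injectivity then following automatically from injectivity over the dense open $F^\circ$. The subtlety is that $F^\circ$ is not big in $F$: its complement includes the codimension-one locus $F \cap \supp \rp D.$. I would bridge this gap by covering the generic points of $F \cap \supp \rp D.$ by Zariski-opens $U \subset F$ admitting adapted morphisms $\gamma_U\!: Z_U \to U$ for $(F, D|_F)$, chosen -- by generality of $F$ -- as restrictions to general fibers of adapted morphisms $\gamma\!: \widetilde X \to X$ for $(X, D)$. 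Since $\sA \subset \Sym_\CC^{[1]} \Omega_X^p(\log D)$, its $\gamma$-pullback lies in $\Omega_{\widetilde X}^p(\log \gamma^* \rd D.)$, so the filtration-and-trivialization construction carries over verbatim in the honest logarithmic setting on $\widetilde X$ and restricts to logarithmic embeddings on $Z_U$. The adapted-morphism characterization \cite[Cor.~3.16]{JK11} then shows that $\beta_k^\circ$ extends across $U$ with image in $\Sym_\CC^{[k]} \Omega_F^q(\log D|_F)$, and these local extensions glue to a map on a big open $F' \subset F$. Properties (\ref{prp:rel diff C-pairs}.\ref{itm:cp1})--(\ref{prp:rel diff C-pairs}.\ref{itm:cp2}) hold on $F^\circ$ by construction and extend to all of $F$ by reflexivity.

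The principal obstacle is this last extension across the codimension-one locus $F \cap \supp \rp D.$: it requires both that adapted morphisms for $(X, D)$ restrict on general fibers to adapted morphisms for $(F, D|_F)$ -- using the equidimensionality condition in Definition \ref{dfn:adapted mor} together with generic flatness -- and that the relative cotangent filtration on $X^\circ$ pulls back compatibly to its logarithmic analog on such covers.
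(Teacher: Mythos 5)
Your overall strategy differs from the paper's in one essential respect, and the difference is where the gap lies. The paper does not build the embedding on $F^\circ$ and then push it across $F \cap \supp \rp D.$; instead it first discards a \emph{small} subset of $X$ (whose trace on $F$ is again small) so that $(X, \rp D.)$ becomes snc and all relevant sheaves locally free, then saturates $\sA$ inside the \emph{logarithmic} sheaf $\Omega_X^p(\log \rp D.)$ and applies the Reduction Lemma to the relative logarithmic differential sequence $0 \to f^* \Omega_Y^1 \to \Omega_X^1(\log \rp D.) \to \Omega_{X/Y}^1(\log \rp D.) \to 0$. This yields honest sheaf maps $\beta_k\colon (\sB^{\tensor k})|_F \inj \Sym^k \Omega_F^q(\log \rp D|_F.)$ defined on all of $F$ minus a small set, in particular along $F \cap \supp \rp D.$. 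The only remaining issue is then a pole-order bound --- that $\beta_k$ carries $\Sym_\CC^k \sA$ into $\Sym_\CC^k \Omega_F^q(\log D|_F)$ --- which the paper verifies by an explicit computation in local analytic coordinates with the generators $x_1^{-\rd m_1\cdot(1-1/n_1).} \dif x_{I_1} \cdots \dif x_{I_k}$ of the orbifold symmetric differentials.

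In your proposal the map $\alpha_1^\circ$ is built only from the non-logarithmic filtration of $\Omega_{X^\circ}^p$, so what must be extended across the codimension-one locus $F \cap \supp \rp D.$ is the \emph{map} itself, not merely a section of a reflexive sheaf; Lemma \ref{lem:ext morphisms} only applies once the map exists on a big open subset of $F$. Along the reduced components of $D$ the adapted cover may be taken \'etale (Definition \ref{dfn:adapted mor} imposes no condition when $n_i = \infty$), so passing to it gives no leverage there: you still need the filtration of $\Omega_X^p(\log \rd D.)$ over a neighbourhood of the generic points of $F \cap \rd D.$, which is precisely the saturation-plus-logarithmic-sequence step you have omitted. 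Along the fractional components, the claim that the construction ``carries over verbatim'' on the cover conceals the two real verifications: that the composed map $\widetilde X \to Y$ admits a relative logarithmic differential sequence near the ramification divisor whose induced filtration and (non-canonical) trivialization of $(f\circ\gamma)^* \Omega_Y^j$ are compatible under $\gamma$ with the choices made on $X^\circ$, and that the resulting projection of $\gamma^* \sigma$ restricts to a \emph{regular} logarithmic form on the fiber of the cover. These are exactly the statements that the paper's coordinate computation establishes; without them the appeal to \cite[Cor.~3.16]{JK11} proves nothing. The route through adapted covers can probably be completed, but as written the central pole-order estimate --- the entire content of the proposition --- is asserted rather than proved.
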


\begin{proof}
The proof is divided into four steps.

\subsubsection*{Simplifying assumptions}
By Lemma \ref{lem:ext morphisms}, it suffices to construct the maps $\alpha_k$ outside a small subset of $F$. An easy dimension count shows that if $Z \subset X$ is any small subset, then $Z \cap F$ is a small subset of $F$. Hence we may remove the non-snc locus of $(X, \rp D.)$ from $X$, as well as the intersection of any two distinct components of $\supp D$. Furthermore, we may apply generic smoothness \cite[Rem.~2.11]{GKKP11} to $f$. Put together, we are free to make the following simplifying assumptions.
\begin{awlog} \label{awlog:768}
The pair $(X, \rp D.)$ is snc, $\supp D$ is locally irreducible, $Y$ is smooth, and $\rp D.$ is relatively snc over $Y$.
\end{awlog}
In particular, all the sheaves occurring will be locally free, so we do not need to take double duals, allowing us to drop the usual square brackets $^{[\cdot]}$.

\subsubsection*{Construction of an embedding as in Section \ref{subsec:rel diff sm}}

Let $\sB$ be the saturation of $\sA$ in $\Omega_X^p(\log \rp D.)$. Consider the relative log differential sequence \cite[(10.1)]{GKKP11}
\[ 0 \to f^* \Omega_Y^1 \to \Omega_X^1 (\log \rp D.) \to \Omega_{X/Y}^1 (\log \rp D.) \to 0. \]
By Assumption \ref{awlog:768}, all the sheaves in this sequence are locally free. Hence by Lemma \ref{lem:reduction}, $\sB$ injects into $f^* \Omega_Y^i \tensor \Omega_{X/Y}^{p-i} (\log \rp D.)$ for some $0 \le i \le p$. Restricting to $F$, we see that
\[ \sB|_F \inj \bigl( \Omega_F^q (\log \rp D|_F.) \bigr)^{\oplus \rk \Omega_Y^i} \]
for $q = p - i$. Note that this $q$ is the same number as in Section \ref{subsec:rel diff sm}. Choose a summand of the sheaf on the right-hand side such that the projection to this summand, restricted to $\sB|_F$, is nonzero. This way,  we obtain an embedding
\[ \beta\!: \sB|_F \inj \Omega_F^q (\log \rp D|_F.). \]

\subsubsection*{Construction of the maps $\alpha_k$}
Now let $k$ be any natural number. By Lemma \ref{lem:gen eq}, we have an inclusion
\[ \Sym_\CC^k \sA \subset \sB^k := \sB^{\tensor k} \subset \Sym^k \Omega_X^p(\log \rp D.). \]
Taking symmetric powers of $\beta$, we also get embeddings
\[ \beta_k\!: \underbrace{(\sB^k)|_F}_{= (\sB|_F)^k} \inj \Sym^k \Omega_F^q (\log \rp D|_F.). \]
We will show that the restriction of $\beta_k$ to the subsheaf $(\Sym_\CC^k \,\sA)|_F \subset (\sB^k)|_F$ factors via $\Sym_\CC^k \,\Omega_F^q(\log D|_F)$, as indicated in the following diagram.
\begin{sequation} \label{seq:607}
\xymatrix{
(\sB^k)|_F \ar@{ ir->}[rr]^-{\beta_k}                        & & \Sym^k \Omega_F^q (\log \rp D|_F.)                    \\
(\Sym_\CC^k \,\sA)|_F \ar@{-->}[rr]^-{\alpha_k} \ar@{ ir->}[u] & & \Sym_\CC^k \,\Omega_F^q(\log D|_F) \ar@{ ir->}[u]     \\
}
\end{sequation}%
It is then clear that the maps $\alpha_k$ defined by the above diagram are embeddings and satisfy the compatibility conditions (\ref{prp:rel diff C-pairs}.\ref{itm:cp1})--(\ref{prp:rel diff C-pairs}.\ref{itm:cp2}).

\subsubsection*{Calculation in local coordinates}
The existence of diagram \eqref{seq:607} is a statement about pole orders, which can be checked by an elementary calculation in local analytic coordinates.\
\PreprintAndPublication{
So from now on, we will assume the following, additionally to Assumption \ref{awlog:768}:
\begin{awlog} \label{awlog:793}
The following hold.
\begin{enumerate}
\item $F$ is open in $\C^n$ with coordinates $z_1, \dots, z_n$.
\item $Y$ is open in $\C^m$ with coordinates $y_1, \dots, y_m$.
\item $X = Y \x F$ is open in $\C^{n+m}$ with coordinates $x_1, \dots, x_{n+m}$.
\item The map $f\!: X \to Y$ is given by projection to the first factor.
\item $\mathrm{Supp} \, D = \{ x_1 = 0 \}$.
\item The sheaf $\sB$ is isomorphic to $\O_X$.
\item \label{itm:803} $D$ is non-reduced.
\end{enumerate}
\end{awlog}
Assumption (\ref{awlog:793}.\ref{itm:803}) is justified by the observation that if $D$ is reduced, we can simply set $\alpha_k = \beta_k$.

First we introduce some notation. By an \emph{$r$-$\ell$-multi-index} we mean an element $I = (i_1, \dots, i_r) \in \{ 1, \dots, \ell \}^r$ with $i_1 < \dots < i_r$. The set $\cI_r^\ell$ of $r$-$\ell$-multi-indices is endowed with the lexicographical ordering ``$\le$''.
Set $\delta x_1 = \dif x_1/x_1$ and $\delta x_i = \dif x_i$ for $i \ge 2$. Analogously for $\delta z_i$.
For any multi-index $I \in \cI_r^{n+m}$, we define $\dif x_I = \dif x_{i_1} \wedge \dots \wedge \dif x_{i_r}$ and $\delta x_I = \delta x_{i_1} \wedge \dots \wedge \delta x_{i_r}$. Analogously for $\delta z_I$, where $I \in \cI_r^n$.
Thus $\Omega_X^r(\log \rp D.)$ is freely generated by $\{ \delta x_I \,|\, I \in \cI_r^{n+m} \}$, and $\Omega_F^r(\log \rp D.)$ is freely generated by $\{ \delta z_I \,|\, I \in \cI_r^n \}$.

Let $\sigma_0 = \sum_{I \in \cI_p^{n+m}} b_I \delta x_I$ be a generator of $\sB$, and let $J$ be a multi-index such that $b_J$ is not constantly zero. We may uniquely write $J = (J_1, J_2)$ where $J_1 \in \cI_q^n$ and all the entries of $J_2$ are $\ge n + 1$. The map $\beta\!: \sB|_F \inj \Omega_F^q(\log \rp D|_F.)$ is then given by
\[ \sigma = \sum_{I \in \cI_p^{n+m}} a_I \delta x_I \mapsto \sum_{I \in \cI_q^n} a_{(I, J_2)}|_F \,\delta z_I. \]
The induced maps $\beta_k\!: (\sB^k)|_F \inj \Sym^k \Omega_F^q(\log \rp D|_F.)$ are defined by the property that $\beta_k(\sigma_0^k) = \beta(\sigma_0)^k$, which in local coordinates reads
\begin{sequation} \label{eqn:beta_k}
\sigma = \sum_{\substack{I_1 \le \cdots \le I_k \\[.5ex] I_j \in \cI_p^{n+m}}} a_{I_1, \dots, I_k} \delta x_{I_1} \cdots \delta x_{I_k} \mapsto \sum_{\substack{I_1 \le \cdots \le I_k \\[.5ex] I_j \in \cI_q^n}} a_{(I_1, J_2), \dots, (I_k, J_2)}|_F \,\delta z_{I_1} \cdots \delta z_{I_k}.
\end{sequation}%
Let $n_1 < \infty$ be the \CC-multiplicity of $D$. A calculation in the spirit of \cite[Sec.~3.C]{JK11} shows that $\Sym_\CC^k \,\Omega_X^p(\log D)$ is freely generated by
\begin{sequation} \label{eqn:generators X}
\left\{ x_1^{- \rd m_1 \cdot (1 - \frac 1{n_1}).} \dif x_{I_1} \cdots \dif x_{I_k} \;\middle|\; I_1 \le \cdots \le I_k \in \cI_p^{n+m} \right\},
\end{sequation}%
where $m_1 = \#\{ \lambda \;|\; I_\lambda(1) = 1 \}$.
Similarly, $\Sym_\CC^k \,\Omega_F^q(\log D|_F)$ is freely generated by
\begin{sequation} \label{eqn:generators F}
\left\{ z_1^{- \rd m_1 \cdot (1 - \frac 1{n_1}).} \dif z_{I_1} \cdots \dif z_{I_k} \;\middle|\; I_1 \le \cdots \le I_k \in \cI_q^n \right\}.
\end{sequation}%
From \eqref{eqn:beta_k}--\eqref{eqn:generators F}, it is immediate that if $\sigma$ is a local section of $\Sym_\CC^k \,\sA \subset \sB^k \cap \Sym_\CC^k \,\Omega_X^p(\log D)$, then $\beta_k(\sigma) \in \Sym_\CC^k \,\Omega_F^q(\log D|_F)$.
}{
For the details, we refer to the preprint version of this paper, which may be found on the arXiv.
}
\end{proof}

\section{Minimal dlt models} \label{sec:min dlt models}

In this section, we consider certain partial resolutions of log canonical pairs, called \emph{minimal dlt models}. We will use these in the proof of Theorem \ref{thm:lc BS van}.

\begin{dfn}[Minimal dlt model] \label{dfn:mindltmodel}
Let $(X, D)$ be a log canonical pair, and let $f\!: Z \to X$ be a proper birational morphism from a normal variety $Z$. Let $E$ denote the divisorial part of $\Exc(f)$ with its reduced structure. We say that $f\!: Z \to X$ is a \emph{minimal dlt model} of $(X, D)$ if the following properties hold.
\begin{enumerate}
\item\label{itm:mindlt1} The variety $Z$ is $\Q$-factorial.
\item\label{itm:mindlt2} The pair $(Z, f^{-1}_{\;\;\;*} D + E)$ is dlt.
\item\label{itm:mindlt3} Over the snc locus of $(X, \rp D.)$, the map $f$ is an isomorphism.
\item\label{itm:mindlt4} The map $f$ only extracts divisors of discrepancy $-1$, i.e.~the ramification formula reads
\[ K_Z + f^{-1}_{\;\;\;*} D = f^*(K_X + D) - E. \]
\end{enumerate}
\end{dfn}

The first thing we need to know about minimal dlt models is that they exist. This is due to Hacon and ultimately to \cite{BCHM10}. For a reference, see \cite[Thm.~3.1]{KK10}.

\begin{thm}[Existence of minimal dlt models] \label{thm:ex mindltmodel}
Let $(X, D)$ be a log canonical pair. Then $(X, D)$ has a minimal dlt model. \qed
\end{thm}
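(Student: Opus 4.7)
The plan is to construct $f\!: Z \to X$ by running a suitable relative minimal model program, following the standard strategy that goes back to \cite{BCHM10}.

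First, I would take a log resolution $\pi\!: Y \to X$ of $(X, D)$ which is an isomorphism over the snc locus of $(X, \rp D.)$; this exists by Hironaka. Let $E_1, \dots, E_r$ denote the $\pi$-exceptional prime divisors and write $a_i := a(E_i, X, D) \ge -1$, the lower bound being the log canonicity assumption. Set
\[ \Delta_Y := \pi^{-1}_{\;\;\;*} D + \sum_{i=1}^r E_i \qquad \text{and} \qquad F := \sum_{i=1}^r (1 + a_i) E_i, \]
so that the ramification formula reads $K_Y + \Delta_Y = \pi^*(K_X + D) + F$, with $F$ effective and $\pi$-exceptional. By construction $(Y, \Delta_Y)$ is snc, in particular dlt, and $Y$ is smooth, in particular $\Q$-factorial.

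Next, I would run a $(K_Y + \Delta_Y)$-MMP over $X$. Because $K_Y + \Delta_Y \sim_{\Q, X} F$ is effective and $\pi$-exceptional, each step contracts something supported in (the strict transform of)~$F$, and in particular is an isomorphism away from $\pi^{-1}(\pi(\supp F))$. Existence and termination of this MMP in the dlt setting is the essential input; it is non-trivial and due to Hacon, relying ultimately on \cite{BCHM10}. I would simply quote it as a black box from \cite[Thm.~3.1]{KK10}. The outcome is a proper birational morphism $f\!: Z \to X$ from a $\Q$-factorial normal variety $Z$, together with the strict transform $\Delta_Z$ of $\Delta_Y$, such that $(Z, \Delta_Z)$ is dlt and $K_Z + \Delta_Z$ is $f$-nef. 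This establishes conditions~\textup{(1)} and~\textup{(2)} of Definition~\ref{dfn:mindltmodel}.

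Pushing $F$ forward to $Z$, one obtains the identity $K_Z + \Delta_Z = f^*(K_X + D) + F_Z$, where $F_Z$ denotes the strict transform of $F$; it is effective, $f$-exceptional, and $f$-nef (being $f$-numerically equivalent to $K_Z + \Delta_Z$). The classical Negativity Lemma \cite[Lem.~3.39]{KM98} then forces $F_Z = 0$, which is precisely condition~\textup{(4)}: only those $E_i$ with $a_i = -1$ survive as $f$-exceptional divisors on $Z$. Finally, condition~\textup{(3)} is automatic: since $\pi$ was chosen to be an isomorphism over the snc locus of $(X, \rp D.)$, no $\pi$-exceptional divisor lies above that open set, so the MMP has nothing to do there and $f$ inherits the property.

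The only genuine obstacle is the termination of the dlt MMP invoked in the second step, which is taken as a deep input from the literature; everything else is routine birational geometry.
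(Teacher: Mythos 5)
The paper offers no proof of this statement at all---it simply quotes the result from \cite[Thm.~3.1]{KK10} (due to Hacon, relying ultimately on \cite{BCHM10})---and your sketch is precisely the standard argument behind that citation (log resolution, relative $(K_Y+\Delta_Y)$-MMP over $X$, the classical Negativity Lemma to kill the divisors of discrepancy $>-1$, and the observation that every MMP step takes place over the non-snc locus), so it is correct in outline and each of the four conditions of Definition~\ref{dfn:mindltmodel} is verified properly. One caveat: \cite[Thm.~3.1]{KK10} \emph{is} the existence statement for dlt models, so invoking it for the ``existence and termination of this MMP'' step makes your write-up circular if read as a self-contained proof; the honest black box is the termination of the relative MMP with scaling from \cite{BCHM10} combined with special termination in the spirit of Hacon's argument.
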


We will state and prove the properties of minimal dlt models $f\!: Z \to X$ which are relevant to us in two lemmas. The first one concerns the absolute properties of $Z$, while the latter deals with the fibers of $f$.

\begin{lem}[Minimal dlt models I] \label{lem:mindltmodel 1}
Let $(X, D')$ be a log canonical pair, and let $f\!: Z \to X$ be a minimal dlt model of $(X, D')$.
Denote by $E_1, \dots, E_k$ the codimension-one components of $\Exc(f)$, and let $E = E_1 + \cdots + E_k$. Let $D \le D'$ be a divisor such that $(X, D)$ is a \CC-pair. Set $D_Z := f^{-1}_{\;\;\;*} D + E$. Then
\begin{enumerate}
\item\label{itm:mindlt1.1} the pair $(Z, D_Z)$ is a dlt \CC-pair, and
\item\label{itm:mindlt1.2} $K_Z + D_Z \sim_{\R, X} - f^{-1}_{\;\;\;*} (D' - D)$.
\end{enumerate}
For any $1 \le i \le k$, let $E_i^c := \Diff_{E_i}(D_Z - E_i)$ denote the different, as in Proposition/Definition \ref{prpdfn:different}.
Then the following properties hold.
\begin{enumerate}
\setcounter{enumi}{2}
\item\label{itm:mindlt1.3} The pair $(E_i, E_i^c)$ is a dlt \CC-pair.
\item\label{itm:mindlt1.4} $K_{E_i} + E_i^c$ is $\R$-linearly equivalent over $f(E_i)$ to an anti-effective divisor.
\end{enumerate}
\end{lem}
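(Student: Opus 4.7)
The plan is to handle the four assertions in sequence, essentially unpacking the definition of a minimal dlt model and then applying the dlt adjunction result (Proposition~\ref{prp:dlt adjunction}) together with the defining property of the different (Proposition/Definition~\ref{prpdfn:different}).

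For (\ref{lem:mindltmodel 1}.\ref{itm:mindlt1.1}), I would treat the two properties separately. The \CC-pair property is immediate: the strict transform $f^{-1}_{\;\;\;*}D$ inherits the coefficients $\frac{n-1}{n}$ from $D$, and each exceptional prime $E_i$ enters $D_Z$ with coefficient $1 = \frac{\infty-1}{\infty}$. For dlt-ness, I would observe that $(Z, f^{-1}_{\;\;\;*}D' + E)$ is dlt by item (\ref{dfn:mindltmodel}.\ref{itm:mindlt2}) of the definition of a minimal dlt model, and that $D_Z \le f^{-1}_{\;\;\;*}D' + E$ since $D \le D'$. Replacing the boundary of a dlt pair by a smaller effective sub-boundary preserves the dlt property: the snc locus is unchanged (both boundaries have the same support), and all relevant discrepancies only go up when the boundary shrinks.

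For (\ref{lem:mindltmodel 1}.\ref{itm:mindlt1.2}), I would substitute the ramification formula from (\ref{dfn:mindltmodel}.\ref{itm:mindlt4}), namely $K_Z + f^{-1}_{\;\;\;*}D' = f^*(K_X + D') - E$, into the expression $K_Z + D_Z = K_Z + f^{-1}_{\;\;\;*}D + E$. A one-line manipulation yields
\[ K_Z + D_Z = f^*(K_X + D') - f^{-1}_{\;\;\;*}(D' - D), \]
and the claim follows since $f^*(K_X + D')$ is (by definition) an $\R$-linear combination of pullbacks of Cartier divisors from $X$.

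Assertion (\ref{lem:mindltmodel 1}.\ref{itm:mindlt1.3}) is then a direct application of Proposition~\ref{prp:dlt adjunction}(\ref{itm:6.8.1+eps/2}) to the dlt \CC-pair $(Z, D_Z)$ from part~(\ref{lem:mindltmodel 1}.\ref{itm:mindlt1.1}), taking $D_0 = E_i$. For (\ref{lem:mindltmodel 1}.\ref{itm:mindlt1.4}), I would restrict the relation from (\ref{lem:mindltmodel 1}.\ref{itm:mindlt1.2}) to $E_i$ and combine with the defining property of the different, which gives $(K_Z + D_Z)\big|_{E_i} \sim_\R K_{E_i} + E_i^c$. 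This yields
\[ K_{E_i} + E_i^c \sim_{\R,\, f(E_i)} - f^{-1}_{\;\;\;*}(D' - D)\big|_{E_i}, \]
and the right-hand side is anti-effective because $D' \ge D$ and $E_i$, being $f$-exceptional, is not a component of the strict transform $f^{-1}_{\;\;\;*}(D' - D)$. The one place demanding a bit of care is verifying that this last restriction is a well-defined $\R$-divisor on $E_i$; but the $\Q$-factoriality of $Z$ from (\ref{dfn:mindltmodel}.\ref{itm:mindlt1}) makes every Weil divisor on $Z$ $\R$-Cartier, so the restriction is an honest $\R$-Cartier divisor on $E_i$. This is really the only genuinely subtle point; the rest is essentially bookkeeping within the definitions.
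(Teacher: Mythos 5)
Your proposal is correct and takes essentially the same route as the paper: part (\ref{lem:mindltmodel 1}.\ref{itm:mindlt1.1}) via monotonicity of discrepancies for the smaller boundary $D_Z \le f^{-1}_{\;\;\;*}D' + E$ (with $\Q$-factoriality of $Z$ guaranteeing that $K_Z + D_Z$ is $\R$-Cartier), part (\ref{lem:mindltmodel 1}.\ref{itm:mindlt1.2}) by substituting the ramification formula, part (\ref{lem:mindltmodel 1}.\ref{itm:mindlt1.3}) from Proposition~\ref{prp:dlt adjunction}, and part (\ref{lem:mindltmodel 1}.\ref{itm:mindlt1.4}) by restricting (\ref{lem:mindltmodel 1}.\ref{itm:mindlt1.2}) to $E_i$ and observing that the exceptional $E_i$ is not a component of $f^{-1}_{\;\;\;*}(D'-D)$.
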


\begin{proof}
By the definition of minimal dlt model, $(Z, f^{-1}_{\;\;\;*} D' + E)$ is dlt, and
\[ K_Z + f^{-1}_{\;\;\;*} D' + E = f^*(K_X + D') \sim_{\R, X} 0. \]
Since $Z$ is $\Q$-factorial, (\ref{lem:mindltmodel 1}.\ref{itm:mindlt1.1}) follows from the monotonicity of discrepancies \cite[Lem.~2.27]{KM98}. Claim (\ref{lem:mindltmodel 1}.\ref{itm:mindlt1.2}) is clear.

Claim (\ref{lem:mindltmodel 1}.\ref{itm:mindlt1.3}) is an application of Proposition \ref{prp:dlt adjunction}. Recall from Equation \eqref{eqn:different} on Page \pageref{eqn:different} that
\[ K_{E_i} + E_i^c \sim_\Q (K_Z + D_Z)|_{E_i}. \]
Hence claim (\ref{lem:mindltmodel 1}.\ref{itm:mindlt1.4}) follows from (\ref{lem:mindltmodel 1}.\ref{itm:mindlt1.2}) upon noting that the effective divisor $f^{-1}_{\;\;\;*} (D' - D)$ does not contain any of the $E_i$'s in its support.
\end{proof}

\begin{lem}[Minimal dlt models II] \label{lem:mindltmodel 3}
In the setup of Lemma \ref{lem:mindltmodel 1}, let $1 \le i \le k$ be any index and denote by $F_i$ a general fiber of $f|_{E_i}\!: E_i \to f(E_i)$. Then:
\begin{enumerate}
\item\label{itm:mindlt3.2} $F_i$ is normal (but not necessarily connected) and the pair $(F_i, E_i^c|_{F_i})$ is a dlt \CC-pair.
\item\label{itm:mindlt3.3} $K_{F_i} + E_i^c|_{F_i}$ is $\R$-linearly equivalent to an anti-effective divisor.
\end{enumerate}
\end{lem}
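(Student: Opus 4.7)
The plan is to deduce both parts of the lemma from Lemma \ref{lem:mindltmodel 1} by restricting to a general fiber, using Bertini-type theorems and adjunction.

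For part (\ref{lem:mindltmodel 3}.\ref{itm:mindlt3.2}), I would first argue that a general fiber $F_i$ of $f|_{E_i}\!: E_i \to f(E_i)$ is normal. The divisor $E_i$ is itself normal by Proposition \ref{prp:dlt adjunction}.(\ref{itm:6.8.1}), and in characteristic zero the general fiber of a dominant morphism from a normal variety is normal by a standard Bertini-type argument: generic smoothness of $f|_{E_i}$ restricted to $E_{i,\sm}$ together with generic flatness ensure that $F_i$ is $R_1$ and $S_2$, albeit possibly disconnected if the restricted morphism has positive degree onto its image. For the dlt $\CC$-property, observe that the coefficients of $E_i^c$ already have the form $\frac{n-1}{n}$, and restriction to a fiber does not alter coefficients, so $(F_i, E_i^c|_{F_i})$ is a $\CC$-pair. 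The dlt property passes to a general fiber because on the big open set where $(E_i, E_i^c)$ is snc the restriction to $F_i$ is snc on a big open subset of $F_i$, and every log canonical center of $(F_i, E_i^c|_{F_i})$ arises from the intersection of a log canonical center of $(E_i, E_i^c)$ with $F_i$.

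For part (\ref{lem:mindltmodel 3}.\ref{itm:mindlt3.3}), I would restrict the relative equivalence from Lemma \ref{lem:mindltmodel 1}.(\ref{itm:mindlt1.4}) to $F_i$. Tracing through the proof of Lemma \ref{lem:mindltmodel 1}, we have $K_{E_i} + E_i^c \sim_{\R, f(E_i)} -f^{-1}_{\;\;\;*}(D' - D)|_{E_i}$. Unwrapping the $\sim_{\R, f(E_i)}$ as an equality modulo principal divisors and pullbacks of Cartier divisors from $f(E_i)$, and choosing $F_i$ general enough that its image avoids the supports of the finitely many divisors on $f(E_i)$ involved, the Cartier pullbacks restrict to zero on $F_i$ while principal divisors on $E_i$ restrict to principal divisors on $F_i$. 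Hence $(K_{E_i} + E_i^c)|_{F_i} \sim_{\R} -f^{-1}_{\;\;\;*}(D' - D)|_{F_i}$, which is anti-effective since $F_i$ is not contained in $\supp f^{-1}_{\;\;\;*}(D' - D)$ for general $F_i$.

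To conclude I would apply adjunction to identify $(K_{E_i})|_{F_i}$ with $K_{F_i}$. By generic flatness of $f|_{E_i}$ together with generic smoothness, there is an open subset $U \subset f(E_i)$ over which $f|_{E_i}$ restricted to $E_{i,\sm}$ is smooth. A general fiber $F_i$ then meets this relative smooth locus in a big open subset, on which the standard adjunction formula for a smooth morphism yields $K_{F_i} = K_{E_i}|_{F_i}$; since both sides are reflexive on the normal variety $F_i$, the identity extends globally. The main obstacle is expected to be the careful handling of adjunction and Bertini-type preservation in the singular setting, since $f|_{E_i}$ is not assumed flat and $E_i$ is only normal rather than smooth. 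However, because all statements concern reflexive sheaves on $F_i$, it suffices to verify them on a big open subset lying in the relative smooth locus and avoiding the various intervening singular loci, and then to invoke reflexivity to conclude on all of $F_i$.
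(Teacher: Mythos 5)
Your argument is in substance the paper's proof. For part (\ref{lem:mindltmodel 3}.\ref{itm:mindlt3.2}) the paper simply cites \cite[Lem.~2.25]{GKKP11} applied to the dlt \CC-pair $(E_i, E_i^c)$, which is exactly the general-fiber statement you re-derive; the one shaky point in your sketch is the claim that generic flatness gives the $S_2$ condition for $F_i$ --- the depth formula for a flat morphism does not yield this directly from $E_i$ being $S_2$, and the standard route (packaged in the cited lemma) is to spread out normality from the generic fiber in characteristic zero. For part (\ref{lem:mindltmodel 3}.\ref{itm:mindlt3.3}) your argument --- restrict the relative $\R$-linear equivalence of Lemma~\ref{lem:mindltmodel 1}(\ref{itm:mindlt1.4}) to a general fiber, observe that Cartier pullbacks from $f(E_i)$ die on the fiber and that the anti-effective divisor does not contain $F_i$, identify $K_{E_i}|_{F_i}$ with $K_{F_i}$ via the trivial normal bundle on the relative smooth locus, and extend across the small set $E_i^\sg \cap F_i$ by reflexivity --- coincides with the paper's.
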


\begin{proof}
Claim (\ref{lem:mindltmodel 3}.\ref{itm:mindlt3.2}) holds by \cite[Lemma 2.25]{GKKP11} applied to the pair $(E_i, E_i^c)$.
Since $E_i$ is normal, an easy dimension count shows that $\codim_{F_i} (E_i^\sg \cap F_i) \ge 2$. Hence to check (\ref{lem:mindltmodel 3}.\ref{itm:mindlt3.3}), it suffices to do so on $F_i \cap E_i^\sm$. But since the normal bundle of $F_i \cap E_i^\sm$ in $E_i^\sm$ is trivial, we have $(K_{E_i} + E_i^c)|_{F_i} = K_{F_i} + E_i^c|_{F_i}$. So the assertion follows from (\ref{lem:mindltmodel 1}.\ref{itm:mindlt1.4}).
\end{proof}

\section{\texorpdfstring{Weak Bogomolov--Sommese vanishing on dlt \CC-pairs}{Weak Bogomolov--Sommese vanishing on dlt C-pairs}} \label{sec:dlt C-pairs}

The most general known \CC-pair version of Bogomolov--Sommese vanishing, which is \cite[Thm. 7.3]{GKKP11}, works for \Q-factorial log canonical pairs. Here we would like to make the point that if we restrict ourselves to dlt pairs, we can easily dispose of the \Q-factoriality assumption. Due to the existence of \emph{$\Q$-factorializations}, this is much easier than our main result, yet we will use it in the proof of the latter.

\begin{dfn}[$\Q$-factorialization] \label{dfn:Q-fact}
Let $(X, D)$ be a dlt pair. A \emph{$\Q$-factorial\-ization} of $(X, D)$ is a proper birational morphism $f\!: Z \to X$ from a normal variety $Z$ such that the following holds.
\begin{enumerate}
\item\label{itm:Qfact1} The variety $Z$ is $\Q$-factorial.
\item\label{itm:Qfact2} The $f$-exceptional locus is a small subset of $Z$.
\item\label{itm:Qfact3} The pair $(Z, f^{-1}_{\;\;\;*} D)$ is dlt.
\end{enumerate}
\end{dfn}

\Q-factorializations are a special case of minimal dlt models. Since these exist
by Theorem \ref{thm:ex mindltmodel}, we get the following existence statement.

\begin{thm}[Existence of $\Q$-factorializations] \label{thm:ex Q-fact}
Let $(X, D)$ be a dlt pair. Then $(X, D)$ admits a $\Q$-factorialization.
\end{thm}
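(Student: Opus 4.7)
The plan is to apply Theorem \ref{thm:ex mindltmodel} to the dlt pair $(X, D)$ and check that the resulting minimal dlt model $f\!: Z \to X$ is already a $\Q$-factorialization. The key observation, which makes the argument short, is that when $(X, D)$ is itself dlt, a minimal dlt model cannot extract any divisor.

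First, Theorem \ref{thm:ex mindltmodel} furnishes a morphism $f\!: Z \to X$ satisfying the four properties of Definition \ref{dfn:mindltmodel}. Property (\ref{dfn:mindltmodel}.\ref{itm:mindlt1}) is exactly condition (\ref{dfn:Q-fact}.\ref{itm:Qfact1}) of Definition \ref{dfn:Q-fact}, so the $\Q$-factoriality of $Z$ is immediate.

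The main step is to verify condition (\ref{dfn:Q-fact}.\ref{itm:Qfact2}), i.e.\ that $f$ extracts no divisors. Suppose for contradiction that $E' \subset Z$ is an $f$-exceptional prime divisor. Then (\ref{dfn:mindltmodel}.\ref{itm:mindlt4}) forces the discrepancy $a(E', X, D)$ to equal $-1$. By (\ref{dfn:mindltmodel}.\ref{itm:mindlt3}), the map $f$ is an isomorphism over the snc locus of $(X, \rp D.)$, which coincides with the snc locus of $(X, D)$ (snc depends only on the underlying reduced divisor); hence the center of $E'$ on $X$ must lie in the non-snc locus of $(X, D)$. But the defining property of a dlt pair \cite[Def.~2.37]{KM98} is that every divisorial valuation centered on the non-snc locus has discrepancy strictly greater than $-1$, contradicting $a(E', X, D) = -1$.

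Once (\ref{dfn:Q-fact}.\ref{itm:Qfact2}) is settled, the divisor $E$ appearing in Definition \ref{dfn:mindltmodel} is zero, so property (\ref{dfn:mindltmodel}.\ref{itm:mindlt2}) reduces to the dlt-ness of $(Z, f^{-1}_{\;\;\;*} D)$, which is exactly (\ref{dfn:Q-fact}.\ref{itm:Qfact3}), and we are done. I do not foresee a serious obstacle: the entire content lies in recognizing that the dlt assumption on $(X, D)$ already excludes precisely the kind of $-1$-discrepancy exceptional divisors that a minimal dlt model would extract.
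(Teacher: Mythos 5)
Your argument is correct and is essentially the paper's own proof: both combine property (\ref{dfn:mindltmodel}.\ref{itm:mindlt3}) with the definition of dlt to force any extracted divisor to have discrepancy $> -1$, and property (\ref{dfn:mindltmodel}.\ref{itm:mindlt4}) to force it to be exactly $-1$, concluding that nothing is extracted. The remaining identifications of the conditions in Definitions \ref{dfn:mindltmodel} and \ref{dfn:Q-fact} match the paper as well.
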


\begin{proof}
By Theorem \ref{thm:ex mindltmodel}, $(X, D)$ has a minimal dlt model $f\!: Z \to X$. We will show that $f$ is a $\Q$-factorialization of $(X, D)$. Property (\ref{dfn:mindltmodel}.\ref{itm:mindlt1}) directly translates to (\ref{dfn:Q-fact}.\ref{itm:Qfact1}). The definition of a dlt pair \cite[Def.~2.37]{KM98} and (\ref{dfn:mindltmodel}.\ref{itm:mindlt3}) imply that $f$ only extracts divisors of discrepancy strictly greater than $-1$. On the other hand, by (\ref{dfn:mindltmodel}.\ref{itm:mindlt4}), $f$ only extracts divisors whose discrepancy is exactly $-1$. So $f$ does not extract any divisors at all. This is (\ref{dfn:Q-fact}.\ref{itm:Qfact2}). Finally, (\ref{dfn:Q-fact}.\ref{itm:Qfact3}) follows immediately from (\ref{dfn:mindltmodel}.\ref{itm:mindlt2}).
\end{proof}

\begin{lem}[Invariance of $\kappa_\CC$ under small morphisms] \label{lem:kappa Q-fact}
Let $(X, D)$ be a \CC-pair, with a Weil divisorial subsheaf $\ms A \subset \Sym_\CC^{[1]} \Omega_X^p(\log D)$. Let $f\!: Z \to X$ be a proper birational morphism from a normal variety $Z$, whose exceptional locus is small in~$Z$. Then $(Z, f^{-1}_{\;\;\;*} D)$ is also a \CC-pair, and there is a natural embedding of $f^{[*]} \ms A$ as a subsheaf of $\Sym_\CC^{[1]} \Omega_Z^p(\log f^{-1}_{\;\;\;*} D)$. With respect to this embedding, we have $\kappa_\CC(f^{[*]} \ms A) = \kappa_\CC(\ms A)$.
\end{lem}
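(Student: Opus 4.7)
The key point is that since $f$ is small, it is an isomorphism on a big open set, so we are essentially moving sheaves back and forth across a codimension-two locus, and reflexivity takes care of the rest. Concretely, let $V \subset X$ denote the big open subset over which $f$ is an isomorphism, and set $W := f^{-1}(V) \subset Z$; then $W$ is big in $Z$ because $\Exc(f)$ is small.

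I would first observe that $(Z, f^{-1}_{\;\;\;*} D)$ is a $\CC$-pair: the irreducible components of $D$ and of $f^{-1}_{\;\;\;*} D$ are in bijection with matching coefficients, because $f|_W$ is an isomorphism and all prime divisors meet $W$. Next, to produce the embedding $f^{[*]} \sA \inj \Sym_\CC^{[1]} \Omega_Z^p(\log f^{-1}_{\;\;\;*} D)$, I would show that
\[
  f^{[*]}\bigl(\Sym_\CC^{[1]} \Omega_X^p(\log D)\bigr) \;=\; \Sym_\CC^{[1]} \Omega_Z^p(\log f^{-1}_{\;\;\;*} D).
\]
Both sides are reflexive sheaves on $Z$ which, when restricted to $W$, pull back from the same reflexive sheaf on $V$; hence they agree on the big open set $W$, and Lemma~\ref{lem:ext morphisms} upgrades the restriction isomorphism to a global isomorphism. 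Reflexively pulling back the inclusion $\sA \subset \Sym_\CC^{[1]} \Omega_X^p(\log D)$ and invoking Lemma~\ref{lem:ext morphisms}(\ref{itm:extmor2}) to preserve injectivity gives the required embedding.

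For the equality of $\CC$-Kodaira dimensions, the central claim is that for every $m \ge 1$,
\[
  \Sym_\CC^{[m]}\bigl(f^{[*]} \sA\bigr) \;=\; f^{[*]}\bigl(\Sym_\CC^{[m]} \sA\bigr)
\]
as subsheaves of $\Sym_\CC^{[m]} \Omega_Z^p(\log f^{-1}_{\;\;\;*} D)$. By Definition~\ref{dfn:kappa_C}, the left-hand side is the saturation of $(f^{[*]} \sA)^{[m]}$ inside $\Sym_\CC^{[m]} \Omega_Z^p(\log f^{-1}_{\;\;\;*} D)$, while the right-hand side is the reflexive pullback of the saturation of $\sA^{[m]}$ inside $\Sym_\CC^{[m]} \Omega_X^p(\log D)$. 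On the big open set $W$, both sides equal the pullback of $\Sym_\CC^{[m]} \sA|_V$. The right-hand side is reflexive, and the left-hand side is saturated in $\Sym_\CC^{[m]} \Omega_Z^p(\log f^{-1}_{\;\;\;*} D)$, hence reflexive by Lemma~\ref{lem:saturation reflexive}. Lemma~\ref{lem:gen eq} (or equivalently Lemma~\ref{lem:ext morphisms}) then forces the two to coincide globally.

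Finally, since $f$ is proper and birational with $f_* \O_Z = \O_X$, pushing forward yields $f_* \bigl( f^{[*]} \Sym_\CC^{[m]} \sA \bigr) = \Sym_\CC^{[m]} \sA$ for each $m$, so
\[
  H^0\bigl(Z,\, \Sym_\CC^{[m]}(f^{[*]} \sA)\bigr) \;=\; H^0\bigl(X,\, \Sym_\CC^{[m]} \sA\bigr),
\]
and the associated rational maps $\phi_m^Z$ on $Z$ factor as $\phi_m^X \circ f$ up to composition with an isomorphism on the target projective space. In particular the closures of their images coincide dimensionally, so $\kappa_\CC(f^{[*]} \sA) = \kappa_\CC(\sA)$. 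The main obstacle in this argument is verifying that saturation commutes with reflexive pullback along a small map; this is where Lemma~\ref{lem:gen eq} together with the explicit smallness of $\Exc(f)$ does the decisive work.
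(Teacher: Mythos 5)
Your proposal is correct and follows essentially the same route as the paper: the crux in both is that $f^{[*]}(\Sym_\CC^{[m]}\sA)$ and $\Sym_\CC^{[m]}(f^{[*]}\sA)$ agree on the big open set where $f$ is an isomorphism and are both reflexive, hence coincide by Lemma~\ref{lem:ext morphisms}, which identifies the spaces of global sections and therefore the $\CC$-Kodaira dimensions. Your write-up merely spells out the $\CC$-pair structure on $Z$ and the embedding in more detail than the paper does.
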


\begin{proof}
The natural embedding $f^{[*]} \ms A \inj \Sym_\CC^{[1]} \Omega_Z^p(\log f^{-1}_{\;\;\;*} D)$ is given by the pullback of differential forms along $f$. The sheaves $f^{[*]} ( \Sym_\CC^{[m]} \ms A )$ and $\Sym_\CC^{[m]} ( f^{[*]} \ms A )$ agree outside the exceptional locus of $f$. Since they are both reflexive, they are in fact isomorphic by Lemma \ref{lem:ext morphisms}(\ref{itm:extmor2}). So
\[ H^0 \bigl( X, \Sym_\CC^{[m]} \ms A \bigr) = H^0 \bigl( Z, f^{[*]} ( \Sym_\CC^{[m]} \ms A ) \bigr) = H^0 \bigl( Z, \Sym_\CC^{[m]} ( f^{[*]} \ms A ) \bigr). \]
The lemma follows, because $\kappa_\CC(\ms A)$ and $\kappa_\CC(f^{[*]} \ms A)$ are determined by the spaces of global sections appearing on the left- and right-hand side of the above equation, respectively.
\end{proof}

\begin{prp}[Weak Bogomolov--Sommese vanishing on dlt \CC-pairs not of log general type] \label{prp:BS van on dlt C-pairs}
Let $(X, D)$ be a projective dlt \CC-pair of dimension $n$ such that $K_X + D$ is not big. If $\sA \subset \Sym_\CC^{[1]} \Omega_X^p(\log D)$ is a Weil divisorial subsheaf for some $p$, then $\kappa_\CC(\sA) \le n - 1$.
\end{prp}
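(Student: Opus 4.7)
The plan is to split the argument into two cases according to $p$, reducing each case to a previously established result. Since $\Omega_X^p = 0$ for $p > n$, one may assume $p \le n$.

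When $p \le n-1$, I would pass to a $\Q$-factorialization $f \colon Z \to X$, which exists by Theorem \ref{thm:ex Q-fact}. The map $f$ is small with $(Z, f^{-1}_{\;\;\;*}D)$ dlt, so Lemma \ref{lem:kappa Q-fact} gives $(Z, f^{-1}_{\;\;\;*}D)$ the structure of a \CC-pair, provides a natural embedding $f^{[*]}\sA \inj \Sym_\CC^{[1]} \Omega_Z^p(\log f^{-1}_{\;\;\;*}D)$, and ensures $\kappa_\CC(f^{[*]}\sA) = \kappa_\CC(\sA)$. Applying the known Bogomolov--Sommese vanishing on \Q-factorial log canonical \CC-pairs, \cite[Prop.~7.1]{JK11}, on $Z$ then gives $\kappa_\CC(\sA) = \kappa_\CC(f^{[*]}\sA) \le p \le n-1$. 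Note that the non-bigness hypothesis on $K_X + D$ does not enter this case at all; it is only the dlt assumption that is needed, through the existence of the $\Q$-factorialization.

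The case $p = n$ is where the hypothesis on $K_X + D$ is genuinely used. By formula \eqref{eqn:souffl'e}, for every $m \ge 1$ one has $\Sym_\CC^{[m]} \Omega_X^n(\log D) = \O_X(mK_X + \rd mD.)$. The saturated reflexive rank-one subsheaf $\Sym_\CC^{[m]} \sA$ of this line bundle must therefore be of the form $\O_X(mK_X + \rd mD. - F_m)$ for some effective Weil divisor $F_m$. If $\kappa_\CC(\sA) = n$, then by Definition \ref{dfn:kappa_C} some choice of $m$ makes the divisor $mK_X + \rd mD. - F_m$ big in the classical sense. Since $F_m$ and $mD - \rd mD.$ are effective, it would follow that $m(K_X + D)$ is the sum of a big and an effective divisor and is thus big, hence so is $K_X + D$. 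This contradicts the hypothesis.

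No substantial obstacle is anticipated: the proposition is essentially a bookkeeping argument combining the $\Q$-factorialization machinery of Section \ref{sec:dlt C-pairs}, the $\Q$-factorial case of Bogomolov--Sommese vanishing for \CC-pairs \cite[Prop.~7.1]{JK11}, and the top-degree identification \eqref{eqn:souffl'e}. The only point at which any genuine input is required beyond reducing to known theorems is the passage $mK_X + \rd mD. - F_m$ big $\Rightarrow K_X + D$ big in the top-degree case.
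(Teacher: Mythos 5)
Your proof is correct and follows essentially the same route as the paper: pass to a $\Q$-factorialization via Theorem \ref{thm:ex Q-fact} and Lemma \ref{lem:kappa Q-fact}, invoke the known $\Q$-factorial case of Bogomolov--Sommese vanishing for $p \le n-1$, and use the identification \eqref{eqn:souffl'e} to contradict non-bigness of $K_X + D$ when $p = n$. The only (immaterial) differences are that the paper cites \cite[Thm.~7.3]{GKKP11} rather than \cite[Prop.~7.1]{JK11} for the $\Q$-factorial case, and that you spell out the elementary bigness argument in the top-degree case which the paper leaves terse.
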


\begin{proof}
By Theorem \ref{thm:ex Q-fact} and Lemma \ref{lem:kappa Q-fact}, we may pass to a $\Q$-factorialization of $(X, D)$ and assume that $\sA$ is $\Q$-Cartier. If $p \le n - 1$, the claim was proven in \cite[Theorem 7.3]{GKKP11}\footnote{Note that the cited theorem erroneously contains the extra assumption that $\dim X \le 3$. However, the proof works in all dimensions.}. Thus we only need to deal with the case $p = n$.

We argue by contradiction and assume that there is $\sA \subset \Sym_\CC^{[1]} \Omega_X^n(\log D)$ with $\kappa_\CC(\sA) = n$. By Equation (\ref{eqn:souffl'e}), this means that $mK_X + \rd mD.$ is big for some $m$. But then also $K_X + D$ is big, contrary to our assumption.
\end{proof}

\part{PROOF AND SHARPNESS OF MAIN RESULT} \label{part:III}

\section{Proof of Theorem \ref*{thm:lc BS van}} \label{sec:proof of main thm}

\PreprintAndPublication{
\begin{figure}[t]
  \centering
  \[
  \xymatrix{
    *++{\begin{tikzpicture}(4,4)(0,0)
        \fill (4.75,1.8) node[right]{\scriptsize $Z$};
        \draw (0,0) to [out=90, in=180] (2,2); 
        \draw (2,2) to [out=0, in=90] (6,0);
        \draw (6,0) to [out=-90, in=-30] (3,-1);
        \draw (3,-1) to [out=150, in=-90] (0,0);
        \fill (5,.5) node[right]{\scriptsize $E_i$};
        \draw (1,-.3) to [out=20, in=180] (2,-.05); 
        \draw (2,-.05) to [out=0, in=185] (5,-.7);
        \draw (1,1.2) to [out=20, in=180] (2,1.45); 
        \draw (2,1.45) to [out=0, in=185] (5,.8);
        \draw (1,-.3) to [out=90, in=-90] (1,1.2); 
        \draw (5,-.7) to [out=90, in=-90] (5,.8); 
        \fill (3.41,.65) node[right]{\scriptsize $F_i$};
        \draw[semithick] (3.5,-.41) to [out=90, in=-90] (3.5,1.09); 
        \draw[ultra thin] (1.5,-.12) to [out=90, in=-90] (1.5,1.38); 
        \draw[ultra thin] (2,-.05) to [out=90, in=-90] (2,1.45);
        \draw[ultra thin] (2.5,-.09) to [out=90, in=-90] (2.5,1.41);
        \draw[ultra thin] (3,-.22) to [out=90, in=-90] (3,1.28);
        \draw[ultra thin] (4,-.59) to [out=90, in=-90] (4,.91);
        \draw[ultra thin] (4.5,-.7) to [out=90, in=-90] (4.5,.8);
    \end{tikzpicture}
    }
    \ar[d]^-f \\
    *++{\begin{tikzpicture}(4,4)(0,0)
        \fill (5.3,1.1) node[right]{\scriptsize $X$};
        \draw (0,0) to [out=90, in=170] (2,1); 
        \draw (2,1) to [out=-10, in=90] (6,0);
        \draw (6,0) to [out=-90, in=-30] (3,-1);
        \draw (3,-1) to [out=150, in=-90] (0,0);
        \fill (4.8,-.3) node[right]{\scriptsize $f(E_i)$};
        \draw (1,-.1) to [out=20, in=180] (2,.15); 
        \draw (2,.15) to [out=0, in=200] (4.8,-.3);
    \end{tikzpicture}
    }
  }
  \]
  \caption{Setup of the proof of Theorem \ref{thm:lc BS van}} \label{fig:proof main thm}
\end{figure}
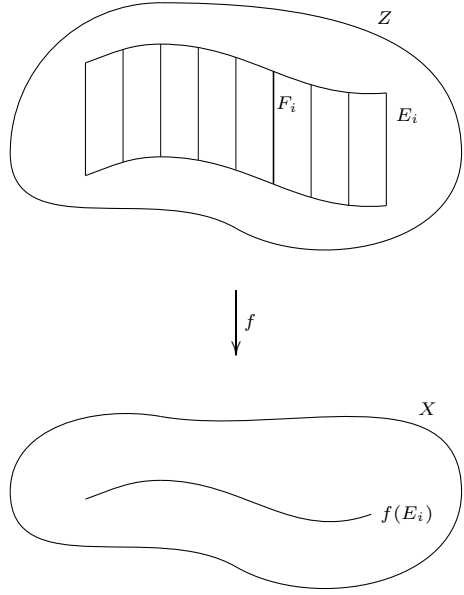
}{}

This section is devoted to the proof of Theorem \ref{thm:lc BS van}. For reasons of clarity, the proof is divided into three separate steps.
\PreprintAndPublication{
Figure \ref{fig:proof main thm} summarizes the notation used.
}{}

\subsection{Step 1: Passing to a minimal dlt model}

Let $f\!: Z \to X$ be a minimal dlt model of $(X, D')$, with $E = E_1 + \cdots + E_\ell$ the divisorial part of $\Exc(f)$ and $D_Z := f^{-1}_{\;\;\;*} D + E$. Clearly $(Z, D_Z)$ is a \CC-pair, and by \cite[Theorem 4.3]{GKKP11}, there exists an embedding
\[ f^{[*]} \ms A \inj \Omega_Z^{[p]}(\log \rd D_Z.) = \Sym_\CC^{[1]} \Omega_Z^p(\log D_Z). \]
Define $\sB$ as the saturation of $f^{[*]} \ms A$ in $\Omega_Z^{[p]}(\log \rd D_Z.)$. By Lemma \ref{lem:saturation reflexive}, $\sB$ is reflexive. Then it is even \Q-Cartier, since $Z$ is \Q-factorial. To prove Theorem \ref{thm:lc BS van}, it is sufficient to show the following claim.
\begin{clm} \label{clm:1060}
We have $\kappa_\CC(\ms B) = \kappa_\CC(\ms A)$.
\end{clm}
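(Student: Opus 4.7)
The plan is to prove the two inequalities $\kappa_\CC(\sB) \le \kappa_\CC(\sA)$ and $\kappa_\CC(\sA) \le \kappa_\CC(\sB)$ separately. The direction $\kappa_\CC(\sB) \le \kappa_\CC(\sA)$ is routine: since all codimension-one components of $\Exc(f)$ are $f$-exceptional, the set $f(E)$ is small in $X$, and $\sA$ is reflexive. Restricting a section of $\Sym_\CC^{[m]} \sB$ to $Z \minus E$, transporting via the isomorphism $f|_{Z \minus E}$ to $X \minus f(E)$, and extending uniquely to $X$ by reflexivity yields an injection $H^0(Z, \Sym_\CC^{[m]} \sB) \hookrightarrow H^0(X, \Sym_\CC^{[m]} \sA)$ for every $m \in \N$, and hence the desired inequality on C-Kodaira--Iitaka dimensions.

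For the reverse inequality, the plan is to show that every $\sigma \in H^0(X, \Sym_\CC^{[k]} \sA)$ pulls back to a regular section of $\Sym_\CC^{[k]} \sB$ on all of $Z$, not merely to a rational section with possible poles along $E$. I would argue by contradiction: suppose $f^* \sigma$ has a pole along $E$, and let $N = \sum n_i E_i$ be the smallest effective divisor for which $f^* \sigma \in H^0\bigl(Z, \Sym_\CC^{[k]} \sB \tensor \O_Z(N) \bigr)$. Using Remark \ref{rem:passing to a power} to replace $\sigma$ by $\sigma^\ell$ if necessary, we may assume the multiplicities $n_i$ are as divisible as needed. The section is equivalent to an embedding $\O_Z(-N) \hookrightarrow \Sym_\CC^{[k]} \sB$, and Proposition \ref{prp:negativity} produces a component $E_0 \subset \supp N$ for which $-N|_{E_0}$ is $f|_{E_0}$-big. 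The minimality of $N$, together with Lemma \ref{lem:saturation-restriction} applied to $\sB$, ensures that the restriction stays injective:
\[ \O_{E_0}(-N|_{E_0}) \hookrightarrow \bigl( \Sym_\CC^{[k]} \sB \bigr) \big|_{E_0}^{**}. \]

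The next step is to descend from $E_0$ to a general fiber $F_0$ of $f|_{E_0}\!: E_0 \to f(E_0)$, on which $-N|_{F_0}$ is big by definition of relative bigness. I would first apply Theorem \ref{thm:res of symm diff} to the dlt \CC-pair $(Z, D_Z)$ along the reduced component $E_0$, with different $E_0^c := \Diff_{E_0}(D_Z - E_0)$. Depending on whether the residue of $\sB$ along $E_0$ vanishes, either the residue or the restriction map produces a Weil divisorial sheaf $\sB_0 \subset \Sym_\CC^{[1]} \Omega_{E_0}^q (\log E_0^c)$ with $q \in \{p-1, p\}$, together with compatible maps $\bigl(\Sym_\CC^{[k]} \sB\bigr)\big|_{E_0}^{**} \to \Sym_\CC^{[k]} \sB_0$. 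Next, Proposition \ref{prp:rel diff C-pairs} applied to $\sB_0$ and the morphism $f|_{E_0}\!: E_0 \to f(E_0)$ yields an integer $0 \le q' \le q$, a Weil divisorial subsheaf $\sA' \subset \Sym_\CC^{[1]} \Omega_{F_0}^{q'}(\log E_0^c|_{F_0})$, and, by the compatibility clause (\ref{prp:rel diff C-pairs}.\ref{itm:cp2}), embeddings $\bigl(\Sym_\CC^{[k']} \sB_0\bigr)\big|_{F_0}^{**} \hookrightarrow \Sym_\CC^{[k']} \sA'$ for every $k'$. Composing these maps and applying the construction to all powers $\sigma^\ell$ simultaneously yields injections $\O_{F_0}(-\ell N|_{F_0}) \hookrightarrow \Sym_\CC^{[k \ell]} \sA'$ for all $\ell \in \N$. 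The bigness of $-N|_{F_0}$ then forces $\kappa_\CC(\sA') \ge \dim F_0$.

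To derive a contradiction, note that Lemma \ref{lem:mindltmodel 3} shows that $(F_0, E_0^c|_{F_0})$ is a dlt \CC-pair with $K_{F_0} + E_0^c|_{F_0}$ $\R$-linearly equivalent to an anti-effective divisor, so it is not of log general type. Proposition \ref{prp:BS van on dlt C-pairs} then gives $\kappa_\CC(\sA') \le \dim F_0 - 1$, contradicting the conclusion of the previous paragraph. The hard part will be the bookkeeping in the third paragraph: I need to verify that the composition of the saturated embedding into $\Omega_Z^{[p]}(\log \rd D_Z.)$, the residue--restriction map of Theorem \ref{thm:res of symm diff}, restriction to the general fiber $F_0$, and the relative-differentials embedding from Proposition \ref{prp:rel diff C-pairs} is honestly injective on our big line bundle, and that the comparison between $\bigl(\Sym_\CC^{[k]} \sB\bigr)\big|_{F_0}^{**}$ and $\Sym_\CC^{[k]} \sA'$ survives the saturations implicit in all the C-symmetric power constructions. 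The remaining steps are either standard reflexive-sheaf manipulations or direct invocations of the preparatory results of Parts~\ref{part:I} and \ref{part:II}.
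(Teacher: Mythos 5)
Your proposal is correct and follows essentially the same route as the paper: the easy inequality via reflexivity of $\Sym_\CC^{[m]} \sA$, and the hard one by contradiction using the minimal pole divisor, the Negativity Lemma \ref{prp:negativity}, restriction to $E_0$ and then to a general fiber $F_0$ via Theorem \ref{thm:res of symm diff} and Proposition \ref{prp:rel diff C-pairs}, and finally weak Bogomolov--Sommese vanishing (Proposition \ref{prp:BS van on dlt C-pairs} together with Lemma \ref{lem:mindltmodel 3}) on $(F_0, E_0^c|_{F_0})$. The bookkeeping you flag at the end is precisely what the paper's Claims \ref{clm:notbig} and \ref{clm:subst} carry out, using Lemma \ref{lem:gen eq} and the compatibility condition (\ref{prp:rel diff C-pairs}.\ref{itm:cp2}).
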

For then $\kappa_\CC(\ms A) \le p$ by Bogomolov--Sommese vanishing for the sheaf $\ms B$ on the \Q-factorial dlt \CC-pair $(Z, D_Z)$, as stated in \cite[Thm.~7.3]{GKKP11}. We will prove Claim \ref{clm:1060} in Step 3, after discussing the properties of $\sB$ in Step 2. But before, we make two observations which simplify notation. First, if $E = 0$, then Claim \ref{clm:1060} directly follows from Lemma \ref{lem:kappa Q-fact}. Second, if $p = 0$, then the assertion of Theorem \ref{thm:lc BS van} itself is trivial, because $\Sym_\CC^{[k]} \Omega_X^0(\log D) = \O_X$ for all $k$. Hence, we will assume the following for the rest of the proof.
\begin{awlog}
The divisor $E$ is nonzero, and $p > 0$.
\end{awlog}

\subsection{Step 2: Properties of the sheaf $\sB$}

Let $1 \le i \le \ell$ be any number. We set up some notation.
\begin{ntn}
Let $F_i$ be a general fiber of $f|_{E_i}$, and set $\ms B_i = \big(\ms B|_{E_i}^{**}\big)\big|_{F_i}^{**}$. Furthermore, denote by $E_i^c$ the different $\Diff_{E_i}(D_Z - E_i)$.
\end{ntn}
By Lemma \ref{lem:mindltmodel 3}, the pair $(F_i, E_i^c|_{F_i})$ is a dlt \CC-pair.

\begin{clm}[Embeddings and non-bigness] \label{clm:notbig}
There is a (canonical) embedding
\begin{sequation} \label{eqn:can emb}
\ms B|_{E_i}^{**} \inj \Sym_\CC^{[1]} \Omega_{E_i}^r(\log E_i^c)
\end{sequation}%
for either $r = p - 1$ or $r = p$. It induces (non-canonical) embeddings
\begin{sequation} \label{eqn:alpha_k}
\alpha_k\!: \bigl( \Sym_\CC^{[k]} (\sB|_{E_i}^{**}) \bigr)\big|_{F_i}^{**} \inj \Sym_\CC^{[k]} \Omega_{F_i}^q(\log E_i^c|_{F_i})
\end{sequation}%
for some number $0 \le q \le r$ and all $k \ge 0$. The $\alpha_k$'s satisfy the compatibility condition (\ref{prp:rel diff C-pairs}.\ref{itm:cp2}).
Since $\sB_i \subset \bigl( \Sym_\CC^{[1]} (\sB|_{E_i}^{**}) \bigr)\big|_{F_i}^{**}$, from $\alpha_1$ we obtain an embedding
\begin{sequation} \label{eqn:alpha_1'}
\alpha_1' = \alpha_1|_{\sB_i}\!: \sB_i \inj \Sym_\CC^{[1]} \Omega_{F_i}^{q}(\log E_i^c|_{F_i}),
\end{sequation}%
with respect to which we have $\kappa_\CC(\ms B_i) \le \dim F_i - 1$.
\end{clm}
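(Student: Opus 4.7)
The plan is to construct the embedding \eqref{eqn:can emb} by means of the residue and restriction maps of Theorem \ref{thm:res of symm diff} for the dlt \CC-pair $(Z, D_Z)$ along $E_i$, to produce the $\alpha_k$ via Proposition \ref{prp:rel diff C-pairs} applied to $f|_{E_i}\!: E_i \to f(E_i)$, and finally to deduce the bound $\kappa_\CC(\sB_i) \le \dim F_i - 1$ from the weak Bogomolov--Sommese vanishing result for dlt \CC-pairs not of log general type, namely Proposition \ref{prp:BS van on dlt C-pairs}.

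For the first embedding, I would apply Theorem \ref{thm:res of symm diff}(\ref{itm:6.12.1}) with $k = 1$ to obtain the residue map
\[
\res_{E_i}^1\!: \Sym_\CC^{[1]} \Omega_Z^p(\log D_Z) \to \Sym_\CC^{[1]} \Omega_{E_i}^{p-1}(\log E_i^c),
\]
compose with the inclusion $\sB \hookrightarrow \Sym_\CC^{[1]} \Omega_Z^p(\log D_Z)$, and pass to the double dual of the restriction to $E_i$ (using Lemma \ref{lem:ext morphisms}) to obtain a map $\sB|_{E_i}^{**} \to \Sym_\CC^{[1]} \Omega_{E_i}^{p-1}(\log E_i^c)$. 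Since the source is torsion-free of rank one, this map is either injective---in which case we set $r = p-1$---or zero. In the zero case, $\sB$ is contained in the kernel of $\res_{E_i}^1$; since this kernel agrees with $\Sym_\CC^{[1]} \Omega_Z^p(\log D_Z - E_i)$ on the big open snc locus of $(Z, \rp D_Z.)$ and both sheaves are reflexive, one upgrades the inclusion to a global inclusion $\sB \subset \Sym_\CC^{[1]} \Omega_Z^p(\log D_Z - E_i)$. Composing with the restriction map $\restr_{E_i}^1$ of Theorem \ref{thm:res of symm diff}(\ref{itm:6.12.2}) then produces, by the same procedure, a map $\sB|_{E_i}^{**} \to \Sym_\CC^{[1]} \Omega_{E_i}^p(\log E_i^c)$. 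This map cannot be zero, for otherwise the analogous reflexive-extension argument would force $\sB \subset \Sym_\CC^{[1]} \Omega_Z^p(\log D_Z)(-E_i)$, contradicting the saturatedness of $\sB$ in $\Sym_\CC^{[1]} \Omega_Z^p(\log D_Z)$ (since the quotient would acquire torsion along $E_i$). We therefore obtain the desired injection with $r = p$.

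Once \eqref{eqn:can emb} is in hand, Proposition \ref{prp:rel diff C-pairs} applied to the dlt \CC-pair $(E_i, E_i^c)$, to the morphism $f|_{E_i}\!: E_i \to f(E_i)$ (which is surjective onto its image, with general fibre $F_i$), and to the Weil divisorial subsheaf $\sB|_{E_i}^{**} \subset \Sym_\CC^{[1]} \Omega_{E_i}^r(\log E_i^c)$ directly yields the embeddings $\alpha_k$ of \eqref{eqn:alpha_k} for some $0 \le q \le r$, together with the compatibility (\ref{prp:rel diff C-pairs}.\ref{itm:cp2}). The embedding $\alpha_1'$ of \eqref{eqn:alpha_1'} is obtained by restricting $\alpha_1$ to the subsheaf $\sB_i \subset (\Sym_\CC^{[1]} (\sB|_{E_i}^{**}))|_{F_i}^{**}$.

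For the non-bigness statement, Lemma \ref{lem:mindltmodel 3} ensures that $(F_i, E_i^c|_{F_i})$ is a dlt \CC-pair and that $K_{F_i} + E_i^c|_{F_i}$ is $\R$-linearly equivalent to an anti-effective divisor, so in particular not big. Applying Proposition \ref{prp:BS van on dlt C-pairs} to this pair and to the Weil divisorial subsheaf $\sB_i \subset \Sym_\CC^{[1]} \Omega_{F_i}^q(\log E_i^c|_{F_i})$ supplied by $\alpha_1'$ then yields $\kappa_\CC(\sB_i) \le \dim F_i - 1$, which is the required bound. The main obstacle I anticipate lies in the dichotomy used to construct \eqref{eqn:can emb}: the residue and restriction sequences of Theorem \ref{thm:res of symm diff} are only known to be exact on the snc locus of $(Z, \rp D_Z.)$, and upgrading a generic inclusion to a global one---and then exploiting the saturatedness of $\sB$ to rule out the degenerate case---relies on a careful use of reflexivity and of the codimension-one structure of a dlt model.
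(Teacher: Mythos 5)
Your overall architecture matches the paper's for the second and third parts of the claim: the maps $\alpha_k$ are obtained exactly as you say by applying Proposition \ref{prp:rel diff C-pairs} to the subsheaf \eqref{eqn:can emb} and to $f|_{E_i}$, and the bound $\kappa_\CC(\sB_i)\le\dim F_i-1$ follows from Lemma \ref{lem:mindltmodel 3} together with Proposition \ref{prp:BS van on dlt C-pairs}, as in the paper. The gap is in your construction of \eqref{eqn:can emb}. You run the residue/kernel dichotomy on $Z$ and, in the degenerate case, need to upgrade the containment of $\sB$ in $\ker(\res^1_{E_i})$ to a containment in $\Sym_\CC^{[1]}\Omega_Z^p(\log D_Z - E_i)$. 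Your justification --- that the two sheaves agree on ``the big open snc locus of $(Z,\rp D_Z.)$'' --- fails for two reasons. First, the snc locus of a dlt pair is dense but in general \emph{not} big: the non-snc locus can have codimension two (the vertex of the quadric cone in Example \ref{exm:491} is the model case), and such codimension-two strata can meet $E_i$ in a divisor of $E_i$, which is exactly where the pole bookkeeping matters. Second, $\Sym_\CC^{[1]}\Omega_Z^p(\log D_Z-E_i)$ is not saturated in $\Sym_\CC^{[1]}\Omega_Z^p(\log D_Z)$ --- the quotient is torsion supported on the divisor $E_i$ --- so Lemma \ref{lem:gen eq} cannot promote a generic inclusion to a global one; and Theorem \ref{thm:res of symm diff} only gives you the residue map, with no control over its kernel outside the snc locus. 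The same two problems undermine your saturatedness argument ruling out the zero case for the restriction map.

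The paper avoids all of this by reversing the order of operations: it first restricts the \emph{saturated} inclusion $\sB\subset\Omega_Z^{[p]}(\log\rd D_Z.)$ to $E_i$, where Lemma \ref{lem:saturation-restriction} guarantees that $\sB|_{E_i}^{**}$ still injects into $\Omega_Z^{[p]}(\log\rd D_Z.)|_{E_i}^{**}$, and then invokes the three-term residue \emph{sequence} of \cite[Thm.~11.7.2]{GKKP11}, which is exact off a small subset \emph{of $E_i$} --- precisely the codimension-two structure theory your approach would force you to redo. The Reduction Lemma \ref{lem:reduction} applied to that sequence then places the rank-one sheaf $\sB|_{E_i}^{**}$ into either the subsheaf $\Omega_{E_i}^{[p]}(\log\rd E_i^c.)$ or the quotient $\Omega_{E_i}^{[p-1]}(\log\rd E_i^c.)$, giving \eqref{eqn:can emb} with $r=p$ or $r=p-1$ without ever identifying a kernel on $Z$. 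To repair your argument you would either have to adopt this order, or prove an exactness statement for your residue map off a small subset, which Theorem \ref{thm:res of symm diff} does not provide.
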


\begin{proof}
We start by considering the inclusion $\sB \subset \Omega_Z^{[p]}(\log \rd D_Z.)$. First, we restrict to $E_i$. By \cite[Theorem 11.7.2]{GKKP11} and by Proposition \ref{prp:dlt adjunction}(\ref{itm:6.8.2}), we have the residue sequence
\[ 0 \to \Omega_{E_i}^{[p]}(\log \rd E_i^c.) \to \Omega_Z^{[p]}(\log \rd D_Z.)\big|_{E_i}^{**} \to \Omega_{E_i}^{[p-1]}(\log \rd E_i^c.) \to 0, \]
which is exact off a small subset of $E_i$. By Lemma \ref{lem:saturation-restriction}, the sheaf $\ms B|_{E_i}^{**}$ injects into the middle term of this sequence. Applying the Reduction Lemma \ref{lem:reduction} and using that $\Omega_{E_i}^{[r]}(\log \rd E_i^c.) = \Sym_\CC^{[1]} \Omega_{E_i}^r(\log E_i^c)$, we obtain \eqref{eqn:can emb}.

Next, we restrict to $F_i$. Applying Proposition \ref{prp:rel diff C-pairs} to the subsheaf in \eqref{eqn:can emb}, we obtain the embeddings $\alpha_k$ in \eqref{eqn:alpha_k}.

Finally, we prove the statement about the \CC-Kodaira dimension of $\sB_i$. By Lemma \ref{lem:mindltmodel 3}, the pair $(F_i, E_i^c|_{F_i})$ satisfies the assumptions of weak Bogomolov--Sommese vanishing, Proposition \ref{prp:BS van on dlt C-pairs}. An application of the latter to the embedding \eqref{eqn:alpha_1'} yields $\kappa_\CC(\ms B_i) \le \dim F_i - 1$, thus finishing the proof of Claim \ref{clm:notbig}.
\end{proof}

Recall from the discussion in Section~\ref{subsec:outline of prf} that for Weil divisorial sheaves, reflexive pull-back does not commute with taking reflexive tensor powers. The following claim tells us that in our situation, where pull-back is restriction to $F_i$ and tensor powers are replaced by $\Sym_\CC^{[k]} (\,\cdot\,)$, we have at least an inclusion one way.

\begin{clm}[Remedy for non-commutativity] \label{clm:subst}
For any number $k$, there is an injection
\begin{sequation} \label{seq:subst}
\bigl( (\Sym_\CC^{[k]} \sB)|_{E_i}^{**} \bigr)\big|_{F_i}^{**} \inj \Sym_\CC^{[k]} \sB_i,
\end{sequation}%
where $\Sym_\CC^{[k]} \sB_i$ is defined with respect to the embedding \eqref{eqn:alpha_1'}.
\end{clm}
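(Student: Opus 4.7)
The strategy is to factor the desired map through the intermediate sheaf $\Sym_\CC^{[k]}(\sB|_{E_i}^{**})$ on $E_i$. First I would lift the embedding \eqref{eqn:can emb} from the $k=1$ level to an injection
\[
  (\Sym_\CC^{[k]} \sB)\big|_{E_i}^{**} \inj \Sym_\CC^{[k]}(\sB|_{E_i}^{**})
\]
on $E_i$, using the residue or restriction map supplied by Theorem~\ref{thm:res of symm diff}. Next, the embedding $\alpha_k$ produced by Proposition~\ref{prp:rel diff C-pairs} (applied to the dlt \CC-pair $(E_i, E_i^c)$, the morphism $f|_{E_i}$ with general fibre $F_i$, and the Weil divisorial sheaf $\sB|_{E_i}^{**}$) will be shown to land inside $\Sym_\CC^{[k]} \sB_i$, so it restricts to give an injection
\[
  (\Sym_\CC^{[k]}(\sB|_{E_i}^{**}))\big|_{F_i}^{**} \inj \Sym_\CC^{[k]} \sB_i.
\]
Restricting the first injection to $F_i$ and composing with the second will produce \eqref{seq:subst}; injectivity of the composition follows from a generic check on $F^\circ$, where the whole chain is the identity on the line bundle $\sB|_{F^\circ}^{\otimes k}$, together with the torsion-freeness of $\Sym_\CC^{[k]} \sB_i$.

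\textbf{Details of the two steps.} Two cases arise in the first step, according to whether $r=p$ or $r=p-1$ in Claim~\ref{clm:notbig}. When $r=p$ the residue of $\sB$ along $E_i$ vanishes, so generically on the snc locus of $(Z,\rp D_Z.)$ we have $\sB \subset \Sym_\CC^{[1]} \Omega_Z^p(\log D_Z - E_i)$; Lemma~\ref{lem:ext morphisms} extends this inclusion globally, and hence $\Sym_\CC^{[k]} \sB \subset \Sym_\CC^{[k]} \Omega_Z^p(\log D_Z - E_i)$, after which Theorem~\ref{thm:res of symm diff}(\ref{itm:6.12.2}) supplies $\restr_{E_i}^k$. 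When $r=p-1$ one applies $\res_{E_i}^k$ from Theorem~\ref{thm:res of symm diff}(\ref{itm:6.12.1}) directly. In either case the resulting map factors through $(\Sym_\CC^{[k]} \sB)|_{E_i}^{**}$ and agrees generically on $E_i$ with the $k$-th symmetric power of \eqref{eqn:can emb}, so Lemma~\ref{lem:gen eq} forces its image into the saturated subsheaf $\Sym_\CC^{[k]}(\sB|_{E_i}^{**}) \subset \Sym_\CC^{[k]} \Omega_{E_i}^r(\log E_i^c)$; generic injectivity plus torsion-freeness yields an honest injection. For the second step, the compatibility (\ref{prp:rel diff C-pairs}.\ref{itm:cp2}) identifies $\alpha_k|_{F^\circ}$ with $\Sym^k \alpha_1'|_{F^\circ}$, so the image of $\alpha_k$ agrees generically with $\Sym_\CC^{[k]} \sB_i$, and a second application of Lemma~\ref{lem:gen eq} confines it to $\Sym_\CC^{[k]} \sB_i$.

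\textbf{Main obstacle.} The chief technical difficulty is that the three operations $\Sym_\CC^{[k]}(\cdot)$, $(\cdot)|_{E_i}^{**}$, and $(\cdot)|_{F_i}^{**}$ do not commute, so each natural map produced by Theorem~\ref{thm:res of symm diff} or Proposition~\ref{prp:rel diff C-pairs} must be compared with the saturation defining $\Sym_\CC^{[k]} \sB_i$ rather than with the smaller reflexive power. This comparison succeeds only because all sheaves under consideration collapse to the genuine line bundle $\sB|_{F^\circ}^{\otimes k}$ on $F^\circ$, which lets Lemma~\ref{lem:gen eq} extend the equality across small subsets, and because the target $\Sym_\CC^{[k]} \sB_i$ is saturated in $\Sym_\CC^{[k]} \Omega_{F_i}^q(\log E_i^c|_{F_i})$ by definition of the orbifold symmetric power, which is exactly the hypothesis that Lemma~\ref{lem:gen eq} demands. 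The final injectivity is obtained by verifying it on $F^\circ$ and promoting it globally via the torsion-freeness of $\Sym_\CC^{[k]} \sB_i$.
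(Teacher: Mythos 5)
Your proposal is correct and follows essentially the same route as the paper's proof: apply Theorem \ref{thm:res of symm diff} to get the map $(\Sym_\CC^{[k]} \sB)|_{E_i}^{**} \to \Sym_\CC^{[k]} \Omega_{E_i}^r(\log E_i^c)$, use Lemma \ref{lem:gen eq} to land in $\Sym_\CC^{[k]}(\sB|_{E_i}^{**})$, restrict to $F_i$, and then use $\alpha_k$ together with the compatibility (\ref{prp:rel diff C-pairs}.\ref{itm:cp2}) and a second application of Lemma \ref{lem:gen eq} to land in $\Sym_\CC^{[k]} \sB_i$. Your explicit case analysis for $r=p$ versus $r=p-1$ in the first step merely unpacks what the paper leaves implicit when it invokes Theorem \ref{thm:res of symm diff}(\ref{itm:6.12.1}) or (\ref{itm:6.12.2}).
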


\begin{proof}
Let $k$ be any number. By Theorem \ref{thm:res of symm diff}, there is a map
\begin{sequation} \label{eqn:1421}
(\Sym_\CC^{[k]} \sB)\big|_{E_i}^{**} \to \Sym_\CC^{[k]} \Omega_{E_i}^r(\log E_i^c)
\end{sequation}%
for the same $r$ as in \eqref{eqn:can emb}. We use Theorem \ref{thm:res of symm diff}(\ref{itm:6.12.1}) or Theorem \ref{thm:res of symm diff}(\ref{itm:6.12.2}), according to whether $r = p - 1$ or $r = p$.
As explained in Theorem \ref{thm:res of symm diff}, the map \eqref{eqn:1421} generically coincides with the $k$-th power of the embedding \eqref{eqn:can emb}, hence it is injective too.
By Lemma \ref{lem:gen eq} there is an inclusion
\begin{sequation} \label{eqn:817}
(\Sym_\CC^{[k]} \sB)\big|_{E_i}^{**} \subset \Sym_\CC^{[k]} (\sB|_{E_i}^{**}),
\end{sequation}%
because the sheaves generically agree as subsheaves of $\Sym_\CC^{[k]} \Omega_{E_i}^r(\log E_i^c)$, and the latter is saturated. Restricting this inclusion to $F_i$, we obtain
\[ \bigl( (\Sym_\CC^{[k]} \sB)|_{E_i}^{**} \bigr)\big|_{F_i}^{**} \subset \bigl( \Sym_\CC^{[k]} (\sB|_{E_i}^{**}) \bigr)\big|_{F_i}^{**}. \]

The embedding \eqref{eqn:alpha_1'} induces a saturated subsheaf
\[ \Sym_\CC^{[k]} \sB_i \subset \Sym_\CC^{[k]} \Omega_{F_i}^q(\log E_i^c|_{F_i}). \]
In order to prove Claim \ref{clm:subst}, it suffices to show that the image of $\alpha_k$ is contained in this subsheaf, because then the injection \eqref{seq:subst} is given as
\[ \bigl( (\Sym_\CC^{[k]} \sB)|_{E_i}^{**} \bigr)\big|_{F_i}^{**} \subset \bigl( \Sym_\CC^{[k]} (\sB|_{E_i}^{**}) \bigr)\big|_{F_i}^{**} \stackrel{\alpha_k}{\lhook\joinrel\longrightarrow} \Sym_\CC^{[k]} \sB_i. \]
But since the maps $\alpha_k$ satisfy the compatibility condition (\ref{prp:rel diff C-pairs}.\ref{itm:cp2}) and $\sB_i$ is the image of $\alpha_1'$, Lemma \ref{lem:gen eq} applies to show that $\img \alpha_k$ is contained in $\Sym_\CC^{[k]} \sB_i$.
\end{proof}

\subsection{Step 3: Pulling back sections}

Recall that we need to show Claim \ref{clm:1060}. We start with an observation.

\begin{obs}
Fix any number $k > 0$. Under the isomorphism induced by $f$, we have $\bigl(\Sym_\CC^{[k]} \sA\bigr)\big|_{X \minus f(\Exc f)} \isom \bigl(\Sym_\CC^{[k]} \sB\bigr)\big|_{Z \minus \Exc(f)}$. Hence there is an injective ``push-forward'' map
\begin{sequation} \label{eqn:6.3}
f_*\!: H^0(Z, \Sym_\CC^{[k]} \sB) \to H^0(X, \Sym_\CC^{[k]} \sA),
\end{sequation}%
given by the composition
\begin{align*}
H^0(Z, \Sym_\CC^{[k]} \sB) &\xrightarrow{\;\restr\;}                       \Hnought(Z \minus \Exc(f), \Sym_\CC^{[k]} \sB) \\
                         &\xrightarrow{\hspace{.724em}\sim\hspace{.724em}} \Hnought(X \minus f(\Exc f), \Sym_\CC^{[k]} \sA) \\
                         &\xrightarrow{\hspace{.724em}\sim\hspace{.724em}} H^0(X, \Sym_\CC^{[k]} \sA).
\end{align*}
For the last map, we used that $f(\Exc f)$ has codimension $\ge 2$ in $X$ and $\Sym_\CC^{[k]} \sA$ is reflexive.
\end{obs}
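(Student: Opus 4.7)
The observation has two parts: first, the canonical isomorphism of the two sheaves on the isomorphism locus of $f$; second, the well-definedness and injectivity of the push-forward map $f_*$, which will be a formal consequence of the first part.

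To establish the isomorphism, I would set $U^Z := Z \minus \Exc(f)$ and $U^X := X \minus f(\Exc f)$, and note that $U^X$ is big, since $f$ is a proper birational morphism between normal varieties and hence $f(\Exc f)$ has codimension at least two in $X$. The restriction $f|_{U^Z}\!: U^Z \to U^X$ is an isomorphism, and under it $D_Z|_{U^Z}$ corresponds to $D|_{U^X}$, because the additional boundary components $E_1, \dots, E_\ell$ of $D_Z = f^{-1}_{\;\;\;*} D + E$ are supported in $\Exc(f)$. Pullback of K\"ahler differentials therefore induces an isomorphism
\[
\Sym_\CC^{[k]}\Omega_Z^p(\log D_Z)\big|_{U^Z} \isom \Sym_\CC^{[k]}\Omega_X^p(\log D)\big|_{U^X}
\]
carrying $f^{[*]}\sA|_{U^Z}$ to $\sA|_{U^X}$. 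Since $\sB$ is defined as the saturation of $f^{[*]}\sA$ in $\Sym_\CC^{[1]}\Omega_Z^p(\log D_Z)$, the restriction $\sB|_{U^Z}$ then corresponds to the saturation $\tilde\sA$ of $\sA|_{U^X}$ in $\Sym_\CC^{[1]}\Omega_X^p(\log D)|_{U^X}$.

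The crux is to verify that $\Sym_\CC^{[k]}\sA|_{U^X}$ and $\Sym_\CC^{[k]}\sB|_{U^Z}$ agree under this identification. By Definition \ref{dfn:kappa_C}, they are respectively the saturations of $\sA^{[k]}|_{U^X}$ and of $\tilde\sA^{[k]}$ in $\Sym_\CC^{[k]}\Omega_X^p(\log D)|_{U^X}$. The inclusion $\sA^{[k]}|_{U^X} \hookrightarrow \tilde\sA^{[k]}$ is a map of rank-one reflexive sheaves whose cokernel is rank-zero, i.e.~torsion; and two subsheaves of a torsion-free sheaf whose difference is torsion share the same saturation. Hence the two saturations coincide, proving the isomorphism claim.

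For the injectivity of $f_*$, I would check each of the three factors of the composition separately. The restriction $H^0(Z, \Sym_\CC^{[k]}\sB) \to \mathit\Gamma(U^Z, \Sym_\CC^{[k]}\sB)$ is injective because $\Sym_\CC^{[k]}\sB$ is saturated in the reflexive sheaf $\Sym_\CC^{[k]}\Omega_Z^p(\log D_Z)$, hence itself reflexive (Lemma \ref{lem:saturation reflexive}) and in particular torsion-free (Lemma \ref{lem:char reflexive}). The middle map is the isomorphism established above. The last map $\mathit\Gamma(U^X, \Sym_\CC^{[k]}\sA) \to H^0(X, \Sym_\CC^{[k]}\sA)$ is an isomorphism because $\Sym_\CC^{[k]}\sA$ is reflexive and therefore normal, and $X \minus U^X$ is small. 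The composition is therefore injective. The only genuine technical point is the equality of saturations in the middle paragraph; the rest is routine bookkeeping with the material of Section \ref{sec:refl sheaves}.
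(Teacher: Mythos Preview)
Your argument is correct. In fact, the paper treats this observation as essentially self-evident: the ``proof'' in the paper consists only of writing out the three-step composition and remarking that the last map is an isomorphism by reflexivity and smallness of $f(\Exc f)$. The paper does not spell out why the sheaves $\Sym_\CC^{[k]}\sA$ and $\Sym_\CC^{[k]}\sB$ agree over the isomorphism locus.

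You go further by identifying and resolving the one point that is not entirely formal: since $\sB$ is defined as the \emph{saturation} of $f^{[*]}\sA$, on $U^X$ it corresponds not to $\sA$ itself but to its saturation $\tilde\sA$, and one must check that passing from $\sA$ to $\tilde\sA$ does not change the $\CC$-symmetric power. Your argument for this---that $\sA^{[k]}$ and $\tilde\sA^{[k]}$ are rank-one subsheaves differing by torsion, hence have the same saturation in $\Sym_\CC^{[k]}\Omega_X^p(\log D)$---is correct and is exactly the kind of reasoning encapsulated in Lemma~\ref{lem:gen eq}. So your treatment is a strict elaboration of the paper's, filling in a detail the paper leaves implicit.
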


To prove Claim \ref{clm:1060}, it suffices to show the surjectivity of $f_*$, for all values of $k$.
To this end, let $\sigma \in H^0(X, \Sym_\CC^{[k]} \sA)$ be any section.
We may regard $f^*(\sigma) \in H^0\bigl(Z, f^{[*]}(\Sym_\CC^{[k]} \sA)\bigr)$ as a rational section of $\Sym_\CC^{[k]} \sB$, possibly with poles along~$E$.
\begin{clm} \label{clm:no poles}
In fact, there are no such poles, i.e.~$f^*(\sigma) \in H^0(Z, \Sym_\CC^{[k]} \sB)$.
\end{clm}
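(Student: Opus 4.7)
The plan is to argue by contradiction, directly generalizing the simple-case argument of Section \ref{subsubsec:simple proof}. Suppose $f^{*}\sigma$, viewed as a rational section of $\Sym_\CC^{[k]} \sB$, has nontrivial effective polar divisor $P = \sum_{i} a_i E_i$ along the $f$-exceptional set, chosen minimally in the sense that $f^{*}\sigma$ lies in $H^0\bigl(Z, ((\Sym_\CC^{[k]} \sB) \otimes \O_Z(P))^{**}\bigr)$ but in no smaller such reflexive hull. Equivalently, $f^{*}\sigma$ defines an injection $\O_Z(-P) \inj \Sym_\CC^{[k]} \sB$ of Weil divisorial sheaves which, by minimality, does not vanish identically along any component $E_i \subset \supp P$.

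First I apply the Negativity lemma for bigness (Proposition \ref{prp:negativity intro}) to the nonzero effective $f$-exceptional $\Q$-Cartier divisor $P$ (which is $\Q$-Cartier because $Z$ is $\Q$-factorial). This produces a component $E_{i_0} \subset \supp P$ along which $-P|_{E_{i_0}}$ is $f|_{E_{i_0}}$-big. I then choose a sufficiently general fiber $F_{i_0}$ of $f|_{E_{i_0}}$ so that $-P|_{F_{i_0}}$ is big on $F_{i_0}$ and so that $F_{i_0}$ satisfies the good properties used in Claims \ref{clm:notbig} and \ref{clm:subst}.

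Next I iterate the argument over powers of the section. For each $m \ge 1$, the section $\sigma^m \in H^0(X, \Sym_\CC^{[km]} \sA)$ satisfies $f^{*}(\sigma^m) = (f^{*}\sigma)^m$ by Remark \ref{rem:passing to a power}, and its polar divisor is exactly $mP$. This gives an injection $\O_Z(-mP) \inj \Sym_\CC^{[km]} \sB$. Restricting to $F_{i_0}$, the minimality of $P$ guarantees that at a general point of $F_{i_0}$ — where $E_{i_0}$ is Cartier and both sheaves are locally free — the section $f^{*}\sigma \cdot \ell^{a_{i_0}}$ (with $\ell$ a local equation of $E_{i_0}$) does not vanish identically along $E_{i_0}$, so its restriction to $F_{i_0}$ remains nonzero, and injectivity is preserved since the source is a Weil divisorial sheaf and the target is torsion-free. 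Composing with the injection from Claim \ref{clm:subst} yields
\[ \O_{F_{i_0}}(-mP|_{F_{i_0}}) \inj \Sym_\CC^{[km]} \sB_{i_0}. \]

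Finally I take global sections. Since $-P|_{F_{i_0}}$ is a big integral Weil divisor on the normal variety $F_{i_0}$, the quantity $h^0\bigl(F_{i_0}, \O_{F_{i_0}}(-mP|_{F_{i_0}})\bigr)$ grows like $m^{\dim F_{i_0}}$, which via the above injections forces $\kappa_\CC(\sB_{i_0}) \ge \dim F_{i_0}$. This contradicts the bound $\kappa_\CC(\sB_{i_0}) \le \dim F_{i_0} - 1$ established in Claim \ref{clm:notbig}. The main obstacle I expect is the careful local bookkeeping needed to verify that the restriction to $F_{i_0}$ remains injective — this is where the minimality of $P$ is essential, and where the choice of $F_{i_0}$ as a sufficiently general fiber (avoiding the singular loci of $Z$, $E_{i_0}$, and the relevant sheaves) must be exploited.
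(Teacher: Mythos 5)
Your proposal is correct and follows essentially the same route as the paper's proof: contradiction via the minimal polar divisor, the Negativity lemma (Proposition \ref{prp:negativity}) to select a component $E_{i_0}$ and a general fiber $F_{i_0}$ on which the negative of the polar divisor is big, restriction to $E_{i_0}$ and then $F_{i_0}$ using minimality to preserve injectivity, and the combination of Claims \ref{clm:subst} and \ref{clm:notbig} for the contradiction. The only difference is the final step: rather than counting sections of $\O_{F_{i_0}}(-mP|_{F_{i_0}})$ asymptotically in $m$ --- which tacitly invokes an Iitaka-type upper bound relating section growth to $\kappa_\CC$ that is not immediate from Definition \ref{dfn:kappa_C} --- the paper passes to a single suitable power of $\sigma$ via Remark \ref{rem:passing to a power} so that the polar divisor $G$ becomes Cartier, whence $\O_{F_{i_0}}(-G|_{F_{i_0}})$ is a big line bundle and the contradiction with Claim \ref{clm:notbig} follows from monotonicity of $\kappa$ under inclusions of rank-one sheaves alone.
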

Assuming Claim \ref{clm:no poles}, $f^*(\sigma)$ is an $f_*$-preimage of $\sigma$.~--- To prove Claim \ref{clm:no poles}, let $G$ be the pole divisor of $f^*(\sigma)$ as a rational section of $\Sym_\CC^{[k]} \sB$, i.e.~the minimal effective divisor on $Z$ such that
\[ f^*(\sigma) \in H^0 \bigl( Z, \bigl(\Sym_\CC^{[k]} \sB \tensor \mc O_Z(G)\bigr)^{**} \bigr). \]
Clearly $G$ is supported on $E$, and we aim to show that $G = 0$.

We proceed by contradiction. Suppose that $G \ne 0$. Then by the Negativity Lemma \ref{prp:negativity}, there is an index $1 \le i \le \ell$ such that $-G|_{F_i}$ is big. By Remark \ref{rem:passing to a power}, passing to a suitable power of $\sigma$ we may assume that $G$ is Cartier. Then $\O_{F_i}(-G|_{F_i})$ is a big line bundle. Note that $f^*(\sigma)$, as a section of
\[ \Sym_\CC^{[k]} \sB \tensor \mc O_Z(G) \isom \sHom \bigl( \O_Z(-G), \Sym_\CC^{[k]} \sB \bigr), \]
does not vanish along $F_i$, by the minimality of $G$. Restricting to $E_i$ and then to $F_i$, we get a morphism
\[ \O_{F_i}(-G|_{F_i}) = \bigl( \O_Z(-G)|_{E_i} \bigr)\big|_{F_i} \to \bigl( (\Sym_\CC^{[k]} \sB)|_{E_i}^{**} \bigr)\big|_{F_i}^{**} \]
which is nonzero, hence injective. So $\bigl( (\Sym_\CC^{[k]} \sB)|_{E_i}^{**} \bigr)\big|_{F_i}^{**}$ is a big Weil divisorial sheaf. Now by Claim \ref{clm:subst}, this shows that $\Sym_\CC^{[k]} \ms B_i$ is big too. But this contradicts the non-bigness Claim \ref{clm:notbig}. We conclude that $G = 0$, which proves Claim \ref{clm:no poles}. \qed

\section{Sharpness of Theorem \ref*{thm:lc BS van}} \label{sec:sharpness}

\PreprintAndPublication{
In this section, we show that the assumptions of our main result, Theorem \ref{thm:lc BS van}, cannot be weakened. To be more precise, in Example \ref{exm:DB no BS van} we will exhibit normal projective \Q-Gorenstein threefolds $X$ that have \emph{Du Bois} singularities (but are not log canonical), such that $\Omega_X^{[2]}$ admits a \Q-ample \Q-Cartier subsheaf.

For an introduction to Du Bois singularities, see \cite{KS09}. Note that for $X$ to be Du Bois, it is not required that $K_X$ be \Q-Cartier. It is a basic fact that the underlying space of a log canonical pair has Du Bois singularities, cf.~\cite[Thm.~1.4]{KK10}. So Du Bois singularities can be seen as a generalization of log canonicity to the non-\Q-Gorenstein case. However, a \Q-Gorenstein Du Bois singularity is not necessarily log canonical.

Since the proof of the Extension Theorem \cite[Thm.~16.1]{GKKP11} heavily relies on vanishing theorems for spaces with Du Bois singularities, it is natural to ask whether Theorem~\ref{thm:lc BS van} also carries over to this more general setting. The following example shows that this is not the case.
Philosophically speaking, this tells us that Theorem~\ref{thm:lc BS van} is not a consequence of the closedness of holomorphic forms alone.

\begin{exm} \label{exm:DB no BS van}
Let $S$ be a smooth projective surface such that $K_S$ is ample, but the Hodge numbers $h^{0,1}(S)$ and $h^{0,2}(S)$ are zero. For example, let $S$ be any Catanese surface \cite{Cat81}. Choose a number $n \gg 0$ such that $nK_S$ is very ample and defines a projectively normal embedding $S \inj \P^N$. Let $X \subset \P^{N+1}$ be the projective cone over $S$ with respect to this embedding. Then $X$ is normal, because the embedding of $S$ is projectively normal. Moreover, using \cite[Ex.~3.5]{HK10} it can be checked that $K_X$ is \Q-Cartier.

Next, we show that $X$ has Du Bois singularities. For this, let $f\!: \tilde X \to X$ be the blowup of the vertex $P \in X$. The map $f$ is a resolution of $X$, and the sheaves $R^i f_* \O_{\tilde X}$, $i > 0$, are skyscraper sheaves supported at $P$. A Leray spectral sequence computation shows that the stalk $(R^i f_* \O_{\tilde X})_P$ is equal to
\[ (R^i f_* \O_{\tilde X})_P = \bigoplus_{m \ge 0} H^i(S, mnK_S) \quad \text{for all $i > 0$}. \]
By Kodaira vanishing and the assumption on the Hodge numbers of $S$, all the summands vanish, hence the natural map $\O_X \to Rf_* \O_{\tilde X}$ is a quasi-isomorphism, and $X$ has rational singularities. In particular, $X$ is Du Bois \cite[Cor.~2.6]{Kov99}. On the other hand, an elementary discrepancy calculation shows that $(X, \emptyset)$ is not log canonical.

In order to see that $X$ fails Bogomolov--Sommese vanishing, let $\iota\!: X \minus \{ P \} \inj X$ be the inclusion of the smooth locus, and let $p\!: X \minus \{ P \} \to S$ be the natural projection map. Then
\[ \sA := \iota_*(p^* \omega_S) \subset \Omega_X^{[2]} \]
is a \Q-ample \Q-Cartier subsheaf, because its $n$-th reflexive power $\sA^{[n]}$ is exactly the hyperplane bundle associated to the given embedding $X \subset \P^{N+1}$. This ends the construction of our counterexample.
\end{exm}

In Example \ref{exm:DB no BS van}, it is important that the sheaf $\sA$ is not invertible: A close examination of the proof of \cite[Thm.~16.1]{GKKP11} yields the following statement, which we state without proof for completeness' sake.

\begin{thm}[Bogomolov--Sommese vanishing for invertible sheaves on Du Bois spaces] \label{thm:DB BS van Cartier}
Let $X$ be a normal projective variety with Du Bois singularities. If $\sA \subset \Omega_X^{[p]}$ is an invertible subsheaf, then $\kappa(\ms A) \le p$. \qed
\end{thm}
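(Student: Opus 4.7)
The plan is to reduce the claim to the classical Bogomolov--Sommese Theorem \ref{thm:BS van} via a log resolution. Let $\pi\colon \tilde X \to X$ be a log resolution of $X$ with reduced exceptional divisor $E$ of snc support. Because $\sA$ is invertible, its pullback $\pi^*\sA$ is a line bundle, and pullback of line bundles commutes with tensor powers, giving $(\pi^*\sA)^n = \pi^*(\sA^{[n]})$ for every $n \ge 1$ and hence $\kappa(\pi^*\sA) = \kappa(\sA)$. It thus suffices to exhibit an embedding
\[ \pi^*\sA \inj \Omega_{\tilde X}^p(\log E), \]
since Theorem \ref{thm:BS van} applied to the snc pair $(\tilde X, E)$ then yields $\kappa(\pi^*\sA) \le p$, which is what we want.

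Over $\pi^{-1}(X_{\sm})$ the embedding is obtained by pulling back the tautological inclusion $\sA \inj \Omega_X^p$; the entire content is to control the behaviour of sections along $E$. Here I would follow the strategy of \cite[Thm.~16.1]{GKKP11}: a nonvanishing section of some sufficiently divisible power $\sA^n$ defines a cyclic cover $\gamma\colon Y \to X$ on which the section becomes a genuine logarithmic differential form. Passing to a resolution $\tilde Y \to Y$, one checks by means of Hodge-theoretic vanishing that this form has at most logarithmic poles along the preimage of the branch locus, and descending back to $\tilde X$ yields the required embedding. The input provided by the Du Bois hypothesis is that the relevant $R^i\pi_*$-sheaves on the resolution vanish, which is the standard cohomological characterization of Du Bois singularities in the spirit of Kov\'acs.

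The main obstacle, and the reason the argument requires $\sA$ to be invertible rather than merely reflexive or \Q-Cartier, is that the cyclic cover trick only produces a geometrically meaningful cover when $\sA$ is Cartier. In the log canonical setting of \cite[Thm.~7.2]{GKKP11} one could use index-one covers for any \Q-Cartier $\sA$ and appeal to the stability of log canonicity under such covers; in the Du Bois setting this option is unavailable, because Du Bois singularities need not behave well under arbitrary index-one covers of non-Cartier divisors. The counterexample in Example \ref{exm:DB no BS van} confirms that this limitation is not merely technical: the statement genuinely fails as soon as $\sA$ is allowed to be \Q-Cartier but not Cartier.
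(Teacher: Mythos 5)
The paper states this theorem explicitly without proof (``\ldots which we state without proof for completeness' sake''), offering only the pointer that it follows from ``a close examination of the proof of \cite[Thm.~16.1]{GKKP11}''; so there is no argument in the paper to compare yours against line by line. Your outer reduction is certainly the intended one and is correct as far as it goes: for invertible $\sA$ one has $\pi^*(\sA^{[n]}) = (\pi^*\sA)^{\otimes n}$ and hence $\kappa(\pi^*\sA) = \kappa(\sA)$, so once an embedding $\pi^*\sA \inj \Omega^p_{\tilde X}(\log E)$ is produced, Theorem \ref{thm:BS van} finishes the job. (This equality of Kodaira dimensions is the elementary place where invertibility is indispensable; for a general Weil divisorial sheaf it can fail, cf.\ the quadric-cone footnote in Section \ref{subsec:outline of prf}.)

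The gap is that the embedding $\pi^*\sA \inj \Omega^p_{\tilde X}(\log E)$ \emph{is} the theorem: it is an extension statement for reflexive $p$-forms on a Du Bois space, and your proposal does not establish it. Your sketch conflates two different cyclic-cover arguments: taking an $n$-th root of a section of $\sA^{[n]}$ on a branched cover and viewing the root as a $p$-form via $\gamma^*\sA \subset \Omega_Y^{[p]}$ is the Esnault--Viehweg-style proof of Theorem \ref{thm:BS van} itself, not the mechanism of \cite[Thm.~16.1]{GKKP11}, whose proof proceeds via residue sequences, a relative minimal model program over $X$, and Steenbrink-type vanishing rather than via cyclic covers. (Index-one covers enter \cite{GKKP11} only in the reduction from \Q-Cartier to Cartier sheaves, which is precisely the step one must avoid here.) The actual work behind ``a close examination'' is to isolate which ingredients of the extension-theorem proof use only the cohomological consequences of the Du Bois property and which use the log canonical structure (dlt models, discrepancy bookkeeping, the MMP), and to verify that the former suffice when the form lies in an invertible subsheaf; the blanket assertion that ``the relevant $R^i\pi_*$-sheaves vanish'' does not engage with this. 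If instead you intend the Esnault--Viehweg route on a cover $Y \to X$, you would additionally have to justify the Hodge-theoretic input on a resolution of $Y$, and $Y$ --- a cover branched in codimension one --- need not remain Du Bois. Either way, the crucial step is asserted rather than proved.
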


Many of the proofs presented in \cite{GKKP11} rely on taking index one covers. The reason that Theorem \ref{thm:DB BS van Cartier} fails for \Q-Cartier sheaves is that the Du Bois property is not preserved by taking finite covers \'etale in codimension one.
}{
In this section, we show that the assumptions of our main result, Theorem \ref{thm:lc BS van}, cannot be weakened. To be more precise, in Example \ref{exm:DB no BS van} we will exhibit normal projective \Q-Gorenstein threefolds $X$ that have Du Bois singularities (but are not log canonical), such that $\Omega_X^{[2]}$ admits a \Q-ample \Q-Cartier subsheaf.

Since the proof of the Extension Theorem \cite[Thm.~16.1]{GKKP11} heavily relies on vanishing theorems for spaces with Du Bois singularities, it is natural to ask whether Theorem~\ref{thm:lc BS van} also carries over to this more general setting. The following example shows that this is not the case.
Philosophically speaking, this tells us that Theorem~\ref{thm:lc BS van} is not a consequence of the closedness of holomorphic forms alone.

\begin{exm} \label{exm:DB no BS van}
Let $S$ be a smooth projective surface such that $K_S$ is ample, but the Hodge numbers $h^{0,1}(S)$ and $h^{0,2}(S)$ are zero. For example, let $S$ be any Catanese surface \cite{Cat81}. Choose a number $n \gg 0$ such that $nK_S$ is very ample and defines a projectively normal embedding $S \inj \P^N$. Let $X \subset \P^{N+1}$ be the projective cone over $S$ with respect to this embedding, with vertex $P \in X$. Then $X$ is normal, because the embedding of $S$ is projectively normal. It is not difficult to check that $K_X$ is \Q-Cartier.
Moreover, a Leray spectral sequence computation, using Kodaira vanishing and the assumption on the Hodge numbers of $S$, shows that $X$ has rational singularities. In particular, $X$ is Du Bois \cite[Cor.~2.6]{Kov99}. On the other hand, an elementary discrepancy calculation shows that $(X, \emptyset)$ is not log canonical.

In order to see that $X$ fails Bogomolov--Sommese vanishing, let $\iota\!: X \minus \{ P \} \inj X$ be the inclusion of the smooth locus, and let $p\!: X \minus \{ P \} \to S$ be the natural projection map. Then
\[ \sA := \iota_*(p^* \omega_S) \subset \Omega_X^{[2]} \]
is a \Q-ample \Q-Cartier subsheaf, because its $n$-th reflexive power $\sA^{[n]}$ is exactly the hyperplane bundle associated to the given embedding $X \subset \P^{N+1}$. This ends the construction of our counterexample.
\end{exm}
}

\PreprintAndPublication{
\part{COROLLARIES} \label{part:IV}
}{
\part{A COROLLARY} \label{part:IV}
}

\PreprintAndPublication{
\section{A remark on a conjecture of Campana} \label{sec:rem on kappa++}

In \cite{Cam10}, Campana made the following definition and conjecture.

\begin{dfn}[Augmented Kodaira dimension, see {\cite[Def.~2.6]{Cam10}}]
Let $X$ be a projective manifold. Then the \emph{augmented Kodaira dimension} of $X$ is
\[ \kappa_{++}(X) := \max \bigl\{ \kappa(\sL) \;\big|\; \sL \text{ an invertible subsheaf of $\Omega_X^p$, for some $p > 0$} \bigr\}. \]
\end{dfn}

Obviously, $\kappa(X) := \kappa(K_X) \le \kappa_{++}(X) \le \dim X$.

\begin{conj}[see {\cite[Conj.~2.7]{Cam10}}] \label{conj:kappa++}
Let $X$ be a projective manifold with $\kappa(X) \ge 0$. Then $\kappa_{++}(X) = \kappa(X)$.
\end{conj}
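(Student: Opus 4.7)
The inequality $\kappa(X)\le\kappa_{++}(X)$ is immediate by taking $\sL=\omega_X\subset\Omega_X^n$, so the substance is the reverse inequality. Fix an invertible subsheaf $\sL\subset\Omega_X^p$ with $k:=\kappa(\sL)\ge 0$; by Theorem~\ref{thm:BS van} we have $k\le p$. The plan is to prove $k\le\kappa(X)$ by combining a descent along the Iitaka fibration of $\sL$ with subadditivity of Kodaira dimensions in a fibration.

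First, I would replace $X$ by a smooth birational model on which the Iitaka fibration $\phi\!: X\to Y$ of $\sL$ is an honest morphism, with $\dim Y=k$. The generic fiber $F$ then satisfies $\kappa(\sL|_F)=0$ by construction. The filtration of $\Omega_X^p$ induced by the relative differential sequence of Section~\ref{subsec:rel diff sm} places the saturation of $\sL$ inside some factor of the form $\phi^*\Omega_Y^{p-q}\otimes\Omega_{X/Y}^q$, for a unique $q\in\{0,\ldots,p\}$. The key rigidity statement I would try to establish is that $q=0$, i.e.~that $\sL$ comes, generically, from a line bundle on $Y$. Granting this, pushing forward produces a big subsheaf of $\omega_Y=\Omega_Y^k$, which by the classical smooth Bogomolov--Sommese theorem (or Corollary~\ref{cor:easy lc kappa++ intro} after an appropriate compactification) forces $Y$ to be of general type, i.e.~$\kappa(Y)=k$.

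Second, with $\kappa(Y)=\dim Y=k$ in hand, I would apply Viehweg's additivity theorem for algebraic fibrations over a base of general type to obtain
\[ \kappa(X)\;\ge\;\kappa(Y)+\kappa(F)\;=\;k+\kappa(F), \]
and conclude $\kappa(X)\ge k$ once the non-vanishing $\kappa(F)\ge 0$ is secured.

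Two serious obstacles stand in the way. The rigidity step forcing $q=0$ is delicate: a priori $\sL|_F$ could embed into $\Omega_F^q$ with $0<q\le\min(p,\dim F)$, and one would need to show that such an embedding, combined with maximality of $\phi$ among Iitaka--type fibrations of $\sL$, contradicts $\dim Y=k$. A natural attempt is induction on $\dim X$, descending along a second fibration on $F$, but the induction looks circular unless one couples it with orbifold techniques in the spirit of Sections~\ref{sec:C-pairs}--\ref{sec:dlt adjunction}, so that a positive $q$ is traded for an enlargement of the boundary. The truly main obstacle, however, is the non-vanishing statement $\kappa(F)\ge 0$: it is part of the abundance conjecture and is known only under strong hypotheses (e.g.~$\dim F\le 3$). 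Consequently, the scheme above would unconditionally settle the conjecture whenever $\dim X-k\le 3$, and in general reduce Campana's Conjecture~\ref{conj:kappa++} to two well-known open problems in birational geometry.
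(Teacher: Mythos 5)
This statement is a \emph{conjecture} in the paper (Campana's \cite[Conj.~2.7]{Cam10}); the paper does not prove it and offers no argument beyond observing that the special case $\kappa_{++}(X)=\dim X$ (forcing $X$ to be of general type) follows at once from Theorem~\ref{thm:BS van}, with Corollary~\ref{cor:easy lc kappa++} as the log canonical analogue. So there is no proof in the paper to compare against, and your proposal, as you yourself concede, is a reduction scheme rather than a proof. The decisive gap is the ``rigidity'' step $q=0$, and I want to stress that this step is not merely unproven but false as stated. Take $X=C\times F$ with $C$ a curve of general type, and let $\sL=p_1^*\omega_C\otimes p_2^*\sM$ for an invertible $\sM\subset\Omega_F^{p-1}$ with $\kappa(\sM)=0$ and $p\ge 2$. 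Then $\kappa(\sL)=1$, the Iitaka fibration of $\sL$ is $p_1\!:X\to C$, yet $\sL$ sits in the graded piece $\phi^*\Omega_C^1\otimes\Omega_{X/C}^{p-1}$, so $q=p-1>0$ and $\sL$ does not come from the base. Any viable strategy must therefore handle $q>0$, e.g.\ by restricting to the fiber, showing $\kappa(\sL|_F)=0$, and setting up an induction --- which is essentially the open content of the conjecture.

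Two smaller remarks. First, your ``truly main obstacle,'' the non-vanishing $\kappa(F)\ge 0$, is in fact not an obstacle here: Iitaka's easy addition gives $\kappa(X)\le\kappa(F)+\dim Y$, so $\kappa(F)=-\infty$ would force $\kappa(X)=-\infty$, contradicting the hypothesis $\kappa(X)\ge 0$. No abundance is needed, and consequently your claim that the scheme ``unconditionally settles the conjecture whenever $\dim X-k\le 3$'' is misdirected --- the bottleneck is the rigidity/induction step, which your argument does not establish in any dimension. Second, granting $q=0$, the passage from a big subsheaf of $\phi^*\omega_Y$ to bigness of $\omega_Y$ is fine (one gets $p=k$ by combining $k\le p$ from Theorem~\ref{thm:BS van} with $p\le\dim Y$), and Viehweg's additivity over a base of general type is indeed a theorem; but these correct ingredients do not repair the central flaw.
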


It follows immediately from classical Bogomolov--Sommese vanishing (Theorem \ref{thm:BS van}) that if $\kappa_{++}(X) = \dim X$, then $X$ is of general type. This is a special case of Conjecture \ref{conj:kappa++}. Here we would like to remark that by the same argument, Theorem \ref{thm:lc BS van} leads to the following corollary.

\begin{cor} \label{cor:easy lc kappa++}
Let $(X, D)$ be a projective log canonical pair. Set
\begin{align*}
\kappa_{++}(X, D) := \max \bigl\{ \kappa(\sA) \;\big|\; \sA \text{ a Weil divisorial subsheaf of $\Omega_X^{[p]}(\log \rd D.)$, \phantom{.}} & \\
\text{for some $p > 0$} \bigr\}. &
\end{align*}
If $\kappa_{++}(X, D) = \dim X$, then $(X, D)$ is of log general type. \qed
\end{cor}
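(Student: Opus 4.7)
The plan is to apply Corollary \ref{cor:lc BS van} and then identify reflexive top-degree logarithmic differentials with the log canonical sheaf, imitating the classical deduction that $\kappa_{++}(X) = \dim X$ implies general type.

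Suppose $\kappa_{++}(X, D) = n := \dim X$. By definition there exist some integer $p > 0$ and a Weil divisorial subsheaf $\sA \subset \Omega_X^{[p]}(\log \rd D.)$ with $\kappa(\sA) = n$. Corollary \ref{cor:lc BS van} immediately gives $n = \kappa(\sA) \le p$; since $\Omega_X^{[p]} = 0$ for $p > n$ on an $n$-dimensional variety, this forces $p = n$. The first step of the proof is then to identify $\Omega_X^{[n]}(\log \rd D.)$ with the reflexive sheaf $\O_X(K_X + \rd D.)$. On the snc locus of $(X, \rd D.)$, which is a big open subset $U \subset X$ since $X$ is normal and $\rd D.$ is reduced, both sheaves agree with the line bundle $\omega_U(\rd D.|_U)$. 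Pushing forward from $U$ (equivalently, taking double duals) yields the desired global identification.

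It then follows that $\sA \hookrightarrow \O_X(K_X + \rd D.)$. Restricting to the smooth locus $X_\sm$ (which is big), this inclusion becomes an inclusion of line bundles, so its $m$-th tensor powers are inclusions of line bundles on $X_\sm$; taking reflexive hulls yields $\sA^{[m]} \subset \O_X(m K_X + m \rd D.)$ for every $m \ge 1$. Comparing spaces of global sections gives $\kappa(K_X + \rd D.) \ge \kappa(\sA) = n$, so $K_X + \rd D.$ is big. Adding the effective fractional part $D - \rd D. = \{D\}$ preserves bigness, hence $K_X + D$ is big and $(X, D)$ is of log general type.

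Since the deduction is a direct application of Corollary \ref{cor:lc BS van} together with a reflexive-sheaf bookkeeping, there is no serious obstacle. The only point to treat carefully is the compatibility of reflexive tensor powers with inclusions of Weil divisorial subsheaves, which reduces to the standard fact that double-dualization commutes with restriction to a big open set on which the relevant sheaves are locally free.
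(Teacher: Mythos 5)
Your argument is correct and is exactly the one the paper intends (the corollary is stated with a \qed, the text pointing out that it follows from Corollary \ref{cor:lc BS van} "by the same argument" as in the classical smooth case): Bogomolov--Sommese forces $p = n$, the identification $\Omega_X^{[n]}(\log \rd D.) \isom \O_X(K_X + \rd D.)$ over a big open set gives bigness of $K_X + \rd D.$, and adding the effective fractional part preserves bigness. The reflexive-sheaf bookkeeping you flag is handled correctly and matches the paper's standing conventions (Lemma \ref{lem:ext morphisms}).
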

}{}

\section{A Kodaira--Akizuki--Nakano-type vanishing result} \label{sec:Serre dual}

In \cite[Proposition 4.5.2]{GKP12}, the authors obtained a Kodaira--Akizuki--Nakano-type vanishing result for reflexive differentials twisted by an ample line bundle. Arguing along the same lines, we show that this vanishing also holds for twists by Weil divisorial sheaves.
\PreprintAndPublication{
Furthermore, we observe that the sheaves of reflexive differentials may be replaced by K\"ahler differentials.
}{
Furthermore, we observe that the sheaves of reflexive differentials may be replaced by K\"ahler differentials for dimension reasons.
}

\begin{cor}[KAN-type vanishing for top cohomology] \label{cor:Serre dual lc BS van}
Let $(X, D)$ be a complex projective log canonical pair of dimension $n$ and $\sA$ a Weil divisorial sheaf on $X$. Then
\[ H^n \big( X, (\Omega_X^{[p]}(\log \rd D.) \tensor \sA)^{**} \big) = 0 \]
for $p > n - \kappa(\sA)$.
\end{cor}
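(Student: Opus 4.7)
The plan is to argue by contradiction, reducing the top-cohomology vanishing to an instance of Corollary \ref{cor:lc BS van}. Assume that $p > n - \kappa(\sA)$ (so in particular $\kappa(\sA) \ge 0$) and that $\sF := (\Omega_X^{[p]}(\log \rd D.) \tensor \sA)^{**}$ satisfies $H^n(X, \sF) \ne 0$. First I would invoke Serre--Grothendieck duality on the normal projective variety $X$ to identify $H^n(X, \sF)^*$ with $\Hom(\sF, \omega_X)$ (see the comment on obstacles below), obtaining a nonzero morphism $\sF \to \omega_X$. By reflexivity of $\omega_X$ and Lemma \ref{lem:ext morphisms}, this factors through a nonzero map $\Omega_X^{[p]}(\log \rd D.) \tensor \sA \to \omega_X$.

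Next I would exploit the wedge-product duality of reflexive logarithmic differentials. On the big open set $U \subset X$ where $(X, \rd D.)$ is snc, the wedge pairing
\[
\Omega_U^p(\log \rd D.) \tensor \Omega_U^{n-p}(\log \rd D.) \to \omega_U(\rd D.)
\]
is perfect, giving $\sHom(\Omega_U^p(\log \rd D.), \omega_U) \cong \Omega_U^{n-p}(\log \rd D.) \tensor \O_U(-\rd D.)$. Hom-tensor adjunction combined with the extension-of-morphisms Lemma \ref{lem:ext morphisms} then turns the map above into a nonzero, hence injective, morphism of Weil divisorial sheaves
\[
\sA' := \bigl( \sA \tensor \O_X(\rd D.) \bigr)^{**} \hookrightarrow \Omega_X^{[n-p]}(\log \rd D.).
\]
Applying Corollary \ref{cor:lc BS van} to this inclusion yields $\kappa(\sA') \le n-p$. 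Because $\rd D. \ge 0$, multiplication by the canonical global section $1 \in H^0(X, \O_X(\rd D.))$ produces a natural inclusion $\sA \hookrightarrow \sA'$ and, by taking reflexive tensor powers, $\sA^{[m]} \hookrightarrow (\sA')^{[m]}$ for every $m \ge 1$. Consequently $\kappa(\sA) \le \kappa(\sA') \le n-p$, contradicting the assumption $p > n - \kappa(\sA)$.

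The main obstacle is the Serre duality identification $H^n(X, \sF)^* \cong \Hom(\sF, \omega_X)$: since a log canonical variety need not be Cohen--Macaulay, one has to work with the full dualizing complex $\omega_X^\bullet$ instead of just $\omega_X$. The spectral sequence $E_2^{i,j} = \Ext^i(\sF, h^j(\omega_X^\bullet)) \Rightarrow \Ext^{i+j}(\sF, \omega_X^\bullet)$, however, has only the corner $(i,j) = (0, -n)$ contributing to total degree $-n$, because $\Ext^i$ vanishes for $i < 0$ and $\omega_X^\bullet$ has no cohomology in degrees below $-n$. Hence $\Ext^{-n}(\sF, \omega_X^\bullet) \cong \Hom(\sF, h^{-n}(\omega_X^\bullet)) = \Hom(\sF, \omega_X)$, which combined with Grothendieck duality $\Ext^{-n}(\sF, \omega_X^\bullet) \cong H^n(X, \sF)^*$ yields the desired identification, exactly as in the proof of \cite[Proposition 4.5.2]{GKP12}.
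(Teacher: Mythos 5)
Your proof is correct and follows essentially the same route as the paper: Serre--Grothendieck duality identifies the top cohomology with $\Hom\bigl(\sF, \omega_X\bigr)$, the wedge-product pairing of reflexive logarithmic forms converts a nonzero such homomorphism into a Weil divisorial subsheaf of $\Omega_X^{[n-p]}(\log \rd D.)$, and Corollary \ref{cor:lc BS van} rules this out. The only cosmetic difference is that you carry the twist by $\O_X(\rd D.)$ through the pairing and then compare $\kappa(\sA)$ with $\kappa(\sA')$, whereas the paper absorbs it from the start via the inclusion $\omega_X \subset \Omega_X^{[n]}(\log \rd D.)$; your careful justification of the duality $H^n(X,\sF)^* \isom \Hom(\sF,\omega_X)$ without Cohen--Macaulay hypotheses is exactly what the paper's citation of \cite[Prop.~5.75]{KM98} supplies.
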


\begin{proof}
By Theorem \ref{thm:lc BS van},
\[ \Hom \big( \sA, \Omega_X^{[q]}(\log \rd D.) \big) = 0 \]
for $q < \kappa(\sA)$. So
\begin{align*}
H^n \big( X, (\Omega_X^{[p]}(\log \rd D.) \tensor \sA)^{**} \big)^* &\isom
\Hom \big( (\Omega_X^{[p]}(\log \rd D.) \tensor \sA)^{**}, \omega_X \big) \\
&\subset \Hom \big( (\Omega_X^{[p]}(\log \rd D.) \tensor \sA)^{**}, \Omega_X^{[n]}(\log \rd D.) \big) \\
&\isom \Hom \big( \sA, \Omega_X^{[n-p]}(\log \rd D.) \big) \\
&= 0.
\end{align*}
The first isomorphism exists because $\omega_X$ is the dualizing sheaf \cite[Def.~5.66 and Prop.~5.75]{KM98}. The second isomorphism comes from the pairing associated to the wedge product of differential forms.
\end{proof}

\PreprintAndPublication{
For dimension reasons, Corollary \ref{cor:Serre dual lc BS van} remains true if we drop all the double duals.
}{}

\begin{cor}[KAN-type vanishing for K\"ahler differentials] \label{cor:Serre dual lc BS van II}
Let $(X, D)$ be a complex projective log canonical pair of dimension $n$ and $\sA$ a Weil divisorial sheaf on $X$. Then
\[ H^n \big( X, \Omega_X^p(\log \rd D.) \tensor \sA \big) = 0 \]
for $p > n - \kappa(\sA)$.
\end{cor}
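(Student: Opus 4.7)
The plan is to deduce Corollary \ref{cor:Serre dual lc BS van II} from its reflexive counterpart, Corollary \ref{cor:Serre dual lc BS van}, by a standard dimensional argument. Set $\sF := \Omega_X^p(\log \rd D.) \tensor \sA$, so $\sF^{**}$ is the sheaf appearing in Corollary \ref{cor:Serre dual lc BS van}. Denote the kernel and cokernel of the canonical morphism $\phi\!: \sF \to \sF^{**}$ by $\sK$ and $\sC$ respectively. The goal is to show that both $\sK$ and $\sC$ are supported in codimension at least two, from which the vanishing of $H^n(X, \sF)$ will follow by chasing long exact sequences.

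The key geometric input is the existence of a big open subset $U \subset X$ on which $\sF$ is locally free, and hence reflexive. Since $(X, D)$ is log canonical, the boundary $D$ has no coefficient strictly greater than one, so $\rd D.$ is reduced. It is then standard that the non-snc locus of $(X, \rd D.)$ has codimension at least two, as at every codimension-one point of $X$ the variety is smooth (by normality) and $\rd D.$ is cut out by a uniformizer. Setting $U := (X, \rd D.)_\snc$, the restriction $\Omega_X^1(\log \rd D.)|_U$ is locally free, and so is its $p$-th exterior power. Moreover, $\sA|_U$ is a line bundle since $U \subset X_\sm$ and $\sA$ is a reflexive rank-one sheaf. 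Consequently $\sF|_U$ is locally free, $\phi|_U$ is an isomorphism, and both $\sK$ and $\sC$ are supported on $X \minus U$, a closed subset of dimension at most $n-2$.

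Grothendieck's vanishing theorem now yields $H^i(X, \sK) = H^i(X, \sC) = 0$ for all $i \ge n-1$. Splitting the four-term exact sequence $0 \to \sK \to \sF \to \sF^{**} \to \sC \to 0$ into the short exact sequences $0 \to \sK \to \sF \to \sF/\sK \to 0$ and $0 \to \sF/\sK \to \sF^{**} \to \sC \to 0$, the associated long exact cohomology sequences give isomorphisms $H^n(X, \sF) \cong H^n(X, \sF/\sK) \cong H^n(X, \sF^{**})$. The right-hand side vanishes by Corollary \ref{cor:Serre dual lc BS van} once $p > n - \kappa(\sA)$, so we conclude. There is no real obstacle here; the argument is a short exercise in comparing a coherent sheaf with its reflexive hull, applied on top of the previous corollary.
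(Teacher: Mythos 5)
Your proof is correct and follows essentially the same route as the paper: both compare $\Omega_X^p(\log \rd D.) \tensor \sA$ with its reflexive hull via the four-term exact sequence, observe that kernel and cokernel are supported in codimension $\ge 2$ (the paper phrases this as $\alpha$ being an isomorphism on the snc locus of $(X, \rd D.)$), and conclude by Grothendieck vanishing together with Corollary \ref{cor:Serre dual lc BS van}.
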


\PreprintAndPublication{
\begin{proof}
We have a natural map
\[ \alpha\!: \Omega_X^p(\log \rd D.) \tensor \sA \to \big(\Omega_X^{[p]}(\log \rd D.) \tensor \sA\big)^{**}, \]
defined by the composition
\[ \xymatrix{
\Omega_X^p(\log \rd D.) \tensor \sA \ar[r]^-{r_1 \tensor \id} & \Omega_X^{[p]}(\log \rd D.) \tensor \sA \ar[r]^-{r_2} & \big(\Omega_X^{[p]}(\log \rd D.) \tensor \sA\big)^{**}.
} \]
Here $r_1$ and $r_2$ denote the maps to the double duals of the appropriate sheaves.

The map $\alpha$ fits into the four-term exact sequence
\begin{sequation} \label{seq:4-term}
0 \to \sT \to \Omega_X^p(\log \rd D.) \tensor \sA \stackrel\alpha\to \big(\Omega_X^{[p]}(\log \rd D.) \tensor \sA\big)^{**} \to \mc Q \to 0,
\end{sequation}%
where $\sT := \ker(\alpha)$ and $\sQ := \coker(\alpha)$. Since $\alpha$ is an isomorphism on the snc locus of $(X, \rd D.)$, the sheaves $\sT$ and $\mc Q$ are supported on a set of codimension $\ge 2$. In particular,
\begin{sequation} \label{seq:T, Q van}
\phantom{\text{$i \ge n - 1$}}
H^i(X, \sT) = H^i(X, \mc Q) = 0 \text{\qquad for $i \ge n - 1$}
\end{sequation}%
by dimension reasons. Now chop up sequence (\ref{seq:4-term}) into the short exact sequences
\begin{sequation} \label{seq:ses 1}
0 \to \sT \to \Omega_X^p(\log \rd D.) \tensor \sA \stackrel\alpha\to \img(\alpha) \to 0
\end{sequation}%
and
\begin{sequation} \label{seq:ses 2}
0 \to \img(\alpha) \to \big(\Omega_X^{[p]}(\log \rd D.) \tensor \sA\big)^{**} \to \mc Q \to 0.
\end{sequation}%
By the long exact sequence associated to (\ref{seq:ses 2}),
\[ \underbrace{H^{n-1}(X, \mc Q)}_{\text{$= 0$ by (\ref{seq:T, Q van})}} \to H^n\big(X, \img(\alpha)\big) \to \underbrace{H^n \big(X, (\Omega_X^{[p]}(\log \rd D.) \tensor \sA)^{**} \big)}_{\text{$= 0$ by Corollary \ref{cor:Serre dual lc BS van}}}, \]
we get $H^n\big(X, \img(\alpha)\big) = 0$. Then by the long exact sequence associated to (\ref{seq:ses 1}),
\[ \underbrace{H^n(X, \sT)}_{\text{$= 0$ by (\ref{seq:T, Q van})}} \to H^n\big(X, \Omega_X^p(\log \rd D.) \tensor \sA\big) \to \underbrace{H^n\big(X, \img(\alpha)\big)}_{\text{$= 0$}}, \]
we get $H^n\bigl(X, \Omega_X^p(\log \rd D.) \tensor \sA\bigr) = 0$, as desired.
\end{proof}
}{
\begin{proof}
We have a natural map
\[ \alpha\!: \Omega_X^p(\log \rd D.) \tensor \sA \to \big(\Omega_X^{[p]}(\log \rd D.) \tensor \sA\big)^{**}, \]
defined by the composition
\[ \xymatrix{
\Omega_X^p(\log \rd D.) \tensor \sA \ar[r]^-{r_1 \tensor \id} & \Omega_X^{[p]}(\log \rd D.) \tensor \sA \ar[r]^-{r_2} & \big(\Omega_X^{[p]}(\log \rd D.) \tensor \sA\big)^{**}.
} \]
Here $r_1$ and $r_2$ denote the maps to the double duals of the appropriate sheaves.

The map $\alpha$ fits into the four-term exact sequence
\begin{sequation} \label{seq:4-term}
0 \to \sT \to \Omega_X^p(\log \rd D.) \tensor \sA \stackrel\alpha\to \big(\Omega_X^{[p]}(\log \rd D.) \tensor \sA\big)^{**} \to \mc Q \to 0,
\end{sequation}%
where $\sT := \ker(\alpha)$ and $\sQ := \coker(\alpha)$. Since $\alpha$ is an isomorphism on the snc locus of $(X, \rd D.)$, the sheaves $\sT$ and $\mc Q$ are supported on a set of codimension $\ge 2$. In particular,
\begin{sequation} \label{seq:T, Q van}
\phantom{\text{$i \ge n - 1$}}
H^i(X, \sT) = H^i(X, \mc Q) = 0 \text{\qquad for $i \ge n - 1$}
\end{sequation}%
by dimension reasons. Chopping up sequence (\ref{seq:4-term}) into two short exact sequences and looking at the associated long exact sequences, \eqref{seq:T, Q van} and Corollary \ref{cor:Serre dual lc BS van} yield the desired vanishing.
\end{proof}
}

\newcommand{\etalchar}[1]{$^{#1}$}


\begin{thebibliography}{BCHM10}

\bibitem[BCHM10]{BCHM10}
Caucher Birkar, Paolo Cascini, Christopher~D. Hacon, and James McKernan.
\newblock Existence of minimal models for varieties of log general type.
\newblock {\em Journal of the AMS}, 23:405--468, 2010.

\PreprintAndPublication{
\bibitem[BHK{\etalchar{+}}11]{BHK+11}
Thomas Bauer, Brian Harbourne, Andreas~Leopold Knutsen, Alex K{\"u}ronya,
  Stefan M{\"u}ller-Stach, and Tomasz Szemberg.
\newblock Negative curves on algebraic surfaces.
\newblock \href{http://arxiv.org/abs/1109.1881}{\texttt{arXiv:1109.1881v2
  [math.AG]}}, version 2, September 2011.
}{}

\bibitem[Bog79]{Bog79}
Fedor~A. Bogomolov.
\newblock Holomorphic tensors and vector bundles on projective varieties.
\newblock {\em Math. USSR Izvestija}, 13:499–--555, 1979.

\bibitem[Cam04]{Cam04}
Fr{\'e}d{\'e}ric Campana.
\newblock Orbifolds, special varieties and classification theory.
\newblock {\em Ann. Inst. Fourier Grenoble}, 54:499–--630, 2004.

\bibitem[Cam10]{Cam10}
Fr{\'e}d{\'e}ric Campana.
\newblock {Special orbifolds and birational classification: a survey}.
\newblock \href{http://arxiv.org/abs/1001.3763}{\texttt{arXiv:\ 1001.3763
  [math.AG]}}, January 2010.

\bibitem[Cat81]{Cat81}
Fabrizio Catanese.
\newblock {Babbage's Conjecture, Contact of Surfaces, Symmetric Determinantal
  Varieties and Applications}.
\newblock {\em Invent.~Math.}, 63(3):433--465, 1981.

\PreprintAndPublication{
\bibitem[DM93]{DM93}
Pierre Deligne and G.~Daniel Mostow.
\newblock {\em Commensurabilities among lattices in $PU(1, n)$}, volume 132 of
  {\em Ann.~Math.~Studies}.
\newblock Princeton University Press, 1993.
}{}

\bibitem[EV92]{EV92}
H{\'e}l{\`e}ne Esnault and Eckart Viehweg.
\newblock {\em Lectures on Vanishing Theorems}, volume~20 of {\em DMV Seminar}.
\newblock Birkh{\"a}user Verlag, Basel, 1992.

\bibitem[GKK10]{GKK10}
Daniel Greb, Stefan Kebekus, and S{\'a}ndor~J. Kov{\'a}cs.
\newblock Extension theorems for differential forms and {B}ogomolov--{S}ommese
  vanishing on log canonical varieties.
\newblock {\em Compositio Math.}, 146:193--219, 2010.

\bibitem[GKKP11]{GKKP11}
Daniel Greb, Stefan Kebekus, S{\'a}ndor~J. Kov{\'a}cs, and Thomas Peternell.
\newblock Differential forms on log canonical spaces.
\newblock {\em Publications Math{\'e}matiques de L'IH{\'E}S}, 114:1--83, 2011.

\bibitem[GKP12]{GKP12}
Daniel Greb, Stefan Kebekus, and Thomas Peternell.
\newblock {Reflexive differential forms on singular spaces -- Geometry and
  Cohomology}.
\newblock \href{http://arXiv.org/abs/1202.3243}{\texttt{arXiv:1202.3243
  [math.AG]}}, February 2012. To appear in Crelle's Journal.

\bibitem[Har77]{Har77}
Robin Hartshorne.
\newblock {\em Algebraic Geometry}, volume~52 of {\em Graduate Texts in
  Mathematics}.
\newblock Springer-Verlag, New York, 1977.

\PreprintAndPublication{
\bibitem[HK04]{HK04}
Brendan Hassett and S{\'a}ndor~J. Kov{\'a}cs.
\newblock Reflexive pull-backs and base extension.
\newblock {\em J.~Alg.~Geom.}, 13(2):233--247, 2004.
}{}

\PreprintAndPublication{
\bibitem[HK10]{HK10}
Christopher~D. Hacon and S{\'a}ndor~J. Kov{\'a}cs.
\newblock {\em {Classification of Higher Dimensional Algebraic Varieties}}.
\newblock Oberwolfach Seminars. Birkh{\"a}user, 2010.
}{}

\bibitem[JK11]{JK11}
Kelly Jabbusch and Stefan Kebekus.
\newblock Families over special base manifolds and a conjecture of {C}ampana.
\newblock {\em Mathematische Zeitschrift}, 269(3):847--878, 2011.

\bibitem[K{\etalchar{+}}92]{Kol92}
J{\'a}nos Koll{\'a}r et~al.
\newblock {\em Flips and Abundance for Algebraic Threefolds}, volume 211 of
  {\em Ast{\'e}risque}.
\newblock Soci{\'e}t{\'e} Math{\'e}matique de France, Paris, 1992.

\bibitem[Keb11]{Keb11}
Stefan Kebekus.
\newblock Differential forms on singular spaces, the minimal model program, and
  hyperbolicity of moduli stacks.
\newblock \href{http://arXiv.org/abs/1107.4239}{\texttt{arXiv:1107.4239v2
  [math.AG]}}, version 2, December 2011. To appear in the ``Handbook of Moduli, in honour of David Mumford'', to be published by International Press, editors Gavril Farkas and Ian Morrison.

\bibitem[KK10a]{KK10b}
Stefan Kebekus and S{\'a}ndor~J. Kov{\'a}cs.
\newblock The structure of surfaces and threefolds mapping to the moduli stack
  of canonically polarized manifolds.
\newblock {\em Duke Math.~J.}, 155(1):1--33, 2010.

\bibitem[KK10b]{KK10}
J{\'a}nos Koll{\'a}r and S{\'a}ndor~J. Kov{\'a}cs.
\newblock Log canonical singularities are {D}u {B}ois.
\newblock {\em J.~Amer.~Math.~Soc.}, 23:791--813, 2010.

\bibitem[KM98]{KM98}
J{\'a}nos Koll{\'a}r and Shigefumi Mori.
\newblock {\em Birational Geometry of Algebraic Varieties}, volume 134 of {\em
  Cambridge Tracts in Mathematics}.
\newblock Cambridge University Press, Cambridge, 1998.

\PreprintAndPublication{
\bibitem[Kol95]{Kol95}
J{\'a}nos Koll{\'a}r.
\newblock Nonrational hypersurfaces.
\newblock {\em J.~Amer.~Math.~Soc.}, 8(1):241--249, 1995.
}{}

\bibitem[Kol11]{Kol11}
J{\'a}nos Koll{\'a}r.
\newblock Singularities of the {M}inimal {M}odel {P}rogram. {W}ith the
  collaboration of {S}{\'a}ndor {J}. {K}ov{\'a}cs.
\newblock Draft, 2011.

\bibitem[Kov99]{Kov99}
S{\'a}ndor~J. Kov{\'a}cs.
\newblock {Rational, Log Canonical, Du Bois Singularities: On the Conjectures
  of Koll{\'a}r and Steenbrink}.
\newblock {\em Compositio Math.}, 118:123--133, 1999.

\bibitem[KS09]{KS09}
S{\'a}ndor~J. Kov{\'a}cs and Karl Schwede.
\newblock Hodge {T}heory meets the {M}inimal {M}odel {P}rogram: {A} survey of
  log canonical and {D}u {B}ois singularities.
\newblock \href{http://arxiv.org/abs/0909.0993}{\texttt{arXiv:0909.0993v1
  [math.AG]}}, version 1, September 2009.

\PreprintAndPublication{
\bibitem[Lan01]{Lan01}
Adrian Langer.
\newblock The {B}ogomolov-{M}iyaoka-{Y}au inequality for log canonical
  surfaces.
\newblock {\em J.~London Math.~Soc.}, 64(2):327--343, 2001.
}{}

\PreprintAndPublication{
\bibitem[Lan03]{Lan03}
Adrian Langer.
\newblock Logarithmic orbifold {E}uler numbers of surfaces with applications.
\newblock {\em Proc.~London Math.~Soc.}, 86(3):358--396, 2003.
}{}

\PreprintAndPublication{
\bibitem[LM95]{LM95}
Steven Shin-Yi Lu and Yoichi Miyaoka.
\newblock {Bounding curves in algebraic surfaces by genus and Chern numbers}.
\newblock {\em Math.~Res.~Lett.}, 2:663--676, 1995.
}{}

\bibitem[Miy77]{Miy77}
Yoichi Miyaoka.
\newblock {On the Chern numbers of surfaces of general type}.
\newblock {\em Invent.~Math.}, 42:225--237, 1977.

\PreprintAndPublication{
\bibitem[Mou98]{Mou98}
Christophe Mourougane.
\newblock {Versions k{\"a}hl{\'e}riennes du th{\'e}or{\`e}me d'annulation de
  Bogomolov}.
\newblock {\em Collect.~Math.}, 49:433--445, 1998.
}{}

\bibitem[OSS80]{OSS80}
Christian Okonek, Michael Schneider, and Heinz Spindler.
\newblock {\em {Vector Bundles on Complex Projective Spaces}}, volume~3 of {\em
  Progress in Mathematics}.
\newblock Birkh{\"a}user, 1980.

\bibitem[Rei87]{Rei87}
Miles Reid.
\newblock {Young person's guide to canonical singularities}.
\newblock In {\em Proc.~Sympos.~Pure Math., Algebraic Geometry, Bowdoin 1985},
  volume~46, pages 345--416, 1987.

\PreprintAndPublication{
\bibitem[Sch10]{Sch10}
Karl Schwede.
\newblock Generalized divisors and reflexive sheaves.
\newblock Notes available online at
  \href{http://www.math.psu.edu/schwede/Notes/GeneralizedDivisors.pdf}{\texttt%
{www.math.psu.edu/schwede/Notes/GeneralizedDivisors.pdf}}, 2010.
}{}

\PreprintAndPublication{
\bibitem[SS85]{SS85}
Bernard Shiffman and Andrew~J. Sommese.
\newblock {\em Vanishing Theorems on Complex Manifolds}, volume~56 of {\em
  Progress in Mathematics}.
\newblock Birkh{\"a}user, 1985.
}{}

\bibitem[Voi02]{Voi02}
Claire Voisin.
\newblock {\em Hodge Theory and Complex Algebraic Geometry I}, volume~76 of
  {\em Cambridge Studies in Advanced Mathematics}.
\newblock Cambridge University Press, 2002.

\end{thebibliography}
\end{document}